\newcommand\xqed[1]{%
  \leavevmode\unskip\penalty9999 \hbox{}\nobreak\hfill
  \quad\hbox{#1}}
\newcommand*{\exend}{\xqed{$\triangle$}}
\newcommand*{\NN}{\ensuremath{\mathbb{N}}}
\newcommand*{\QQ}{\ensuremath{\mathbb{Q}}}
\newcommand*{\ZZ}{\ensuremath{\mathbb{Z}}}
\newcommand*{\RR}{\ensuremath{\mathbb{R}}}
\newcommand*{\CC}{\ensuremath{\mathbb{C}}}
\newcommand*{\FF}{\ensuremath{\mathbb{F}}}
\newcommand*{\PP}{\ensuremath{\mathcal{P}}}
\newcommand*{\LL}{\ensuremath{\mathcal{L}}}
\newcommand*{\indx}{\mathcal{I}}
\newcommand*{\iters}{\mathcal{Z}}
\newcommand*{\niters}{|\mathcal{Z}|}
\newcommand*{\from}{\colon}
\newcommand*{\ceq}{\coloneqq}
\newcommand*{\qec}{\eqqcolon}
\newcommand*{\eps}{\varepsilon}
\newcommand*{\set}[1]{\{{#1}\}}
\newcommand*{\dset}[1]{\{{#1}\}}
\newcommand*{\tup}[1]{({#1})}
\newcommand*{\dtup}[1]{({#1})}
\newcommand*{\norm}[1]{ \|  #1 \|}
\newcommand*{\indicator}{1}
\newcommand*{\plusequals}{\mathbin{\text{+=}}}
\newcommand*{\subim}{\le}
\newcommand*{\subpr}{<}
\newcommand{\lt}{\left}
\newcommand{\rt}{\right}
\newcommand*{\restr}[2]{#1\big|_{#2}}
\newcommand*{\barshbl}{{\bar{s}_\mathrm{HBL}}}
\newcommand*{\shbl}{{s_\mathrm{HBL}}}
\newcommand*{\shbla}[1]{s_{\mathrm{HBL}, #1 }}
\newcommand*{\Mat}[2]{M_{#1}({#2})}
\DeclareMathOperator{\opDim}{dim}
\newcommand*{\Dim}[1]{\opDim({#1})}
\DeclareMathOperator{\opRank}{rank}
\newcommand*{\Rank}[1]{\opRank({#1})}
\DeclareMathOperator{\opKernel}{ker}
\newcommand*{\Kernel}[1]{\opKernel({#1})}
\newcommand*{\ForStep}[4]{\ensuremath{\text{\bf{for} } #1=#2:#3:#4}}
\newcommand*{\For}[3]{\ensuremath{\text{\bf{for} } #1=#2:#3}}
\newcommand*{\If}[1]{\ensuremath{\text{\bf{if} } #1}}
\newcommand*{\sectn}[1]{Section~{#1}}
\newcommand*{\sectns}[1]{Sections~{#1}}
\newcommand*{\Sectn}[1]{Section~{#1}}
\newcommand*{\brk}{\allowbreak}
\newcommand{\ignore}[1]{}
\numberwithin{equation}{section}
\def\th@plain{%
  \thm@notefont{}
  \itshape 
}
\def\th@definition{%
  \thm@notefont{}
  \normalfont 
}
\theoremstyle{plain}
\newtheorem{theorem}{Theorem}[section]
\newtheorem{definition}[theorem]{Definition}
\newtheorem{notation}[theorem]{Notation}
\newtheorem{remark}[theorem]{Remark}
\newtheorem{lemma}[theorem]{Lemma}
\newtheorem{proposition}[theorem]{Proposition}
\theoremstyle{definition}
\newtheorem*{exMatmul}{Example: Matrix Multiplication}
\newtheorem*{exAk}{Example: Matrix Powering}
\newtheorem*{exNbody}{Example: $N$-Body Simulation and Database Join}
\newtheorem*{exMatvec}{Example: Matrix-Vector Multiplication}
\newtheorem*{exTensor}{Example: Tensor Contraction}
\newtheorem*{exComplicated}{Example: Complicated Code}
\title{%
Communication Lower Bounds and Optimal Algorithms for \\
Programs That Reference Arrays --- Part 1}
\author{%
Michael\,Christ\thanks{%
Mathematics Dept., Univ.\ of California, Berkeley ({\tt mchrist@math.berkeley.edu}).},%
\,James\,Demmel\thanks{%
Computer Science Div.\ and Mathematics Dept., Univ.\ of California, Berkeley ({\tt demmel@cs.berkeley.edu}).},%
\,Nicholas\,Knight\thanks{%
Computer Science Div., Univ.\ of California, Berkeley ({\tt knight@cs.berkeley.edu}).},%
\,Thomas\,Scanlon\thanks{%
Mathematics Dept., Univ.\ of California, Berkeley ({\tt scanlon@math.berkeley.edu}).},%
\,and\,Katherine\,Yelick\thanks{%
Computer Science Div., Univ.\ of California, Berkeley and Lawrence Berkeley Natl.\ Lab.\ ({\tt yelick@cs.berkeley.edu}).}}
\date{\today}
\begin{document}
\maketitle
\begin{abstract}
Communication, i.e., moving data, between levels of a memory hierarchy or between parallel processors on a network, can greatly dominate the cost of computation, so algorithms that minimize communication can run much faster (and use less energy) than algorithms that do not.
Motivated by this, attainable communication lower bounds were established in \cite{hongkung,ITT04,BallardDemmelHoltzSchwartz11} for a variety of algorithms including matrix computations.
The lower bound approach used initially in \cite{ITT04} for $\Theta(N^3)$ 
matrix multiplication, and later in \cite{BallardDemmelHoltzSchwartz11} 
for many other linear algebra algorithms, depended on a geometric result by 
Loomis and Whitney \cite{LW49}: this result bounded the volume of a 3D set 
(representing multiply-adds done in the inner loop of the algorithm) 
using the product of the areas of certain 2D projections of this set 
(representing the matrix entries available locally, i.e., without communication). 
Using a recent generalization of Loomis' and Whitney's result, we generalize 
this lower bound approach to a much larger class of algorithms, 
that may have arbitrary numbers of loops and arrays with arbitrary dimensions, as long as the index expressions are affine combinations of loop variables.
In other words, the algorithm can do arbitrary operations on any number of 
variables like $A(i_1,i_2,i_2-2i_1,3-4i_3+7i_4,\ldots)$.
Moreover, the result applies to recursive programs, irregular iteration spaces, sparse matrices, 
and other data structures as long as the computation can be logically 
mapped to loops and indexed data structure accesses. 
%
%
We also discuss when optimal algorithms exist that attain the lower bounds; 
this leads to new asymptotically faster algorithms for several problems.
\end{abstract}

\section{Introduction}
\label{sec:intro}
Algorithms have two costs: computation (e.g., arithmetic) and 
communication, i.e., moving data between levels of a memory hierarchy or between processors over a network. 
Communication costs (measured in time or energy per operation) already greatly exceed computation costs, 
and the gap is growing over time following technological trends \cite{FOSC,ComputerPerformanceNRC}.
Thus it is important to design algorithms that minimize communication, and if possible attain communication lower bounds.
In this work, we measure communication cost in terms of the number of words moved (a bandwidth cost), and will not discuss other factors like per-message latency, congestion, or costs associated
with noncontiguous data.
Our goal here is to establish new lower bounds on the communication cost of a much broader class of algorithms than possible before, and when possible describe how to attain these lower bounds.

Communication lower bounds have been a subject of research for a long time. 
Hong and Kung \cite{hongkung} used an approach called {\em pebbling} to establish lower bounds for $\Theta(N^3)$ matrix multiplication and other algorithms.
Irony, Tiskin and Toledo \cite{ITT04} proved the result for $\Theta(N^3)$ matrix multiplication in a different, geometric way, and extended the results both to the parallel case and the case of using redundant copies of the data. 
In \cite{BallardDemmelHoltzSchwartz11} this geometric approach was further generalized to include any algorithm that ``geometrically resembled'' matrix multiplication in a sense to be made clear later, but included most direct linear algebra algorithms (dense or sparse, sequential or parallel), and some graph algorithms as well. 
Of course lower bounds alone are not algorithms, so a great deal of additional work has gone into developing algorithms that attain these lower bounds, resulting in many faster algorithms as well as remaining open problems (we discuss attainability in \sectn{\ref{sec:attain}}).

Our geometric approach, following \cite{ITT04,BallardDemmelHoltzSchwartz11}, works as follows. 
We have a set $\iters$ of arithmetic operations to perform, and the amount of data available locally, i.e., without any communication, is $M$ words. 
For example, $M$ could be the cache size. 
Suppose we can upper bound the number of (useful) arithmetic operations that we can perform with just this data; call this bound $F$.
Letting $|\cdot|$ denote the cardinality of a set, if the total number of arithmetic operations that we need to perform is $\niters$ (e.g., $\niters=N^3$ multiply-adds in the case of dense matrix multiplication on one processor), then we need to refill the cache at least $\niters/F$ times in order to perform all the operations. 
Since refilling the cache has a communication cost of moving $O(M)$ words (e.g., writing at most $M$ words from cache back to slow memory, and reading at most $M$ new words into cache from slow memory), the total communication cost is $\Omega(M\cdot\niters/F)$ words moved.
This argument is formalized in \sectn{\ref{sec:lb}}. 

The most challenging part of this argument is determining $F$.
Our approach, described in more detail in \sectns{\ref{sec:model}--\ref{sec:hbl}},  
builds on the work in \cite{ITT04,BallardDemmelHoltzSchwartz11}; in those papers, the algorithm is modeled geometrically using the iteration space of the loop nest, as sketched in the following example.
\begin{exMatmul}[Part~1/5\footnote{This indicates that this is the first of 5 times that matrix multiplication will be used as an example.}]
For $N$--by--$N$ matrix multiplication, with 3 nested loops over indices $(i_1,i_2,i_3)$, any subset $E$ of the $N^3$ inner loop iterations $C(i_1,i_2) \plusequals A(i_1,i_3)\cdot B(i_3,i_2)$ can be modeled as a subset of an $N$--by--$N$--by--$N$ discrete cube $\iters=\set{1,\ldots,N}^3\subset\ZZ^3$.
The data needed to execute a subset of this cube is modeled by its projections
onto faces of the cube, i.e., $(i_1,i_2)$, $(i_1,i_3)$ and $(i_3,i_2)$ for each $(i_1,i_2,i_3)\in E$. $F$ is the maximum number
of points whose projections are of size at most $O(M)$. In \cite{ITT04,BallardDemmelHoltzSchwartz11}
the authors used a geometric theorem of Loomis and Whitney (see Theorem~\ref{thm_LoomisWhitney}, a special case of \cite[Theorem~2]{LW49})
to show $F$ is maximized when $E$ is a cube with edge length $M^{1/2}$,
so $F = O(M^{3/2})$, yielding the communication lower bound
$\Omega(M\cdot\niters/F) = \Omega(M\cdot N^3/M^{3/2}) = \Omega(N^3/M^{1/2})$.
\exend
\end{exMatmul}
\noindent Our approach is based on a major generalization of \cite{LW49} in \cite{BCCT10} 
that lets us geometrically model a much larger class of algorithms with an 
arbitrary number of loops and array expressions involving affine functions of indices.

We outline our results:
\begin{description}
\item[\Sectn{\ref{sec:model}}] introduces the geometric model that we use to compute the bound $F$
(introduced above). We first describe the model for matrix multiplication
(as used in \cite{ITT04,BallardDemmelHoltzSchwartz11}) and apply Theorem~\ref{thm_LoomisWhitney} to obtain a bound of the
form $F = O(M^{3/2})$. 
Then we describe how to generalize the geometric model
to any program that accesses arrays inside loops with subscripts that are affine 
functions of the loop indices, such as $A(i_1,i_2)$, $B(i_2,i_3)$, 
$C(i_1+3i_2,1-i_1+7i_2+8i_3,\ldots)$, etc.
(More generally, there do not need to be explicit loops or array references; see \sectn{\ref{sec:lb}}
for the general case.) 
In this case, we seek a bound of the form $F = O(M^{\shbl})$ for some
constant $\shbl$.
\item[\Sectn{\ref{sec:hbl}}] 
takes the geometric model and in Theorem~\ref{thm:1} proves a bound yielding 
the desired $\shbl$.
HBL stands for H\"{o}lder-Brascamp-Lieb, after the authors of precedents and generalizations 
of \cite{LW49}.
In particular, Theorem~\ref{thm:1} gives the constraints for 
a linear program with integer coefficients whose solution is $\shbl$.
\item[\Sectn{\ref{sec:lb}}] formalizes the argument that an upper bound on $F$ 
yields a lower bound on communication of the form $\Omega (M\cdot\niters/F)$. 
Some technical assumptions are required for this to work. Using the same approach
as in \cite{BallardDemmelHoltzSchwartz11}, we need to eliminate
the possibility that an algorithm could do an unbounded amount of work on a fixed
amount of data without requiring any communication. We also describe how the
bound applies to communication in a variety of computer architectures (sequential,
parallel, heterogeneous, etc.).
\item[\Sectn{\ref{sec:undecidability}}.] Theorem~\ref{thm:1} proves the existence of a certain set of linear constraints, but does not provide an algorithm for writing it down. 
Theorem~\ref{thm:decision} shows we can always compute a linear program with the same feasible region; i.e., the lower bounds discussed in this work are decidable. Interestingly, it may be undecidable to compute the exact set of constraints given by Theorem~\ref{thm:1}:
Theorem~\ref{thm4.1} shows that having such an effective algorithm for an arbitrary 
set of array subscripts (that are affine functions of the loop indices)
is equivalent to a positive solution to Hilbert's Tenth Problem over $\QQ$,
i.e., deciding whether a system of rational polynomial equations has a rational solution. 
Decidability over $\QQ$ is a longstanding open question; this is to be contrasted 
with the problem over $\ZZ$, proven undecidable by 
Matiyasevich-Davis-Putnam-Robinson \cite{Matiyasevich93}, 
or with the problem over $\RR$, which is Tarski-decidable \cite{Tarski-book}.
\item[\Sectn{\ref{sec:bound}}.]
While Theorem~\ref{thm:decision} demonstrates that our lower bounds are decidable, the algorithm given is quite inefficient. 
In many cases in many cases of practical interest we can write down an equivalent linear program in fewer steps; in \sectn{\ref{sec:bound}} we present several such cases 
(Part~2 of this paper will discuss others). For example, when every
array is subscripted by a subset of the loop indices, 
Theorem~\ref{thm:prod} 
shows we can obtain an equivalent linear program immediately.
For example, this includes
matrix multiplication, which has three loop indices $i_1$, $i_2$, and $i_3$, and
array references $A(i_1,i_3)$, $B(i_3,i_2)$ and $C(i_1,i_2)$.
Other examples include tensor contractions, the direct N-body algorithm,
and database join.
\item[\Sectn{\ref{sec:attain}}] considers when the communication lower bound for an algorithm
is attainable by reordering the executions of the inner loop; we call such an algorithm {\em communication optimal}. 
For simplicity, we consider the case where all reorderings are correct.
Then, for the case discussed in Theorem~\ref{thm:prod}, i.e., when every array is subscripted by a subset
of the loop indices,
Theorem~\ref{thm6.1} shows that the dual linear program
yields the ``block sizes'' needed for a communication-optimal sequential algorithm.
\Sectn{\ref{sec:attain-prod-par}} uses Theorem~\ref{thm:prod} to do the same for a parallel algorithm,
providing a generalization of the ``2.5D'' matrix algorithms in \cite{2.5D_EuroPar}.
Other examples again include tensor contractions, the direct N-body algorithm,
and database join.
\item[\Sectn{\ref{sec:conclusions}}] summarizes our results, and outlines the
contents of Part~2 of this paper.
Part~2 will discuss how to compute lower bounds more efficiently and will include more cases where optimal algorithms are possible,
including a discussion of loop dependencies.
\end{description}
This completes the outline of the paper. 
We note that it is possible to omit the detailed proofs in \sectns{\ref{sec:hbl}~and~\ref{sec:undecidability}}  on a first reading; the rest of the paper is self-contained.

To conclude this introduction, we apply
our theory to two examples (revisited later), and show how to derive communication-optimal sequential
algorithms.
\begin{exMatmul}[Part~2/5]
The code for computing $C=A\cdot B$ is
\begin{align*}
&\For{i_1}{1}{N} ,\quad \For{i_2}{1}{N} ,\quad \For{i_3}{1}{N}, \\
&\qquad C(i_1,i_2) \plusequals A(i_1,i_3) \cdot B(i_3,i_2)
\end{align*}
(We omit the details of replacing `$\plusequals$' by `$=$' when $i_3=1$; this will not affect our asymptotic analysis.)
We record everything we need to know about this algorithm in the following matrix $\Delta$, which has one column for each loop index $i_1,i_2,i_3$, one row for each array $A,B,C$, and ones and zeros to indicate which arrays have which loop indices as subscripts, i.e.,
\[
\Delta = 
\bordermatrix{
  & i_1 & i_2 & i_3  \cr
A & 1   & 0   & 1  \cr
B & 0   & 1   & 1  \cr
C & 1   & 1   & 0
};
\]
for example, the top-left one indicates that $i_1$ is a subscript of $A$.
Suppose we solve the linear program to maximize $x_1 + x_2 + x_3$
subject to $\Delta \cdot (x_1,x_2,x_3)^T \le (1,1,1)^T$, getting $x_1 = x_2 = x_3 = 1/2$.
Then Theorem~\ref{thm6.1} tells us that
$\shbl = x_1 + x_2 + x_3 = 3/2$, yielding the well-known communication lower bound of
$\Omega (N^3/M^{\shbl-1}) = \Omega (N^3/M^{1/2})$ for any sequential implementation with
a fast memory size of $M$. 
Furthermore, a communication-optimal sequential implementation, 
where the only optimization permitted is reordering the execution 
of the inner loop iterations to enable reduced communication
(e.g., using Strassen's algorithm instead is not permitted)
is given by blocking the $3$ loops using blocks of size $M^{x_1}$--by--$M^{x_2}$--by--$M^{x_3}$, i.e., $M^{1/2}$--by--$M^{1/2}$--by--$M^{1/2}$, yielding the code
\begin{align*}
&\ForStep{j_1}{1}{M^{x_1}}{N} ,\quad \ForStep{j_2}{1}{M^{x_2}}{N} ,\quad \ForStep{j_3}{1}{M^{x_3}}{N} ,\\
&\qquad \For{k_1}{0}{M^{x_1}-1} ,\quad \For{k_2}{0}{M^{x_2}-1} ,\quad \For{k_3}{0}{M^{x_3}-1} ,\\
&\qquad\qquad (i_1,i_2,i_3) = (j_1,j_2,j_3)+(k_1,k_2,k_3) \\
&\qquad\qquad C(i_1,i_2) \plusequals A(i_1,i_3) \cdot B(i_3,i_2)
\end{align*}
yielding the well-known blocked algorithm where the innermost three loops multiply
$M^{1/2}$--by--$M^{1/2}$ blocks of $A$ and $B$ and update a block of $C$.
We note that to have all three blocks fit in fast memory simultaneously, they would have to be slightly smaller than  $M^{1/2}$--by--$M^{1/2}$ by a constant factor. 
We will address this constant factor and others, which are important in practice, in Part~2 of this work (see \sectn{\ref{sec:conclusions}}).
\exend
\end{exMatmul}
\begin{exComplicated}[Part~1/4]
The code is
\begin{align*}
&\For{i_1}{1}{N} ,\quad \For{i_2}{1}{N} ,\quad \For{i_3}{1}{N} ,\quad \For{i_4}{1}{N} ,\quad \For{i_5}{1}{N} ,\quad \For{i_6}{1}{N} ,\\
&\qquad A_1(i_1,i_3,i_6) \plusequals \text{func}_1(A_2(i_1,i_2,i_4), A_3(i_2,i_3,i_5), A_4(i_3,i_4,i_6)) \\
&\qquad A_5(i_2,i_6) \plusequals \text{func}_2(A_6(i_1,i_4,i_5), A_3(i_3,i_4,i_6))
\end{align*}
This is not meant to be a practical algorithm but rather an example of the generality of our approach.
We record everything we need to know about this program into a 
6-by-6 matrix $\Delta$ with one column for every
loop index $i_1,\ldots,i_6$, one row for every distinct array expression $A_1,\ldots,A_6$, 
and ones and zeros indicating
which loop index appears as a subscript of which array.  
We again solve the linear program to maximize $x_1 + \cdots + x_6$ subject to $\Delta \cdot (x_1,\ldots,x_6)^T \le (1,\ldots,1)^T$,
getting $(x_1,\ldots,x_6) = (\frac{2}{7},\frac{3}{7},\frac{1}{7},\frac{2}{7},\frac{3}{7},\frac{4}{7})$.
Then Theorem~\ref{thm6.1} tells us that
$\shbl = x_1 + \cdots + x_6 = \frac{15}{7}$, yielding the communication lower bound of
$\Omega (N^6/M^{\shbl-1}) = \Omega (N^6/M^{8/7})$ for any sequential implementation
with a fast memory size of $M$.
Furthermore, a communication-optimal sequential implementation is given by
blocking loop index $i_j$ by block size of $M^{x_j}$.
In other words, the six innermost loops operate on a block of size $M^{2/7}$--by--$\cdots$--by--$M^{4/7}$.
\exend
\end{exComplicated}
\noindent Other examples appearing later include matrix-vector multiplication, tensor contractions,
the direct $N$--body algorithm, database join, and computing matrix powers $A^k$.

\section{Geometric Model} \label{sec:model}

We begin by reviewing the geometric model of matrix multiplication introduced in \cite{ITT04},
describe how it was extended to more general linear 
algebra algorithms in \cite{BallardDemmelHoltzSchwartz11}, and finally show how to generalize it to the class of programs considered in this work.

We geometrically model matrix multiplication $C = A \cdot B$ as sketched in Parts~1/5 and~2/5 of the matrix multiplication example (above).
However the `classical' (3 nested loops) algorithm is organized, we can represent it by a set of integer lattice points indexed by $\indx=(i_1,i_2,i_3)\in\ZZ^3$.
If $A,B,C$ are $N$--by--$N$, these indices occupy an $N$--by--$N$--by--$N$ discrete cube $\iters = \set{1,\ldots,N}^3$.
Each point $\indx \in \iters$ represents an operation $C(i_1,i_2) \plusequals A(i_1,i_3)\cdot B(i_3,i_2)$ in the inner loop, and each projection of $\indx$ onto a face of the cube represents a required operand: e.g., $(i_1,i_2)$ represents $C(i_1,i_2)$ (see Figure~\ref{fig_LoomisWhitney}).
\begin{figure}[htbp!]
\begin{center}
\includegraphics[scale=0.4]{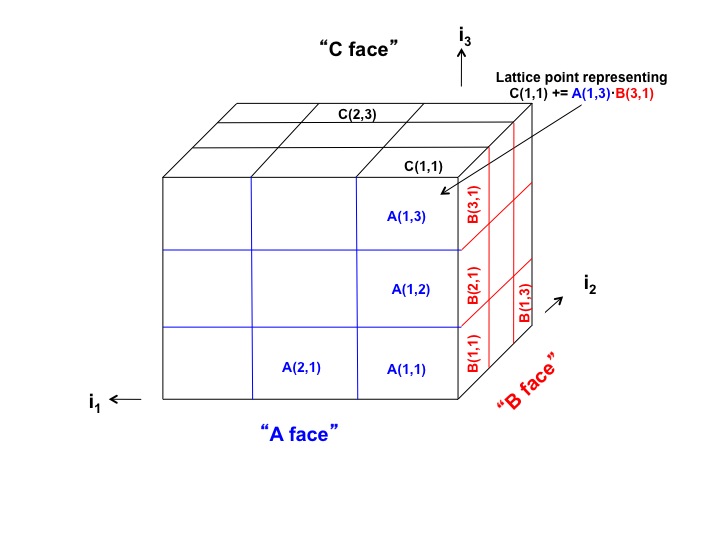}
\caption{Modeling 3x3 Matrix Multiplication as a Lattice 
(each lattice point represented by a cube)}
\label{fig_LoomisWhitney}
\end{center}
\end{figure}

We want a bound $F \ge |E|$, where $E\subseteq \iters$ is any set of lattice points $\indx$ representing operations that can be performed just using data available in fast memory of size $M$.
Let $E_A$, $E_B$ and $E_C$ be projections of $E$ onto faces of the cube representing all the required operands $A(i_1,i_3)$, $B(i_3,i_2)$ and $C(i_1,i_2)$, resp., needed to perform the operations represented by $E$. 
%
%
%
A special case of \cite[Theorem~2]{LW49} gives us the desired bound on $E$:
\begin{theorem}[Discrete Loomis-Whitney Inequality, 3D Case]\label{thm_LoomisWhitney}
With the above notation, for any finite subset $E$ of the 3D integer lattice $\ZZ^3$, $|E| \le |E_A|^{1/2} \cdot |E_B|^{1/2} \cdot |E_C|^{1/2}$.
\end{theorem}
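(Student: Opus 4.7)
The plan is to prove the inequality by slicing along the $i_3$ coordinate and then applying a Cauchy--Schwarz step to recombine the slice estimates. The motivating identity is $|E| = \sum_{i_3} |E_{i_3}|$, where $E_{i_3} \ceq \set{(i_1,i_2) : (i_1,i_2,i_3) \in E}$ is the slice at height $i_3$. I would also fix notation for the slices of the projections: $(E_A)_{i_3} \ceq \set{i_1 : (i_1,i_3) \in E_A}$ and $(E_B)_{i_3} \ceq \set{i_2 : (i_2,i_3) \in E_B}$, so that $|E_A| = \sum_{i_3} |(E_A)_{i_3}|$ and $|E_B| = \sum_{i_3} |(E_B)_{i_3}|$.

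First I would observe the three elementary containments that do the geometric work. (a) Since $(i_1,i_2) \in E_{i_3}$ implies $(i_1,i_2) \in E_C$, we have $E_{i_3} \subseteq E_C$, hence $|E_{i_3}| \le |E_C|$. (b) Since $E_{i_3}$ sits inside the rectangle $(E_A)_{i_3} \times (E_B)_{i_3}$ (i.e., the trivial 2D Loomis--Whitney bound on the slice), we have $|E_{i_3}| \le |(E_A)_{i_3}| \cdot |(E_B)_{i_3}|$. Combining these two as geometric means gives the key slice estimate
\[
|E_{i_3}| \;=\; |E_{i_3}|^{1/2} \cdot |E_{i_3}|^{1/2} \;\le\; |E_C|^{1/2} \cdot \bigl(|(E_A)_{i_3}| \cdot |(E_B)_{i_3}|\bigr)^{1/2}.
\]

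Summing over $i_3$ and pulling the $|E_C|^{1/2}$ outside yields
\[
|E| \;\le\; |E_C|^{1/2} \sum_{i_3} |(E_A)_{i_3}|^{1/2} \cdot |(E_B)_{i_3}|^{1/2}.
\]
Now I would apply the Cauchy--Schwarz inequality to the sum on the right, obtaining
\[
\sum_{i_3} |(E_A)_{i_3}|^{1/2} \cdot |(E_B)_{i_3}|^{1/2} \;\le\; \Bigl(\sum_{i_3} |(E_A)_{i_3}|\Bigr)^{1/2} \Bigl(\sum_{i_3} |(E_B)_{i_3}|\Bigr)^{1/2} \;=\; |E_A|^{1/2} \cdot |E_B|^{1/2},
\]
which combined with the previous display delivers the claimed bound.

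There is no real obstacle here: the only non-routine step is the decision to split $|E_{i_3}|$ as $|E_{i_3}|^{1/2} \cdot |E_{i_3}|^{1/2}$ before summing, so that one factor can be absorbed into the uniform bound $|E_C|^{1/2}$ and the other can be handled by Cauchy--Schwarz. Everything else is bookkeeping about projections and slices of finite subsets of $\ZZ^3$. The argument generalizes immediately to higher dimensions by induction on the number of coordinates, but for the 3D case stated in the theorem this one-line Cauchy--Schwarz after slicing is enough.
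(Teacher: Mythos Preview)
Your proof is correct; this is the classical slicing plus Cauchy--Schwarz argument for the 3D discrete Loomis--Whitney inequality, and every step checks out. One minor notational quibble: the paper orders the coordinates of $E_B$ as $(i_3,i_2)$ rather than $(i_2,i_3)$, but this is immaterial to the argument.

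The paper itself does not prove this theorem directly; it is stated as a special case of \cite[Theorem~2]{LW49} and then subsumed by the far more general HBL machinery of \sectn{\ref{sec:hbl}} (Theorem~\ref{thm:1} and Theorem~\ref{thm:field}). That general proof proceeds by a completely different route --- reduction to extreme points of the polytope $\PP$ via multilinear Riesz--Th\"orin interpolation, followed by an inductive factorization through critical subspaces --- and is designed to handle arbitrary linear projections $\phi_j$ and arbitrary exponents $s_j$. Your argument is much more elementary and direct for the 3D coordinate-projection case, and it is essentially the original Loomis--Whitney proof; what it buys is transparency, while what the paper's approach buys is the vast generalization to all affine subscripts that drives the rest of the paper.
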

\noindent Finally, since the entries represented by $E_A,\brk E_B,\brk E_C$ fit in fast memory by assumption (i.e., $|E_A|,\brk |E_B|,\brk |E_C| \le M$), this yields the desired bound $F \ge |E|$:
\begin{equation}\label{eqn2.1}
|E| \le |E_A|^{1/2} \cdot |E_B|^{1/2} \cdot |E_C|^{1/2} \leq M^{1/2} \cdot M^{1/2} \cdot M^{1/2} = M^{3/2} \qec F \; .
\end{equation}

Irony et al.\  \cite{ITT04} applied this approach to obtain a lower bound for matrix multiplication.
Ballard et al.\ \cite{BallardDemmelHoltzSchwartz11} extended this approach to programs of the form
\[
\begin{split}
\text{for all } &(i_1,i_2,i_3) \in \iters \subset \ZZ^3, \text{  in some order}, \\
 &C(i_1,i_2) = C(i_1,i_2) +_{i_1,i_2} A(i_1,i_3) \ast_{i_1,i_2,i_3} B(i_3,i_2)
\end{split}
\]
and picked the iteration space $\iters$, arrays $A,B,C$, and binary operations $+_{i_1,i_2}$ and $\ast_{i_1,i_2,i_3}$ to represent many (dense or sparse) linear algebra algorithms.
To make this generalization, Ballard et al.\ made several observations, which also apply to our case, below.

 First, explicitly nested loops are not necessary, as long as one can identify each execution of the statement(s) in the inner loop with a unique lattice point; for example, many recursive computations can also match this model.
Second, the upper bound $F$ when $\iters$ is an $N$--by--$N$--by--$N$ cube is valid for any subset $\iters \subset \ZZ^3$.  
Thus, we can use this bound for sparse linear algebra; sparsity may make the bound hard or impossible to attain, but it is still valid.  
Third, the memory locations represented by array expressions like $A(i_1,i_3)$ need not be contiguous in memory, as long as there is an injective\footnote{We assume injectivity since otherwise an algorithm could potentially access only the same few memory locations over and over again. Given our asymptotic analysis, however, we could weaken this assumption, allowing a constant number of array entries to occupy the same memory address.} mapping from array expressions to memory locations.  
In other words, one can shift the index expressions like $A(i_1+3,i_3+7)$, or use pointers, or more complex kinds of data structures, to determine where the array entry actually resides in memory.
Fourth, the arrays $A$, $B$ and $C$ do not have to be distinct; in the case of (in-place) $LU$ factorization they are in fact the same, since $L$ and $U$ overwrite $A$.  
Rather than bounding each array, say $|E_A| \leq M$, if we knew that $A$, $B$ and $C$ did not overlap, we could tighten the bound by a constant factor by using $|E_A| + |E_B| + |E_C| \leq M$.
(For simplicity, we will generally not worry about such constant factors in this paper, and defer their discussion to Part~2 of this work.)

Now we consider our general case, which we will refer to as the \emph{geometric model}.
This model generalizes the preceding one by allowing an arbitrary number of loops defining the index space and an arbitrary number of arrays each with arbitrary dimension.
The array subscripts themselves can be arbitrary affine (integer) combinations of the loop indices, e.g., $A(i_2,i_3-2i_4,i_1+i_2+3)$. 
%
\begin{definition}
\label{def:geometricmodel}
An instance of the geometric model is an abstract representation of an algorithm, taking the form
 \begin{equation}
\label{eqn_AlgModel}
\begin{split}
\text{\rm for all } &\indx \in \iters \subseteq \ZZ^d, \text{ \rm in some order}, \\
 &\text{\rm inner\_loop}(\indx,(A_1,\ldots,A_m),(\phi_1,\ldots,\phi_m)) 
\end{split}
\end{equation}
For $j\in\set{1,\ldots,m}$, the functions $\phi_j \from \ZZ^d \to \ZZ^{d_j}$ are $\ZZ$--affine maps, and the functions $A_j \from \phi_j(\ZZ^d) \to \set{\text{\rm variables}}$ are injections into some set of variables.
The subroutine $\text{\rm inner\_loop}$ constitutes a fairly arbitrary section of code, with the constraint that it accesses each of the variables $A_1(\phi_1(\indx)),\ldots,A_m(\phi_m(\indx))$, either as an input, output, or both, in every iteration $\indx \in \iters$.
\end{definition}
\noindent (We will be more concrete about what it means to `access a variable' when we introduce our execution model in \sectn{\ref{sec:lb}}.)

Assuming that all variables are accessed in each iteration rules out certain optimizations, or requires us not to count certain inner loop iterations in our lower bound.
For example, consider Boolean matrix multiplication, where the inner loop body is $C(i_1,i_2) = C(i_1,i_2) \vee (A(i_1,i_3) \wedge B(i_3,i_2))$. 
As long as $A(i_1,i_3) \wedge B(i_3,i_2)$ is false, $C(i_1,i_2)$ does not need to be accessed. 
So we need either to exclude such optimizations, or not count such loop iterations.
And once $C(i_1,i_2)$ is true, its value cannot change, and no more values of $A(i_1,i_3)$ and $B(i_3,i_2)$ need to be accessed; if this optimization is performed, then these loop iterations would not occur and not be counted in the lower bound.
In general, if there are data-dependent branches in the inner loop (e.g., `flip a coin\ldots'), we may not know to which subset $\iters$ of all the loop iterations our model applies until the code has executed.
We refer to a later example (database join) and \cite[\sectn{3.2.1}]{BallardDemmelHoltzSchwartz11} for more examples and discussion of this assumption.

Following the approach for matrix multiplication, we need to bound
$|E|$ where $E$ is a nonempty finite subset of $\iters$. To execute $E$, we need array entries
$A_1(\phi_1(E)),\ldots,\brk A_m(\phi_m(E))$.
Analogous to the analysis of matrix multiplication, we need to 
bound $|E|$  in terms of the number of array entries needed:
$|\phi_j(E)|$ entries of $A_j$, for $j=\set{1,\ldots,m}$.

Suppose there were nonnegative constants $s_1,\ldots,s_m$ such that
for any such $E$,
\begin{equation}\label{eqn2.2}
|E| \leq \prod_{j=1}^m |\phi_j(E)|^{s_j}
\end{equation}
i.e., a generalization of Theorem~\ref{thm_LoomisWhitney}.
Then just as with matrix multiplication, if the number of entries
of each array were bounded by $|\phi_j(E)| \leq M$, we could bound
\begin{equation*} 
|E| \leq \prod_{j=1}^m |\phi_j(E)|^{s_j} \leq \prod_{j=1}^m M^{s_j}
= M^{\sum_{j=1}^m s_j} \qec M^{\shbl} \qec F
\end{equation*}
where $\shbl \ceq \sum_{j=1}^m s_j$.

The next section shows how to determine the constants $s_1,\ldots,s_m$
such that inequality~\eqref{eqn2.2} holds.
To give some rough intuition
for the result, consider again the special case of Theorem~\ref{thm_LoomisWhitney},
where $|E|$ can be interpreted as a volume, and each $|\phi_j(E)|$
as an area. Thus one can think of $|E|$ as having units of,
say, ${\rm meters}^3$, and $|\phi_j(E)|$ as having units of
${\rm meters}^2$. Thus, for $\prod_{j=1}^3 |\phi_j(E)|^{s_j}$
to have units of ${\rm meters}^3$, we would expect the $s_j$ to
satisfy $3 = 2s_1 + 2s_2 + 2s_3$. But if $E$ were a lower dimensional
subset, lying say in a plane, then $|E|$ would have units
${\rm meters}^2$ and we would get a different constraint on the $s_j$.
This intuition is formalized in Theorem~\ref{thm:1} below.

\section{Upper bounds from HBL theory} \label{sec:hbl}

As discussed above, we aim to establish an upper bound of the form \eqref{eqn2.2} for $|E|$, for any finite set $E$ of loop iterations.
In this section we formulate and prove Theorem~\ref{thm:1}, which states that such a bound is valid if and only if the exponents $s_1,\ldots,s_m$ satisfy a certain finite set of linear constraints, specified in \eqref{subcriticalhypothesis} below.

\subsection{Statement and Preliminaries}
%
%

As in the previous section, $\indx = \tup{i_1,\ldots,i_d}$ denotes a point in $\ZZ^d$.
For each $j \in \set{1,2,\ldots,m}$ we are given a $\ZZ$--affine map $\phi_j \from \ZZ^d \to \ZZ^{d_j}$, i.e., a function that returns an affine integer combination of the coordinates $i_1,\ldots,i_d$ as a $d_j$--tuple, where $d_j$ is the dimension (number of arguments in the subscript) of the array $A_j$. 
So, the action of $\phi_j$ can be represented as multiplication by a $d_j$--by--$d$ integer matrix, followed by a translation by an integer vector of length $d_j$. 
As mentioned in \sectn{\ref{sec:model}}, there is no loss of generality in assuming that $\phi_j$ is linear, rather than affine. 
This is because $A_j$ can be any injection (into memory addresses), so we can always compose the translation (a bijection) with $A_j$.
In this section we assume that this has been done already.

\begin{notation}
We establish some group theoretic notation; for an algebra reference see, e.g., \cite{lang2002algebra}.
We regard the set $\ZZ^r$ (for any natural number $r$) as an additive Abelian group, and any $\ZZ$--linear map $\phi \from \ZZ^r \to \ZZ^{r'}$ as a group homomorphism.
An Abelian group $G$ has a property called its \emph{rank}, related to the dimension of a vector space: the rank of $G$ is the cardinality of any maximal subset of $G$ that is linearly independent.
All groups discussed in this paper are Abelian and finitely generated, i.e., generated over $\ZZ$ by finite subsets.
Such groups have finite ranks.
For example, the rank of $\ZZ^r$ is $r$.
The notation $H \subim G$ indicates that $H$ is a subgroup of $G$, and the notation $H \subpr G$ indicates that $H$ is a proper subgroup.
\end{notation}

The following theorem provides bounds which are fundamental to our application.
\begin{theorem} 
\label{thm:1}
Let $d$ and $d_j$ be nonnegative integers and $\phi_j \from \ZZ^d \to \ZZ^{d_j}$ be group homomorphisms for $j\in\set{1,2,\ldots,m}$, and consider some
$s \in [0,1]^m$.
Suppose that
\begin{equation} \label{subcriticalhypothesis} 
\Rank{H} \le \sum_{j=1}^m s_j\Rank{\phi_j(H)} \qquad \text{for all subgroups } H \subim \ZZ^d.
\end{equation}
Then
\begin{equation} \label{mainconclusion}
|E|\le\prod_{j=1}^m |\phi_j(E)|^{s_j} \qquad \text{for all nonempty finite sets } E \subseteq \ZZ^d.
\end{equation}
Conversely, if \eqref{mainconclusion} holds for $s \in [0,1]^m$, then $s$ satisfies \eqref{subcriticalhypothesis}.
\end{theorem}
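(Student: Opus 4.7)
My plan is to prove the two implications separately.

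For the converse, given \eqref{mainconclusion} and a subgroup $H \subim \ZZ^d$, I would test on $E_N = H \cap [-N,N]^d$. Standard counting of lattice points in a convex body gives $|E_N| = \Theta(N^{\Rank{H}})$ and $|\phi_j(E_N)| = \Theta(N^{\Rank{\phi_j(H)}})$ as $N \to \infty$. Substituting into \eqref{mainconclusion}, taking logarithms, dividing by $\log N$, and passing to the limit isolates precisely \eqref{subcriticalhypothesis}.

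For the forward direction, I would reduce to the continuous Brascamp--Lieb inequality of \cite{BCCT10} in three steps. First, I would promote \eqref{subcriticalhypothesis} to the subspace condition $\DimR{V} \le \sum_{j=1}^m s_j \DimR{\phi_j^\RR(V)}$ for every real subspace $V \subseteq \RR^d$, where $\phi_j^\RR$ is the $\RR$-linear extension of $\phi_j$. Since each $\Kernel{\phi_j^\RR}$ is rational (it has an integer basis, as $\phi_j$ is integer-linear), one can perturb a generic real $V$ to a rational subspace $V' = H' \otimes_\ZZ \RR$ of the same dimension without altering any $\DimR{\phi_j^\RR(V')}$, thereby reducing the real-subspace condition to the given hypothesis on $H'$. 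Second, I would invoke the continuous Brascamp--Lieb theorem to obtain $\int_{\RR^d} \prod_{j=1}^m f_j(\phi_j^\RR x)^{s_j}\, dx \le C \prod_{j=1}^m \|f_j\|_1^{s_j}$ for some finite $C$, and test with $f_j$ the indicator of the thickening $\phi_j(E) + \phi_j^\RR([0,1)^d)$ of $\phi_j(E)$. The left-hand side is at least $|E|$ because the integrand equals $1$ throughout the unit-cube thickening of $E$, while the right-hand side is bounded by a constant multiple of $\prod_j |\phi_j(E)|^{s_j}$, so one arrives at $|E| \le C' \prod_j |\phi_j(E)|^{s_j}$ with $C'$ depending only on $(\phi_j, s)$. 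Third, to remove $C'$ and obtain the sharp form \eqref{mainconclusion}, I would apply this bound to the $k$-th Cartesian power $E^k \subseteq \ZZ^{dk}$ equipped with the block-diagonal maps $\phi_j^{\oplus k}$; since $|E^k| = |E|^k$ and $|\phi_j^{\oplus k}(E^k)| = |\phi_j(E)|^k$, taking $k$-th roots and letting $k \to \infty$ absorbs any sub-exponentially growing constant.

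The principal obstacle I anticipate is the constant-free form of \eqref{mainconclusion}, which separates it from generic Brascamp--Lieb inequalities: the tensor-power step above reduces this to controlling the growth in $k$ of the Brascamp--Lieb constant for $\phi_j^{\oplus k}$, and verifying sub-exponential growth is the delicate analytic point. An appealing alternative that sidesteps the continuous detour is to prove \eqref{mainconclusion} directly by induction on $d$, slicing along a direction adapted to a subgroup for which \eqref{subcriticalhypothesis} is saturated, or to invoke an entropic Shearer--Han-type inequality for finitely generated abelian groups and recover the cardinality bound by applying it to the uniform distribution on $E$. A secondary technical point is the promotion of the subgroup condition to all real subspaces, which requires attention to the semi-continuity of $\DimR{\phi_j^\RR(V)}$ in $V$ so that the generic perturbation argument above is actually valid.
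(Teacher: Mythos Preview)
Your converse is essentially the paper's: it tests \eqref{mainconclusion} on large boxes in $H$ (the paper uses $E_N = \{\sum_i n_i e_i : 1\le n_i\le N\}$ for a maximal independent set in $H$, you use $H\cap[-N,N]^d$), then compares growth rates.

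For the forward direction, your primary route has two genuine gaps. First, in Step~3 the tensor-power trick cannot remove the constant: the continuous Brascamp--Lieb constant is \emph{exactly} multiplicative under direct sums (extremizers tensor), so $C(\phi^{\oplus k},s)=C(\phi,s)^k$ and $C_k^{1/k}=C$ for all $k$. The thickening volumes $v_j=\mathrm{vol}(\phi_j^\RR([0,1)^d))$ likewise tensor as $v_j^k$, so your $C'$ survives unchanged in the limit; this is not a ``delicate analytic point'' but a hard obstruction. Second, your Step~1 perturbation argument goes the wrong way: $\dim\phi_j^\RR(V)=\dim V-\dim(V\cap\Kernel{\phi_j^\RR})$ is \emph{lower} semicontinuous in $V$, so perturbing a special real $V$ to a nearby rational $V'$ can only increase $\dim\phi_j^\RR(V')$, which weakens (not preserves) the constraint you need. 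Whether $\PP_\QQ=\PP_\RR$ for integer data is likely true, but establishing it directly requires structural information (e.g., that the binding constraints come from the lattice generated by the kernels) that is essentially as deep as the theorem itself; the paper in fact leaves the $\RR$-polytope as a possibly proper subpolytope in \sectn{\ref{sec:Tarskidecides}}.

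The paper sidesteps both issues by never passing to $\RR$ or to the continuous inequality. It embeds $\ZZ^d\hookrightarrow\QQ^d$ (Lemma~\ref{lem:PZequalsPQ}), proves the sharp $\ell^{1/s_j}$ inequality over an arbitrary field by induction on $\dim V$, and handles the constant-$1$ issue structurally: at an extreme point of $\PP$ either $s\in\{0,1\}^m$ (where the inequality follows from injectivity of $x\mapsto(\phi_k(x))_{s_k=1}$) or there is a proper critical subspace $W$, which lets one factor the datum into $(W,\ldots)$ and $(V/W,\ldots)$ and apply induction to each (Lemmas~\ref{lemma:factorization} and~\ref{lemma:criticalW}); Riesz--Th\"orin interpolation then fills in the rest of $\PP$. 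This is exactly the ``induction on $d$, slicing along a saturated subgroup'' alternative you mention --- that is the argument that works, and it is what you should develop.
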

While \eqref{subcriticalhypothesis} may suggest that there are an infinite number of inequalities constraining $s$ (one for each subgroup $H \subim \ZZ^d$), in fact each rank is an integer in the range $[0,d]$, and so there are at most $(d+1)^{m+1}$ different inequalities.

Note that if $s\in[0,\infty)^m$ satisfies \eqref{subcriticalhypothesis} or \eqref{mainconclusion}, then any $t\in[0,\infty)^m$ where each $t_j \ge s_j$ does too. 
This is obvious for \eqref{subcriticalhypothesis}; in the case of \eqref{mainconclusion}, since $E$ is nonempty and finite, then for each $j$, $1 \le |\phi_j(E)| < \infty$, so $|\phi_j(E)|^{s_j}\le |\phi_j(E)|^{t_j}$.

The following result demonstrates there is no loss of generality restricting $s\in[0,1]^m$, rather than in $[0,\infty)^m$, in Theorem~\ref{thm:1}.
\begin{proposition} \label{prop:maxexponent}
Whenever \eqref{subcriticalhypothesis} or \eqref{mainconclusion} holds for some $s\in[0,\infty)^m$, it holds for $t\in[0,1]^m$ where $t_j = \min(s_j,1)$ for $j\in\set{1,\ldots,m}$.
\end{proposition}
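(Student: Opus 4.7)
The plan is to handle both parts in parallel by a common trick: partition $\{1,\ldots,m\}$ into $J = \{j : s_j > 1\}$ and its complement, so that $t_j = 1$ on $J$ and $t_j = s_j$ on $J^c$. The loss incurred by truncating $s_j$ to $1$ on $J$ will be paid for by fibering along the $J$-coordinates (for \eqref{mainconclusion}) or by quotienting by the $J$-kernels (for \eqref{subcriticalhypothesis}).

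For \eqref{mainconclusion}, I would take a nonempty finite $E \subseteq \ZZ^d$ and slice it into fibers $E_p \ceq \{e \in E : \phi_j(e) = p_j \text{ for all } j \in J\}$ parametrized by $p \in \prod_{j \in J} \phi_j(E)$. The hypothesis applied to each nonempty $E_p$ collapses the $J$-factors (since each $\phi_j(E_p)$ is a singleton for $j \in J$) and leaves $|E_p| \le \prod_{j \in J^c} |\phi_j(E)|^{s_j}$. Since the number of nonempty fibers is at most $\prod_{j \in J} |\phi_j(E)|$, summing over $p$ yields $|E| \le \prod_j |\phi_j(E)|^{t_j}$, as required.

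For \eqref{subcriticalhypothesis}, I would perform the group-theoretic analogue. Given $H \subim \ZZ^d$, set $H' \ceq H \cap \bigcap_{j \in J} \Kernel{\phi_j}$. Applying the hypothesis to $H'$ kills the $J$-terms and gives $\Rank{H'} \le \sum_{j \in J^c} s_j \Rank{\phi_j(H')} \le \sum_{j \in J^c} t_j \Rank{\phi_j(H)}$, using monotonicity of rank under inclusion and $t_j = s_j$ on $J^c$. Meanwhile the homomorphism $h \mapsto (\phi_j(h))_{j \in J}$ restricted to $H$ has kernel exactly $H'$, so $H/H'$ embeds into $\bigoplus_{j \in J} \phi_j(H)$, whence $\Rank{H/H'} \le \sum_{j \in J} \Rank{\phi_j(H)} = \sum_{j \in J} t_j \Rank{\phi_j(H)}$. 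Combining via the standard rank additivity $\Rank{H} = \Rank{H'} + \Rank{H/H'}$ for finitely generated abelian groups gives the desired inequality.

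I do not anticipate a serious obstacle: a fiber is the cardinality-level analogue of a kernel, so the two arguments are formally parallel. The only routine ingredients needed are rank additivity along short exact sequences of finitely generated abelian groups (equivalently, flatness of $\QQ$ over $\ZZ$) and the first isomorphism theorem to produce the embedding $H/H' \hookrightarrow \bigoplus_{j \in J} \phi_j(H)$.
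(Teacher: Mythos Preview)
Your proof is correct and follows essentially the same approach as the paper: fiber $E$ over the $J$--coordinates for \eqref{mainconclusion}, and intersect $H$ with the $J$--kernels for \eqref{subcriticalhypothesis}. The only cosmetic differences are that the paper truncates one index $s_k>1$ at a time and iterates (you handle all of $J$ at once), and for the rank inequality the paper first passes to $\QQ$--vector spaces via Lemma~\ref{lem:PZequalsPQ} and uses a complement $U$ of $W\cap\Kernel{\phi_k}$ in $W$ rather than your quotient $H/H'$; your direct argument over $\ZZ$ is slightly more streamlined.
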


\subsection{Generalizations}
\label{sec:hbl-generalizations}
We formulate here two extensions of Theorem~\ref{thm:1}.
Theorem~\ref{thm:2} extends Theorem~\ref{thm:1} both by replacing the sets $E$ by functions and by replacing the codomains $\ZZ^{d_j}$ by finitely generated Abelian groups with torsion; this is a digression which is not needed for our other results.
Theorem~\ref{thm:field} instead replaces torsion-free finitely generated Abelian groups by finite-dimensional vector spaces over an arbitrary field.
In the case this field is the set of rational numbers $\QQ$, it represents only a modest extension of Theorem~\ref{thm:1}, but the vector space framework is more convenient for our method of proof. 
We will show in \Sectn{\ref{sec:hbl-generalizations}} how Theorem~\ref{thm:field} implies Theorem~\ref{thm:1} and Theorem~\ref{thm:2}.
The main part of \Sectn{\ref{sec:hbl}} will be devoted to the proof of Theorem~\ref{thm:field}.

\begin{notation}
Let $X$ be any set.
In the discussion below, $X$ will always be either a finitely generated Abelian group, or a finite-dimensional vector space over a field $\FF$.
For $p\in[1,\infty)$, $\ell^p(X)$ denotes the space of all $p$--power summable functions $f\from X\to\CC$, equipped with the norm 
\[ \norm{f}_p = \lt(\sum_{x\in X} |f(x)|^p \rt)^{1/p}.\]
$\ell^\infty(X)$ is the space of all bounded functions $f\from X\to\CC$, equipped with the norm $\norm{f}_\infty = \sup_{x\in X} |f(x)|$.
These definitions apply even when $X$ is uncountable: if $p\in[1,\infty)$, any function in $\ell^p(X)$ vanishes at all but countably many points in $X$, and the $\ell^\infty$--norm is still the supremum of $|f|$, not for instance an essential supremum defined using Lebesgue measure.
\end{notation}

We will use the terminology \emph{HBL datum} to refer to either of two types of structures; the first is defined as follows:
\begin{definition}
\label{def:groupdatum}
An \emph{Abelian group HBL datum} is a $3$--tuple 
\[ 
\dtup{G,\dtup{G_j},\dtup{\phi_j}} = \dtup{G,\dtup{G_1,\ldots,G_m},\dtup{\phi_1,\ldots,\phi_m}} 
\] 
where $G$ and each $G_j$ are finitely generated Abelian groups, $G$ is torsion-free, each $\phi_j\from G \to G_j$ is a group homomorphism, and the notation on the left-hand side always implies that $j\in\set{1,2,\ldots,m}$.
\end{definition}

\begin{theorem} 
\label{thm:2}
Consider an Abelian group HBL datum $\tup{G,\tup{G_j},\tup{\phi_j}}$ and $s\in[0,1]^m$.
Suppose that
\begin{equation} \label{subcriticalhypothesis2}
\Rank{H}\le \sum_{j=1}^m s_j\Rank{\phi_j(H)} \qquad \text{for all subgroups } H \subim G.
\end{equation}
Then
\begin{equation} \label{BL}
\sum_{x\in G} \prod_{j=1}^m f_j(\phi_j(x)) \le\prod_{j=1}^m \norm{f_j}_{1/s_j} \qquad  \text{for all functions } 0 \le f_j \in \ell^{1/s_j}(G_j).
\end{equation}
In particular,
\begin{equation} \label{BL;sets} 
|E| \le \prod_{j=1}^m |\phi_j(E)|^{1/s_j} \qquad\text{for all nonempty finite sets } E \subseteq G.
\end{equation}
Conversely, if \eqref{BL;sets} holds for $s\in[0,1]^m$, then $s$ satisfies \eqref{subcriticalhypothesisfield}.
\end{theorem}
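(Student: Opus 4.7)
The plan is to derive Theorem~\ref{thm:2} from Theorem~\ref{thm:field} by rationalizing: tensoring with $\QQ$ embeds the groups in $\QQ$--vector spaces, and then a max-pushforward descends the resulting functional Brascamp-Lieb inequality back to $G$ while accommodating the torsion in the $G_j$. The forward direction \eqref{subcriticalhypothesis2} $\Rightarrow$ \eqref{BL} is the main work; \eqref{BL;sets} follows from \eqref{BL} by taking $f_j = \indicator_{\phi_j(E)}$, and the converse is a box-counting argument.

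First I would set $V \ceq G \otimes_\ZZ \QQ$, $V_j \ceq G_j \otimes_\ZZ \QQ$, and $\psi_j \ceq \phi_j \otimes \mathrm{id}_\QQ \from V \to V_j$. Since $G$ is torsion-free, $G \hookrightarrow V$ and $\dim_\QQ V = \Rank{G}$. Every $\QQ$--subspace $W \subseteq V$ equals $H \otimes \QQ$ for $H \ceq W \cap G \subim G$ (clearing denominators shows $H$ spans $W$ over $\QQ$), and then $\psi_j(W) = \phi_j(H) \otimes \QQ$, so $\dim_\QQ W = \Rank{H}$ and $\dim_\QQ \psi_j(W) = \Rank{\phi_j(H)}$ (using that torsion dies under $\otimes\QQ$). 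Therefore \eqref{subcriticalhypothesis2} implies the analogous $\QQ$--subspace condition for $\tup{V,\tup{V_j},\tup{\psi_j}}$, and Theorem~\ref{thm:field} over $\QQ$ yields
\[ \sum_{v \in V} \prod_{j=1}^m g_j(\psi_j(v)) \le \prod_{j=1}^m \norm{g_j}_{1/s_j} \]
for all nonnegative $g_j \from V_j \to \RR$.

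The delicate step, which I expect to be the main obstacle, is pulling this inequality back from $V_j$ to $G_j$ while controlling both the pointwise values and the $\ell^{1/s_j}$ norm. Let $\iota_j \from G_j \to V_j$ be the canonical map, whose kernel is the finite torsion subgroup $T_j \subim G_j$. Given $f_j \ge 0$ on $G_j$, I would define the max-pushforward $g_j \from V_j \to \RR$ by $g_j(y) \ceq \max \dset{f_j(y') : y' \in \iota_j^{-1}(y)}$ for $y \in \iota_j(G_j)$ and $g_j(y) \ceq 0$ otherwise. The pointwise bound $f_j(\phi_j(x)) \le g_j(\psi_j(x))$ is immediate since $\phi_j(x) \in \iota_j^{-1}(\psi_j(x))$, and because $\max_i a_i \le \sum_i a_i$ for nonnegative $a_i$,
\[ \norm{g_j}_{1/s_j}^{1/s_j} = \sum_{y \in \iota_j(G_j)} \max_{y' \in \iota_j^{-1}(y)} f_j(y')^{1/s_j} \le \sum_{y' \in G_j} f_j(y')^{1/s_j} = \norm{f_j}_{1/s_j}^{1/s_j}, \]
with the obvious modification when $s_j = 0$. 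Chaining these two inequalities with the BL inequality over $V$ gives \eqref{BL}. The naive summation pushforward $g_j(y) \ceq \sum_{y' \in \iota_j^{-1}(y)} f_j(y')$ would preserve the pointwise bound but could inflate the $\ell^{1/s_j}$ norm by up to a factor of $|T_j|$, which is why the max construction is the key point.

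For the converse, suppose \eqref{BL;sets} holds and let $H \subim G$ have rank $r$. Fix a $\ZZ$--basis $b_1, \ldots, b_r$ of $H$ (which exists because $G$, hence $H$, is torsion-free and finitely generated) and set $E_N \ceq \dset{\sum_i c_i b_i : |c_i| \le N} \subseteq G$. Then $|E_N| = (2N+1)^r$, while $\phi_j(E_N)$ is the image of an integer box of side $2N+1$ under the linear map $(c_1, \ldots, c_r) \mapsto \sum_i c_i \phi_j(b_i)$ into the finitely generated subgroup $\phi_j(H) \subim G_j$ of rank $\Rank{\phi_j(H)}$, giving $|\phi_j(E_N)| = O(N^{\Rank{\phi_j(H)}})$. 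Substituting into \eqref{BL;sets}, taking logarithms and letting $N \to \infty$ produces $r \le \sum_j s_j \Rank{\phi_j(H)}$, which is \eqref{subcriticalhypothesis2}.
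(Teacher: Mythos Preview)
Your proposal is correct and follows essentially the same route as the paper: the paper also reduces to Theorem~\ref{thm:field} over $\QQ$ via rationalization, handles the torsion in the $G_j$ by the same max-over-fibers pushforward (the paper's $\tilde f_j(y)=\max_{t\in T_j}f_j(y,t)$ is exactly your $g_j$), and proves the converse by the same box-counting with $E_N$. Your only cosmetic difference is that you perform the torsion reduction and the $\ZZ\hookrightarrow\QQ$ embedding in a single step via $-\otimes_\ZZ\QQ$, whereas the paper does them in two separate reductions.
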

\begin{remark} \label{rmk:BCCT10}
\cite[Theorem~2.4]{BCCT10} treats the more general situation in which
$G$ is not required to be torsion-free, and establishes the conclusion \eqref{BL} in the weaker form
\begin{equation} \label{BL-C}
\sum_{x\in G} \prod_{j=1}^m f_j(\phi_j(x)) \le C\prod_{j=1}^m \norm{f_j}_{1/s_j} \qquad  \text{for all functions } 0 \le f_j \in \ell^{1/s_j}(G_j).
\end{equation}
where the constant $C < \infty$ depends only on $G$ and $\set{\phi_1,\ldots,\phi_m}$. 
The constant $C=1$ established here is optimal.
Indeed, consider any single $x \in G$, and for each $j$ define $f_j$ to be $\indicator_{\phi_j(x)}$, the indicator function for $\phi_j(x)$.
Then both sides of the inequalities in \eqref{BL} are equal to $1$.
In Remark~\ref{rmk:BCCT10-T}, we will show that the constant $C$ is bounded above by the number of torsion elements of $G$.
\end{remark}
\begin{proof}[Proof of Theorem~\ref{thm:1}]
Theorem~\ref{thm:1} follows from Theorem~\ref{thm:2} by setting $G=\ZZ^d$ and $G_j=\ZZ^{d_j}$ for each $j$.
\end{proof}
\begin{proof}[Proof of Theorem~\ref{thm:2} (necessity)]
Necessity of \eqref{subcriticalhypothesis2} follows from an argument given in \cite[Theorem~2.4]{BCCT10}.

Consider any subgroup $H \subim G$. 
Let $r=\Rank{H}$. 
By definition of rank, there exists a set $\set{e_i}_{i=1}^r$ of elements of $H$ such that for any coefficients $m_i,n_i\in\ZZ$, $\sum_{i=1}^r m_i e_i = \sum_{i=1}^r n_ie_i$ only if $m_i=n_i$ for all $i$.
For any positive integer $N$ define $E_N$ to be the set of all elements of the form $\sum_{i=1}^r n_i e_i$, where each $n_i$ ranges freely over $\set{1,2,\ldots,N}$. 
Then $|E_N| = N^r$.

On the other hand, for $j\in\set{1,\ldots,m}$,
\begin{equation}\label{necessitycount} 
|\phi_j(E_N)|\le A_j N^{\Rank{\phi_j(H)}}, 
\end{equation}
where $A_j$ is a finite constant which depends on $\phi_j$, on the structure of $H_j$, and on the choice of $\set{e_i}$, but not on $N$.
Indeed, it follows from the definition of rank that for each $j$ it is possible to permute the indices $i$ so that for each $i>\Rank{\phi_j(H)}$ there exist integers $k_i$ and $\kappa_{i,l}$ such that 
\[
k_i \phi_j(e_i)  = \sum_{l=1}^{\Rank{\phi_j(H)}} \kappa_{i,l} \phi_j(e_l).
\]
The upper bound \eqref{necessitycount} follows from these relations.

Inequality \eqref{mainconclusion} therefore asserts that 
\[ 
N^{\Rank{H}} \le \prod_{j=1}^m A_j^{s_j} N^{s_j\Rank{\phi_j(H)}} = AN^{\sum_{j=1}^m s_j\Rank{\phi_j(H)}} 
\]
where $A<\infty$ is independent of $N$.
By letting $N$ tend to infinity, we conclude that $\Rank{H} \le {\sum_{j=1}^m s_j\Rank{\phi_j(H)}}$, as was to be shown.
\end{proof}

%
%

We show sufficiency in Theorem~\ref{thm:2} in its full generality is a consequence of the special case in which all of the groups $G_j$ are torsion-free. 
\begin{proof}[Reduction of Theorem~\ref{thm:2} (sufficiency) to the torsion-free case]
Let us suppose that the Abelian group HBL datum $\tup{G,\tup{G_j},\tup{\phi_j}}$ and $s$ satisfy \eqref{subcriticalhypothesis2}.
According to the structure theorem of finitely generated Abelian groups, each $G_j$ is isomorphic to $\tilde G_j \oplus T_j$ where $T_j$ is a finite group and $\tilde G_j$ is torsion-free.
Here $\oplus$ denotes the direct sum of Abelian groups; $G'\oplus G''$ is the Cartesian product $G'\times G''$ equipped with the natural componentwise group law.
Define $\pi_j \from \tilde G_j\oplus T_j\to\tilde G_j$ to be the natural projection; thus $\pi_j(x,t)=x$ for $\tup{x,t} \in \tilde G_j \times T_j$.
Define $\tilde \phi_j = \pi_j\circ\phi_j \from G \to \tilde G_j$.

If $K$ is a subgroup of $G_j$, then $\Rank{\pi_j(K)} = \Rank{K}$ since the kernel $T_j$ of $\pi_j$ is a finite group.
Therefore for any subgroup $H\subim G$, $\Rank{\tilde \phi_j(H)}=\Rank{\phi_j(H)}$.  
Therefore whenever $\tup{G,\tup{G_j},\tup{\phi_j}}$ and $s$ satisfy the hypotheses of Theorem~\ref{thm:2}, $\tup{G,\tup{\tilde G_j},\tup{\tilde \phi_j}}$ and $s$ also satisfy those same hypotheses.

Under these hypotheses, consider any $m$--tuple $f=\tup{f_j}$ from \eqref{BL}, and for each $j$, define $\tilde f_j$ by
\[ 
\tilde f_j(y) = \max_{t\in T_j} f_j(y,t), \text{ for } y\in \tilde G_j.  
\]
For any $x\in G$, $f_j(\phi_j(x))\le \tilde f_j(\tilde \phi(x))$.
Consequently $\prod_{j=1}^m f_j(\phi_j(x))\le\prod_{j=1}^m \tilde f_j(\tilde \phi_j(x))$.

We are assuming validity of Theorem~\ref{thm:2} in the torsion-free case. 
Its conclusion asserts that 
\[ \sum_{x\in G} \prod_{j=1}^m \tilde f_j(\tilde \phi_j(x)) \le \prod_{j=1}^m \norm{\tilde f_j}_{1/s_j}. \]
For each $j$, $\norm{\tilde f_j}_{1/s_j} \le \norm{f_j}_{1/s_j}$. 
This is obvious when $s_j=0$. 
If $s_j \ne 0$ then
\begin{align*}
\norm{\tilde f_j}_{1/s_j}^{1/s_j}
=\sum_{y\in \tilde G_j} \tilde f_j(y)^{1/s_j}
= \sum_{y\in \tilde G_j} \max_{t\in T_j}f_j(y,t)^{1/s_j}
\le \sum_{y\in \tilde G_j} \sum_{t\in T_j}f_j(y,t)^{1/s_j}
= \norm{f_j}_{1/s_j}^{1/s_j}.
\end{align*}
Combining these inequalities gives the conclusion of Theorem~\ref{thm:2}.
%
\end{proof}

Our second generalization of Theorem~\ref{thm:1} is as follows.
\begin{notation}
$\Dim{V}$ will denote the dimension of a vector space $V$ over a field $\FF$; all our vector spaces are finite-dimensional.
Similar to our notation for subgroups, $W \subim V$ indicates that $W$ is a subspace of $V$, and $W \subpr V$ indicates that $W$ is a proper subspace.
\end{notation}
\begin{definition}
\label{def:vectorspacedatum}
A \emph{vector space HBL datum} is a $3$--tuple 
\[ 
\dtup{V,\dtup{V_j},\dtup{\phi_j}} = \dtup{V,\dtup{V_1,\ldots,V_m},\dtup{\phi_1,\ldots,\phi_m}} 
\] 
where $V$ and $V_j$ are finite-dimensional vector spaces over a field $\FF$, $\phi_j\from V \to V_j$ is an $\FF$--linear map, and the notation on the left-hand side always implies that $j\in\set{1,2,\ldots,m}$.
\end{definition}
\begin{theorem} \label{thm:field}
Consider a vector space HBL datum $\tup{V,\tup{V_j},\tup{\phi_j}}$ and $s\in[0,1]^m$.
Suppose that
\begin{equation} \label{subcriticalhypothesisfield} 
\Dim{W} \le \sum_{j=1}^m s_j\Dim{\phi_j(W)} \qquad \text{for all subspaces } W \subim V.
\end{equation}
Then
\begin{equation} \label{BLfield}
\sum_{x\in V} \prod_{j=1}^m f_j(\phi_j(x)) \le\prod_{j=1}^m \norm{f_j}_{1/s_j} \qquad \text{for all functions } 0 \le f_j \in \ell^{1/s_j}(V_j).
\end{equation}
In particular,
\begin{equation} \label{BLfield;sets} 
|E| \le \prod_{j=1}^m |\phi_j(E)|^{1/s_j} \qquad\text{for all nonempty finite sets } E \subseteq V.
\end{equation}
Conversely, if \eqref{BLfield;sets} holds for $s\in[0,1]^m$, and if $\FF=\QQ$ or if $\FF$ is finite, then $s$ satisfies \eqref{subcriticalhypothesisfield}.
\end{theorem}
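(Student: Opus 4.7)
My plan is to establish the two directions of Theorem~\ref{thm:field} separately. Necessity (converse) is handled by direct dilation arguments parallel to those already given for Theorem~\ref{thm:2}; sufficiency is the main content and will proceed by first reducing the set inequality to the functional inequality, and then proving the latter by induction on $\Dim{V}$ using a critical subspace factorization.

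For necessity, when $\FF$ is finite, any subspace $W\subim V$ is itself a finite set with $|W|=|\FF|^{\Dim{W}}$ and $|\phi_j(W)|=|\FF|^{\Dim{\phi_j(W)}}$, so plugging $E=W$ into \eqref{BLfield;sets} and taking $\log_{|\FF|}$ of both sides yields \eqref{subcriticalhypothesisfield} directly. When $\FF=\QQ$, fix a basis $e_1,\ldots,e_r$ of $W$ and form the parallelepipeds $E_N=\set{\sum_i n_ie_i\from 1\le n_i\le N}$; as in the necessity proof of Theorem~\ref{thm:2}, $|E_N|=N^r$ while $|\phi_j(E_N)|\le A_jN^{\Dim{\phi_j(W)}}$ for constants $A_j$ independent of $N$, so inserting these into \eqref{BLfield;sets} and sending $N\to\infty$ once more yields \eqref{subcriticalhypothesisfield}.

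For sufficiency, the set inequality \eqref{BLfield;sets} follows from the functional inequality \eqref{BLfield} by plugging in $f_j=\indicator_{\phi_j(E)}$ and using $\norm{\indicator_{\phi_j(E)}}_{1/s_j}=|\phi_j(E)|^{s_j}$ for $s_j>0$ (factors with $s_j=0$ contribute trivially). I would prove \eqref{BLfield} by induction on $\Dim{V}$. The base case $\Dim{V}=0$ is immediate: the sum has a single term and $f_j(0)\le\norm{f_j}_\infty\le\norm{f_j}_{1/s_j}$ whenever $1/s_j\ge 1$. For the inductive step, I would seek a \emph{critical subspace}, i.e., a nonzero proper subspace $W$ of $V$ at which \eqref{subcriticalhypothesisfield} is saturated: $\Dim{W}=\sum_j s_j\Dim{\phi_j(W)}$. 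When such a $W$ exists, choose a complementary subspace $U$ so that $V=W\oplus U$, and for each $j$ a complementary subspace $U_j$ to $\phi_j(W)$ inside $V_j$; composing $\phi_j$ with projection onto $U_j$ then induces linear maps $\tilde\phi_j\from V/W\to U_j$. I would verify that both the restricted datum $\tup{W,\tup{\phi_j(W)},\tup{\phi_j|_W}}$ and the quotient datum $\tup{V/W,\tup{U_j},\tup{\tilde\phi_j}}$ satisfy \eqref{subcriticalhypothesisfield} with the same $s$ (the first automatically by restriction to subspaces of $W$, the second using the saturation at $W$ to ensure the dimension drops balance). Both data have strictly smaller ambient dimension, so induction yields \eqref{BLfield} for each. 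Parametrizing $x=u+w$ with $u\in U$, $w\in W$, and regarding each $f_j$ on $V_j=\phi_j(W)\oplus U_j$ as a family of functions on $\phi_j(W)$ indexed by $U_j$, I would combine the inner sum over $w$ (bounded by the inductive hypothesis for $W$) with the outer sum over $u$ (bounded by the inductive hypothesis for $V/W$) via an $m$--fold H\"older inequality to conclude \eqref{BLfield} for $V$.

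The principal obstacle will be handling the case in which no nonzero proper critical subspace exists, so that every such $W$ satisfies strict inequality in \eqref{subcriticalhypothesisfield}. My plan is to slightly decrease the exponent vector $s$ to some $s'\le s$ on the boundary of the feasibility polytope, at which a proper critical subspace does appear, apply the inductive argument to $s'$, and then deduce \eqref{BLfield} for $s$ from the statement at $s'$. For the set inequality this deduction is immediate by monotonicity in the exponents (since $|\phi_j(E)|\ge 1$, decreasing $s_j$ only tightens the bound); for the functional inequality, because $\ell^p$--norms are not monotone in $p$, I would route the reduction through a layered-cake decomposition that recovers the functional version from the set version in each level set, or alternatively through a tensor-power argument on $V^N$. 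Making this perturbation step rigorous, together with the careful rank-bookkeeping for the quotient datum in the critical-subspace step, will be the most delicate part of the argument.
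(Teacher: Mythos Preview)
Your necessity arguments and the critical-subspace factorization for sufficiency match the paper closely. The real divergence, and a genuine gap, is in the ``no proper critical subspace'' case.

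Your plan there is to decrease $s$ to some $s'\le s$ until a proper critical subspace appears. Two remarks. First, your stated obstacle is not one: for counting measure the $\ell^p$ norms \emph{are} monotone, namely $\norm{f}_q\le\norm{f}_p$ whenever $p\le q$, so $s'_j\le s_j$ gives $1/s'_j\ge 1/s_j$ and hence $\norm{f_j}_{1/s'_j}\le\norm{f_j}_{1/s_j}$; the functional inequality for $s'$ immediately implies it for $s$, with no layer-cake or tensor-power detour needed. Second, and this is the actual gap: decreasing $s$ until some subcriticality constraint binds may well make only $V$ itself critical, with every proper nonzero subspace still strictly subcritical. Your induction then has nothing to factor through, and you cannot decrease $s$ further without leaving $\PP$. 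You give no argument that a \emph{proper} critical subspace can always be produced this way, and in general it cannot.

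The paper circumvents this by a different route. It first uses multilinear Riesz--Th\"orin interpolation (Lemma~\ref{lem:atextremepoints}) to reduce \eqref{BLfield} to extreme points of $\PP$. It then proves (Proposition~\ref{prop:0or1}, via Lemmas~\ref{lemma:0}--\ref{lemma:3}) that at any extreme point either a proper nonzero critical subspace exists, or $s\in\set{0,1}^m$. The first alternative feeds into the factorization induction you describe (Lemma~\ref{lemma:criticalW}); the second is handled directly: with $K=\set{k:s_k=1}$, subcriticality of $\bigcap_{k\in K}\Kernel{\phi_k}$ forces this intersection to be $\set{0}$, whence $x\mapsto\tup{\phi_k(x)}_{k\in K}$ is injective and \eqref{BLfield} reduces to a product expansion (Lemma~\ref{lemma:exponentsallone}). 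This extreme-point classification is precisely the structural input your perturbation sketch lacks. A minor point on the factorization step: the outer sum over cosets is controlled by a second application of the inductive hypothesis, now on the quotient datum $\tup{V/W,\tup{V_j/\phi_j(W)},\tup{[\phi_j]}}$, rather than by an $m$--fold H\"older inequality.
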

The conclusions \eqref{BL} and \eqref{BLfield} remain valid for 
functions $f_j$ without the requirement that the $\ell^{1/s_j}$ norms are finite, under the convention that the product $0\cdot\infty$ is interpreted as zero whenever it arises. 
For then if any $f_j$ has infinite norm, then either the right-hand side is $\infty$ while the left-hand side is finite, or both sides are zero; in either case, the inequality holds. 

\begin{proof}[Proof of Theorem~\ref{thm:2} (sufficiency) in the torsion-free case]
Let us suppose that the Abelian group HBL datum $\tup{G,\tup{G_j},\tup{\phi_j}}$ and $s$ satisfy the hypothesis \eqref{subcriticalhypothesis2} of Theorem~\ref{thm:2}; furthermore suppose that each group $G_j$ is torsion-free. 
$G_j$ is isomorphic to $\ZZ^{d_j}$ where $d_j=\Rank{G_j}$. 
Likewise $G$ is isomorphic to $\ZZ^d$ where $d=\Rank{G}$.
Thus we may identify $\tup{G,\tup{G_j},\tup{\phi_j}}$ with $\tup{\ZZ^d,\tup{\ZZ^{d_j}},\tup{\tilde \phi_j}}$, where each homomorphism $\tilde\phi_j\from\ZZ^d\to\ZZ^{d_j}$ is obtained from $\phi_j$ by composition with these isomorphisms.
Defining scalar multiplication in the natural way (i.e., treating $\ZZ^d$ and and $\ZZ^{d_j}$ as $\ZZ$--modules), we represent each $\ZZ$--linear map $\tilde\phi_j$ by a matrix with integer entries.

Let $\FF=\QQ$.
Regard $\ZZ^d$ and $\ZZ^{d_j}$ as subsets of $\QQ^d$ and of $\QQ^{d_j}$, respectively. 
Consider the vector space HBL datum $\tup{V,\tup{V_j},\tup{\psi_j}}$ defined as follows.
Let $V=\QQ^d$ and $V_j = \QQ^{d_j}$, and let $\psi_j\from\QQ^d \to\QQ^{d_j}$ be a $\QQ$--linear map represented by the same integer matrix as $\tilde \phi_j$.


Observe that $\tup{V,\tup{V_j},\tup{\psi_j}}$ and $s$ satisfy the hypothesis \eqref{subcriticalhypothesisfield} of Theorem~\ref{thm:field}.
Given any subspace $W \subim V$, there exists a basis $S$ for $W$ over $\QQ$ which consists of elements of $\ZZ^d$. 
Define $H\subim \ZZ^d$ to be the subgroup generated by $S$ (over $\ZZ$).
Then $\Rank{H}=\Dim{W}$.  
Moreover, $\psi_j(W)$ equals the span over $\QQ$ of $\tilde \phi_j(H)$, and $\Dim{\psi_j(W)} = \Rank{\tilde \phi_j(H)}$.
The hypothesis $\Rank{H}\le \sum_{j=1}^m s_j \Rank{\tilde \phi_j(H)}$ is therefore equivalently written as $\Dim{W} \le \sum_{j=1}^m s_j \Dim{\psi_j(W)}$, which is \eqref{subcriticalhypothesisfield} for $W$.

Consider the $m$--tuple $f=\tup{f_j}$ corresponding to any inequality in \eqref{BL} for $\tup{\ZZ^d,\tup{\ZZ^{d_j}},\tup{\tilde \phi_j}}$ and $s$, and for each $j$ define $F_j$ by $F_j(y)=f_j(y)$ if $y\in\ZZ^{d_j}\subseteq\QQ^{d_j}$, and $F_j(y)=0$ otherwise.
%
%
By Theorem~\ref{thm:field}, 
\[
\sum_{x\in \ZZ^d} \prod_{j=1}^m f_j(\tilde \phi_j(x)) 
= \sum_{x\in \QQ^d} \prod_{j=1}^m F_j(\psi_j(x)) 
\le \prod_{j=1}^m \norm{F_j}_{1/s_j}
= \prod_{j=1}^m \norm{f_j}_{1/s_j}.
\]
Thus the conclusion \eqref{BL} of Theorem~\ref{thm:2} is satisfied.
%
%
\end{proof}

Conversely, it is possible to derive Theorem~\ref{thm:field} for the case $\FF=\QQ$ from the special case of Theorem~\ref{thm:2} in which $G=\ZZ^d$ and $G_j = \ZZ^{d_j}$ by similar reasoning involving multiplication by large integers to clear denominators.

\begin{remark} \label{rmk:BCCT10-T}
Suppose we relax our requirement of an Abelian group HBL datum (Definition~\ref{def:groupdatum}) that the finitely generated Abelian group $G$ is torsion-free; let us call this an HBL datum with torsion.
The torsion subgroup $T(G)$ of $G$ is the (finite) set of all elements $x\in G$ for which there exists $0\ne n\in\ZZ$ such that $nx=0$.
%
%
As mentioned in Remark~\ref{rmk:BCCT10}, it was shown in \cite[Theorem~2.4]{BCCT10} that for an HBL datum with torsion, the rank conditions \eqref{subcriticalhypothesis2} are necessary and sufficient for the existence of some finite constant $C$ such that \eqref{BL-C} holds.
%
A consequence of Theorem~\ref{thm:2} is a concrete upper bound for the constant $C$ in these inequalities.

\begin{theorem}
Consider $\tup{G,\tup{G_j},\tup{\phi_j}}$, an HBL datum with torsion, and $s\in[0,1]^m$.
If \eqref{subcriticalhypothesis2} holds, then \eqref{BL-C} holds with $C=|T(G)|$.
In particular,  
\begin{equation} \label{inequalitywithtorsion}
|E|\le |T(G)|\cdot \prod_{j=1}^m |\phi_j(E)|^{s_j} \qquad\text{for all nonempty finite sets } E \subseteq G.
\end{equation}
Conversely, if \eqref{inequalitywithtorsion} holds for $s\in[0,1]^m$, then $s$ satisfies \eqref{subcriticalhypothesis2}.
\end{theorem}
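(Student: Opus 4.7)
The plan is to reduce sufficiency to Theorem~\ref{thm:2} (the torsion-free case) via the structure theorem, and to handle necessity by a straightforward adaptation of the necessity argument already given for Theorem~\ref{thm:2}. The two statements \eqref{BL-C} with $C=|T(G)|$ and \eqref{inequalitywithtorsion} will be linked exactly as in Theorem~\ref{thm:2}: the first is the general weighted inequality, and the second follows by plugging in indicator functions $f_j = \indicator_{\phi_j(E)}$.

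For sufficiency, I would invoke the structure theorem to write $G \cong \tilde G\oplus T(G)$ with $\tilde G$ torsion-free, and denote elements of $G$ as pairs $(x,t)$. Define $\tilde\phi_j\from\tilde G\to G_j$ by $\tilde\phi_j(x)\ceq\phi_j(x,0)$, so that $\phi_j(x,t)=\tilde\phi_j(x)+c_j(t)$ for $c_j(t)\ceq\phi_j(0,t)$. Any subgroup $\tilde H\subim\tilde G$ embeds into $G$, and under this embedding $\phi_j(\tilde H)=\tilde\phi_j(\tilde H)$, so the hypothesis \eqref{subcriticalhypothesis2} transfers to the torsion-free datum $\tup{\tilde G,\tup{G_j},\tup{\tilde\phi_j}}$. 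Theorem~\ref{thm:2} then applies (its statement only requires the source group to be torsion-free; the codomains $G_j$ may still have torsion). For each fixed $t\in T(G)$, apply Theorem~\ref{thm:2} to the translated functions $g_j^t(y)\ceq f_j(y+c_j(t))$, whose $\ell^{1/s_j}$-norms equal those of $f_j$; this yields
\[
\sum_{x\in\tilde G}\prod_{j=1}^m f_j(\phi_j(x,t)) \le \prod_{j=1}^m\norm{f_j}_{1/s_j}.
\]
Summing over the $|T(G)|$ values of $t$ delivers \eqref{BL-C} with $C=|T(G)|$. The inequality \eqref{inequalitywithtorsion} for sets then follows from the standard indicator-function substitution, noting that $\sum_x \prod_j \indicator_{\phi_j(E)}(\phi_j(x))\ge |E|$ and $\norm{\indicator_{\phi_j(E)}}_{1/s_j}=|\phi_j(E)|^{s_j}$.

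For necessity, I would mimic the argument used in the proof of Theorem~\ref{thm:2}. Given a subgroup $H\subim G$ with $r=\Rank{H}$, choose $\ZZ$--linearly independent $e_1,\ldots,e_r\in H$ and form $E_N=\{\sum_i n_i e_i:1\le n_i\le N\}$ of size $N^r$. The same argument using integer relations among $\phi_j(e_i)$ shows $|\phi_j(E_N)|\le A_j N^{\Rank{\phi_j(H)}}$ for constants $A_j$ independent of $N$. Substituting into \eqref{inequalitywithtorsion} gives $N^r\le |T(G)|\cdot A\cdot N^{\sum_j s_j\Rank{\phi_j(H)}}$, and letting $N\to\infty$ yields \eqref{subcriticalhypothesis2}; the factor $|T(G)|$ is irrelevant since it is a constant.

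The main obstacle is essentially bookkeeping in the sufficiency step, specifically verifying that translating by $c_j(t)$ preserves $\ell^{1/s_j}$-norms (immediate) and that one does not lose the constant-$1$ bound in passing from $\tilde G$ to $G$; the price of losing it is exactly the factor $|T(G)|$, which cannot be improved without additional hypotheses because the torsion coordinates contribute that many cosets, each of which individually saturates the torsion-free bound.
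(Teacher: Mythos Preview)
Your proposal is correct and follows essentially the same route as the paper: decompose $G\cong\tilde G\oplus T(G)$, apply Theorem~\ref{thm:2} to the torsion-free datum $\tup{\tilde G,\tup{G_j},\tup{\restr{\phi_j}{\tilde G}}}$ with translated functions $g_j(\cdot)=f_j(\cdot+\phi_j(0,t))$, and sum over $t\in T(G)$; for necessity, rerun the $E_N$ argument with the harmless extra constant $|T(G)|$. The only cosmetic difference is that the paper restricts the necessity argument to $E\subseteq\tilde G$ (which suffices since ranks are unchanged under quotienting by torsion), while you work directly with arbitrary $H\le G$; both are fine.
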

\begin{proof}
To prove \eqref{BL-C}, express $G$ as $\tilde G\oplus T(G)$ where $\tilde G \le G$ is torsion-free.
Thus arbitrary elements $x\in G$ are expressed as $x=(\tilde x,t)$  with $\tilde x\in\tilde G$ and $t\in T(G)$.
Then $\tup{\tilde G,\tup{G_j},\tup{\restr{\phi_j}{\tilde G}}}$ is an Abelian group HBL datum (with $\tilde G$ torsion-free) to which Theorem~\ref{thm:2} can be applied. 
Consider any $t\in T(G)$. 
Define $g_j\from G_j\to[0,\infty)$ by $g_j(x_j) = f_j(x_j+\phi_j(0,t))$.
Then $f_j(\phi_j(y,t)) = f_j(\phi_j(y,0)+\phi_j(0,t)) = g_j(\phi_j(y,0))$, so
\begin{equation*}
\sum_{y\in\tilde G} \prod_{j=1}^m f_j(\phi_j(y,t))
= \sum_{y\in\tilde G} \prod_{j=1}^m g_j(\phi_j(y,0))
\le \prod_{j=1}^m \norm{\restr{g_j}{\phi_j(\tilde G)}}_{1/s_j}
\le \prod_{j=1}^m \norm{f_j}_{1/s_j}.
\end{equation*}
The first inequality is an application of Theorem~\ref{thm:2}.
Summation with respect to $t\in T(G)$ gives the required bound.

To show necessity, we consider just the inequalities \eqref{inequalitywithtorsion} corresponding to the subsets $E \subseteq \tilde G$ and follow the proof of the converse of Theorem~\ref{thm:2}, except substituting $A|T(G)|$ for $A$.
\end{proof}
The factor $|T(G)|$ cannot be improved if the groups $G_j$ are torsion free, or more generally if $T(G)$ is contained in the intersection of the kernels of all the homomorphisms $\phi_j$; this is seen by considering $E=T(G)$. 
\end{remark}

\subsection{The polytope $\PP$} \label{sec:P}
The constraints \eqref{subcriticalhypothesis2} and \eqref{subcriticalhypothesisfield} are encoded by convex polytopes in $[0,1]^m$.
\begin{definition} \label{def:P}
For any Abelian group HBL datum $\tup{G,\tup{G_j},\tup{\phi_j}}$, we denote the set of all $s\in[0,1]^m$ which satisfy \eqref{subcriticalhypothesis2} by $\PP\tup{G,\tup{G_j},\tup{\phi_j}}$.
For any vector space HBL datum $\tup{V,\tup{V_j},\tup{\phi_j}}$, we denote the set of all $s\in[0,1]^m$ which satisfy \eqref{subcriticalhypothesisfield} by $\PP\tup{V,\tup{V_j},\tup{\phi_j}}$.
\end{definition}
$\PP=\PP\tup{V,\tup{V_j},\tup{\phi_j}}$ is the subset of $\RR^m$ defined by the inequalities $0\le s_j\le 1$ for all $j$, and $r \le \sum_{j=1}^m s_j r_j$ where $\tup{r,r_1,\ldots,r_m}$ is any element of $\set{1,2,\ldots,\Dim{V}} \times \prod_{j=1}^m \set{0,1,\ldots,\Dim{\phi_j(V)}}$ for which there exists a subspace $W \subim V$ which satisfies $\Dim{W} = r$ and $\Dim{\phi_j(W)}=r_j$ for all $j$.
Although infinitely many candidate subspaces $W$ must potentially be considered in any calculation of $\PP$, there are fewer than $(\Dim{V}+1)^{m+1}$ tuples $\tup{r,r_1,\ldots,r_m}$ which generate the inequalities defining $\PP$.
Thus $\PP$ is a convex polytope with finitely many extreme points, and is equal to the convex hull of the set of all of these extreme points.
This discussion applies equally to $\PP\tup{G,\tup{G_j},\tup{\phi_j}}$.  

In the course of proving Theorem~\ref{thm:2} above, we established the following result. 
\begin{lemma} \label{lem:PZequalsPQ}
Let $\tup{G,\tup{G_j},\tup{\phi_j}}$ be Abelian group HBL datum, let $\tup{\ZZ^d,\tup{\ZZ^{d_j}},\tup{\tilde \phi_j}}$ be the associated datum with torsion-free codomains, and let $\tup{\QQ^d,\tup{\QQ^{d_j}},\tup{\psi_j}}$ be the associated vector space HBL datum.
Then 
\[
\PP\dtup{G,\dtup{G_j},\dtup{\phi_j}} = \PP\dtup{\ZZ^d,\dtup{\ZZ^{d_j}},\dtup{\tilde \phi_j}} = \PP\dtup{\QQ^d,\dtup{\QQ^{d_j}},\dtup{\psi_j}}.
\] 
\end{lemma}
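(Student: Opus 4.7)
The plan is to verify both equalities by showing that each is essentially a rewriting of the defining rank/dimension inequalities in \eqref{subcriticalhypothesis2} and \eqref{subcriticalhypothesisfield}; the technical content was already assembled in the proofs of Theorem~\ref{thm:2} above, and the task here is to extract and organize it cleanly. Throughout, $s\in[0,1]^m$ is arbitrary.

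For the first equality $\PP(G,(G_j),(\phi_j)) = \PP(\ZZ^d,(\ZZ^{d_j}),(\tilde\phi_j))$, I would first use that $G$ is torsion-free and finitely generated, so $G\cong\ZZ^d$ canonically, and subgroups of $G$ correspond bijectively to subgroups of $\ZZ^d$ with equal ranks. The only remaining point is that ranks on the right-hand side of \eqref{subcriticalhypothesis2} match between the two data, i.e.\ $\Rank{\phi_j(H)}=\Rank{\tilde\phi_j(H)}$ for every subgroup $H\subim\ZZ^d$. This is exactly what was shown in the torsion-reduction argument: $\tilde\phi_j=\pi_j\circ\phi_j$ where $\pi_j\from\tilde G_j\oplus T_j\to\tilde G_j$ is projection with finite kernel, so for any subgroup $K\subim G_j$ one has $\Rank{\pi_j(K)}=\Rank{K}$. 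Applying this to $K=\phi_j(H)$ gives the required identity, whence the two sets of constraints coincide and thus so do the two polytopes.

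For the second equality $\PP(\ZZ^d,(\ZZ^{d_j}),(\tilde\phi_j)) = \PP(\QQ^d,(\QQ^{d_j}),(\psi_j))$, I would set up a two-way correspondence between $\ZZ$-subgroups of $\ZZ^d$ and $\QQ$-subspaces of $\QQ^d$ that preserves the relevant ranks and dimensions. In one direction, given a subspace $W\subim\QQ^d$, choose a $\QQ$-basis $S\subseteq\ZZ^d$ of $W$ (clearing denominators if necessary) and let $H\subim\ZZ^d$ be the subgroup generated by $S$; this is precisely the construction already used in the torsion-free sufficiency proof, which established $\Rank{H}=\Dim{W}$ and $\Rank{\tilde\phi_j(H)}=\Dim{\psi_j(W)}$. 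Hence if $s$ satisfies the group constraints on $\ZZ^d$, then $\Dim{W}=\Rank{H}\le\sum_j s_j\Rank{\tilde\phi_j(H)}=\sum_j s_j\Dim{\psi_j(W)}$, i.e.\ $s$ satisfies the vector-space constraints. In the other direction, given a subgroup $H\subim\ZZ^d$, let $W\subim\QQ^d$ be the $\QQ$-linear span of $H$; any maximal $\ZZ$-linearly independent subset of $H$ is also $\QQ$-linearly independent in $W$ and spans $W$ over $\QQ$, so $\Dim{W}=\Rank{H}$, and the same argument applied to $\tilde\phi_j(H)\subseteq\ZZ^{d_j}$ shows $\Dim{\psi_j(W)}=\Rank{\tilde\phi_j(H)}$ (using that $\psi_j$ and $\tilde\phi_j$ are represented by the same integer matrix). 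Thus the vector-space constraints on $W$ imply the group constraints on $H$, and the reverse implication follows as well.

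Combining these two bijective rank-preserving correspondences yields equality of all three polytopes. There is no real obstacle beyond careful bookkeeping; the subtlest step is the ``clearing denominators'' argument that produces an integer basis of any $\QQ$-subspace $W$ with the correct rank behaviour under $\psi_j$, but this was effectively done already in the proof of Theorem~\ref{thm:2}, so the lemma reduces to citing that verification and noting it holds for every $s\in[0,1]^m$ uniformly.
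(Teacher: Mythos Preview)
Your proposal is correct and takes essentially the same approach as the paper: the paper simply states that the lemma was ``established in the course of proving Theorem~\ref{thm:2}'' and gives no separate argument, so your write-up is exactly the extraction and organization of those ingredients (the torsion-reduction identity $\Rank{\pi_j(K)}=\Rank{K}$ for the first equality, and the integer-basis construction $W\mapsto H$ for one direction of the second). You are slightly more explicit than the paper in also spelling out the reverse direction $H\mapsto W=\QQ\text{-span}(H)$, which the paper leaves implicit here but uses elsewhere (e.g.\ Lemma~\ref{rationalgroup}).
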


Now we prove Proposition~\ref{prop:maxexponent}, i.e., that there was no loss of generality in assuming each $s_j \le 1$.
\begin{proof}[Proof of Proposition~\ref{prop:maxexponent}]
Proposition~\ref{prop:maxexponent} concerns the case of Theorem~\ref{thm:1}, but in the course of its proof we will show that this result also applies to Theorems~\ref{thm:2} and~\ref{thm:field}.

We first show the result concerning \eqref{subcriticalhypothesis}, by considering instead the set of inequalities \eqref{subcriticalhypothesisfield}.
Suppose that a vector space HBL datum $\tup{V,\tup{V_j},\tup{\phi_j}}$ and $s\in[0,\infty)^m$ satisfy \eqref{subcriticalhypothesisfield}, and suppose that $s_k \ge 1$ for some $k$.
Define $t \in [0,\infty)^m$ by $t_j=s_j$ for $j \ne k$, and $t_k=1$.
Pick any subspace $W \subim V$; let $W' \ceq W \cap \Kernel{\phi_k}$ and let $U$ be a supplement of $W'$ in $W$, i.e., $W=U+W'$ and $\Dim{W}=\Dim{U}+\Dim{W'}$.
Since $\Dim{\phi_k(U)} = \Dim{U}$, 
\[ 0 \le \sum_{j \ne k} t_j \Dim{\phi_j(U)} = (t_k-1)\Dim{U} + \sum_{j \ne k} t_j \Dim{\phi_j(U)} = -\Dim{U} + \sum_{j =1}^m t_j \Dim{\phi_j(U)}; \]
since $s$ satisfies \eqref{subcriticalhypothesisfield} and $\Dim{\phi_k(W')}=0$, by \eqref{subcriticalhypothesisfield} applied to $W'$,
\[ \Dim{W'} \le \sum_{j=1}^m s_j \Dim{\phi_j(W')} = \sum_{j=1}^m t_j \Dim{\phi_j(W')}. \]
Combining these inequalities and noting that $\phi_j(U)$ is a supplement of $\phi_j(W')$ in $\phi_j(W)$ for each $j$, we conclude that $t$ also satisfies  \eqref{subcriticalhypothesisfield}.
Given an $m$--tuple $s$ with multiple components $s_k > 1$, we can consider each separately and apply the same reasoning to obtain an $m$--tuple $t\in[0,1]^m$ with $t_j = \min(s_j,1)$ for all $j$.
Our desired conclusion concerning \eqref{subcriticalhypothesis} follows from Lemma~\ref{lem:PZequalsPQ}, by considering the associated vector space HBL datum $\tup{\QQ^d,\tup{\QQ^{d_j}},\tup{\psi_j}}$ and noting that the lemma was established without assuming $s_j \le 1$.
(A similar conclusion can be obtained for \eqref{BL;sets}.)

Next, we show the result concerning \eqref{mainconclusion}. 
Consider the following more general situation.
Let $X,X_1,\ldots,X_m$ be sets and $\phi_j \from X \to X_j$ be functions for $j\in\set{1,\ldots,m}$. 
Let $s\in[0,\infty)^m$ with some $s_k \ge 1$, and suppose that $|E| \le \prod_{j=1}^m |\phi_j(E)|^{s_j}$ for any finite nonempty subset $E \subseteq X$. 
Fix one such set $E$.
For each $y \in \phi_k(E)$, let $E_y = \phi_k^{-1}(y) \cap E$, the preimage of $y$ under $\restr{\phi_k}{E}$; thus $|\phi_k(E_y)| = 1$.
By assumption, $|E_y| \le \prod_{j=1}^m |\phi_j(E_y)|^{s_j}$, so it follows that 
\[
|E_y| \le \prod_{j\ne k}|\phi_j(E_y)|^{s_j} \le \prod_{j\ne k} |\phi_j(E)|^{s_k} =  \prod_{j\ne k} |\phi_j(E)|^{t_k}.
\]
Since $E$ can be written as the union of disjoint sets $\bigcup_{y\in \phi_k(E)} E_y$, we obtain
\[ 
|E| = \sum_{y\in \phi_k(E)} |E_y| \le \sum_{y \in  \phi_k(E)} \prod_{j \ne k} |\phi_j(E)|^{t_k} = |\phi_k(E)| \cdot \prod_{j \ne k} |\phi_j(E)|^{t_k} =  \prod_{j =1}^m |\phi_j(E)|^{t_k}.
\]
Given an $m$--tuple $s$ satisfying with multiple components $s_k > 1$, we can consider each separately and apply the same reasoning to obtain an $m$--tuple $t\in[0,1]^m$ with $t_j = \min(s_j,1)$ for all $j$.
Our desired conclusion concerning \eqref{mainconclusion} follows by picking $\tup{X,\tup{X_j},\tup{\phi_j}} \ceq \tup{\ZZ^d,\tup{\ZZ^{d_j}},\tup{\phi_j}}$, and a similar conclusion can be obtained for \eqref{BL;sets} and \eqref{BLfield;sets}.

We claim that this result can be generalized to $\eqref{BL}$ and $\eqref{BLfield}$, based on a comment in \cite[Section~8]{BCCT10} that it generalizes in the weaker case of \eqref{BL-C}. 
\end{proof}


To prove Theorem~\ref{thm:field}, we show in \sectn{\ref{subsect:interpolation}} that if \eqref{BLfield} holds at each extreme point $s$ of $\PP$, then it holds for all $s\in\PP$.
Then in \sectn{\ref{subsect:criticalsubspaces}}, we show that when $s$ is an extreme point of $\PP$, the hypothesis \eqref{subcriticalhypothesisfield} can be restated in a special form.
Finally in \sectn{\ref{sec:thm:field-proof}} we prove Theorem~\ref{thm:field} (with restated hypothesis) when $s$ is any extreme point of $\PP$, thus proving the theorem for all $s\in\PP$.

\subsection{Interpolation between extreme points of $\PP$} \label{subsect:interpolation}
A reference for measure theory is \cite{halmos1974measure}.
Let $\tup{X_j,\mathcal{A}_j,\mu_j}$ be measure spaces for $j\in\set{1,2,\ldots,m}$, where each $\mu_j$ is a nonnegative measure on the $\sigma$--algebra $\mathcal{A}_j$.
Let $S_j$ be the set of all simple functions $f_j \from X_j\to\CC$.
Thus $S_j$ is the set of all $f_j \from X_j\to\CC$ which can be expressed in the form $\sum_i c_i \indicator_{E_i}$ where $c_i\in\CC$, $E_i\in\mathcal{A}_j$, $\mu_j(E_i)<\infty$, and the sum extends over finitely many indices $i$.

Let $T \from \prod_{j=1}^m \CC^{X_j} \to \CC$ be a multilinear map; i.e., for any $m$--tuple $f\in \prod_{j=1}^m \CC^{X_j}$ where $f_k=c_0f_{k,0}+c_1f_{k,1}$ for $c_0,c_1\in\CC$ and $f_{k,0},f_{k,1}\in\CC^{X_k}$,
%
\[
T(f)
= c_0T\dtup{f_1,\ldots,f_{k-1},f_{k,0},f_{k+1},\ldots,f_m} + c_1T\dtup{f_1,\ldots,f_{k-1},f_{k,1},f_{k+1},\ldots,f_m}.
\]
%

One multilinear extension of the Riesz-Th\"orin theorem states the following (see, e.g., \cite{bennettsharpley}).
\begin{proposition}[Multilinear Riesz--Th\"orin theorem]
\label{prop:rieszthorin}
Suppose that $p_0=\tup{p_{j,0}},p_1=\tup{p_{j,1}} \in [1,\infty]^m$. 
Suppose that there exist $A_0,A_1\in[0,\infty)$ such that
\begin{equation*}
\lt|T(f)\rt| \le A_0\prod_{j=1}^m \norm{f_j}_{p_{j,0}} \qquad\text{and}\qquad \lt|T(f)\rt| \le A_1\prod_{j=1}^m \norm{f_j}_{p_{j,1}} \qquad\text{for all } f \in \prod_{j=1}^m S_j.
\end{equation*}
For each $\theta\in(0,1)$ define exponents $p_{j,\theta}$ by
\begin{equation*}
\frac1{p_{j,\theta}} \ceq \frac{\theta}{p_{j,0}} + \frac{1-\theta}{p_{j,1}}.
\end{equation*}
Then for each $\theta\in(0,1)$,
\begin{equation*}
 \lt|T(f)\rt| \le A_0^\theta A_1^{1-\theta}\prod_{j=1}^m \norm{f_j}_{p_{j,\theta}} \qquad\text{for all } f \in \prod_{j=1}^m S_j.
 \end{equation*}
Here $\norm{f_j}_{p}=\norm{f_j}_{L^{p}(X_j,\mathcal{A}_j,\mu_j)}$.
\end{proposition}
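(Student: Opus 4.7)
The plan is to follow the classical complex-interpolation proof via the three lines lemma, adapted to the multilinear setting. By multilinearity, rescaling each $f_j$ lets us assume without loss of generality that $\norm{f_j}_{p_{j,\theta}}=1$; it then suffices to prove $|T(f)|\le A_0^\theta A_1^{1-\theta}$. Write each $f_j$ as a finite sum $f_j=\sum_k \alpha_{j,k}\indicator_{E_{j,k}}$ with disjoint measurable sets $E_{j,k}$ of finite $\mu_j$-measure and complex coefficients $\alpha_{j,k}$, splitting each $\alpha_{j,k}$ as $|\alpha_{j,k}|\cdot\omega_{j,k}$ with $|\omega_{j,k}|=1$.

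Next I introduce the complex interpolation parameter. For each $j$ define a holomorphic exponent $p_j(z)$ on the closed strip $S=\dset{z\in\CC\from 0\le\mathrm{Re}(z)\le1}$ by $1/p_j(z)\ceq z/p_{j,0}+(1-z)/p_{j,1}$, so that $p_j(\theta)=p_{j,\theta}$, and set
\[
f_{j,z}(x) \ceq \sum_k |\alpha_{j,k}|^{p_{j,\theta}/p_j(z)}\,\omega_{j,k}\,\indicator_{E_{j,k}}(x),
\]
interpreting the exponent on the principal branch (with the convention that the exponent vanishes when $p_j(z)=\infty$). Then $f_{j,\theta}=f_j$, and $F(z)\ceq T(f_{1,z},\ldots,f_{m,z})$ is, by multilinearity, a finite linear combination of exponentials of $z$ with coefficients of the form $T(\indicator_{E_{1,k_1}},\ldots,\indicator_{E_{m,k_m}})$. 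Hence $F$ is entire, and bounded on $S$ because each $|f_{j,z}(x)|$ is bounded uniformly in $z\in S$ and $x$.

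The key computation is on the boundary of $S$. For $z=iy$ one has $\mathrm{Re}(1/p_j(iy))=1/p_{j,1}$, so $\mathrm{Re}(p_{j,\theta}/p_j(iy))=p_{j,\theta}/p_{j,1}$, and since $|a^c|=a^{\mathrm{Re}(c)}$ for $a>0$ the disjointness of the $E_{j,k}$ yields
\[
\norm{f_{j,iy}}_{p_{j,1}}^{p_{j,1}} = \sum_k |\alpha_{j,k}|^{p_{j,\theta}}\,\mu_j(E_{j,k}) = \norm{f_j}_{p_{j,\theta}}^{p_{j,\theta}} = 1.
\]
The hypothesis then gives $|F(iy)|\le A_1$. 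The analogous computation at $z=1+iy$ gives $\norm{f_{j,1+iy}}_{p_{j,0}}=1$ and $|F(1+iy)|\le A_0$. Applying the Hadamard three lines lemma to the bounded holomorphic function $F$ on $S$ yields $|F(\theta)|\le A_1^{1-\theta}A_0^\theta$, and since $F(\theta)=T(f)$ this is the desired bound.

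The only delicate point is handling endpoint exponents equal to $\infty$, where $1/p_j(z)$ may vanish or $p_{j,\theta}/p_j(z)$ be constant; this is resolved by the convention that a zero exponent makes the corresponding factor $|\alpha_{j,k}|^0$ equal to $1$ (so $f_{j,z}$ collapses to $\sum_k\omega_{j,k}\indicator_{E_{j,k}}$, uniformly bounded in sup norm), which keeps $F$ holomorphic and bounded and preserves the boundary norm identities above. No further obstacle arises because both the finiteness of each $\mu_j(E_{j,k})$ and the fact that only finitely many terms appear make every sum absolutely convergent.
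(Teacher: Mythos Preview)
The paper does not supply its own proof of this proposition: it is quoted as a known result, with a reference to the literature (Bennett--Sharpley). So there is nothing to compare against on the paper's side.

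Your argument is the standard complex-interpolation proof and is essentially correct. One small slip: in the degenerate case $p_{j,0}=p_{j,1}=\infty$ (hence $p_{j,\theta}=\infty$), your convention ``the exponent vanishes when $p_j(z)=\infty$'' makes $f_{j,z}=\sum_k\omega_{j,k}\indicator_{E_{j,k}}$ for all $z$, which is not $f_j$ at $z=\theta$ unless every $|\alpha_{j,k}|=1$; thus the identification $F(\theta)=T(f)$ breaks down. The fix is immediate: for any index $j$ with $p_{j,0}=p_{j,1}$ (in particular both $\infty$), simply set $f_{j,z}\equiv f_j$; the factor then contributes $\norm{f_j}_{p_{j,0}}=\norm{f_j}_{p_{j,1}}=1$ on both boundary lines and $F$ remains holomorphic and bounded. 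With that adjustment the three-lines argument goes through exactly as you wrote it.
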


In the context of Theorem~\ref{thm:field} with  vector space HBL datum $\tup{V,\tup{V_j},\tup{\phi_j}}$, we consider the multilinear map 
\[ T(f) \ceq \sum_{x\in V} \prod_{j=1}^m f_j(\phi_j(x)) \]
representing the left-hand side in \eqref{BLfield}.
\begin{lemma}
\label{lem:atextremepoints}
If \eqref{BLfield} holds for every extreme point of $\PP$, then it holds for every $s\in\PP$.
\end{lemma}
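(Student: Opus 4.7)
The plan is to apply the multilinear Riesz--Th\"orin theorem (Proposition~\ref{prop:rieszthorin}) to the $m$--linear functional
\[
T(f) = \sum_{x\in V} \prod_{j=1}^m f_j(\phi_j(x)),
\]
taking each $V_j$ to be a measure space equipped with counting measure on the power-set $\sigma$--algebra, so that the norms appearing in Proposition~\ref{prop:rieszthorin} coincide with $\|\cdot\|_{\ell^{p}(V_j)}$ as defined earlier. With this identification, the inequality \eqref{BLfield} at a given $s\in[0,1]^m$ is exactly the assertion that $|T(f)|\le 1\cdot\prod_{j=1}^m \|f_j\|_{1/s_j}$ for all nonnegative simple $f_j$, extended by approximation to general nonnegative $f_j \in \ell^{1/s_j}(V_j)$.

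As noted after Definition~\ref{def:P}, $\PP$ is a convex polytope with finitely many extreme points, say $s^{(1)},\ldots,s^{(K)}$. Every $s\in\PP$ then admits a representation $s = \sum_{k=1}^K \theta_k s^{(k)}$ with $\theta_k \ge 0$ and $\sum_k \theta_k = 1$. I would induct on the number of indices $k$ with $\theta_k>0$; the base case (one extreme point) is the hypothesis of the lemma. For the inductive step, write $s = \theta_1 s^{(1)} + (1-\theta_1) s'$, where $s' = (1-\theta_1)^{-1}\sum_{k\ge 2} \theta_k s^{(k)}$ is a convex combination of strictly fewer extreme points; the inductive hypothesis gives \eqref{BLfield} at $s'$. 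Apply Proposition~\ref{prop:rieszthorin} with $p_{j,0}=1/s^{(1)}_j$, $p_{j,1}=1/s'_j$, $A_0 = A_1 = 1$, and $\theta = \theta_1$; noting that $1/p_{j,\theta} = \theta s^{(1)}_j + (1-\theta) s'_j = s_j$, the conclusion of Proposition~\ref{prop:rieszthorin} is $|T(f)| \le A_0^\theta A_1^{1-\theta}\prod_j \|f_j\|_{1/s_j} = \prod_j \|f_j\|_{1/s_j}$ on $m$--tuples of simple functions, which is \eqref{BLfield} at $s$ restricted to simples.

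The extension from simple nonnegative functions to arbitrary $0 \le f_j \in \ell^{1/s_j}(V_j)$ is routine: truncating $f_j$ to larger and larger finite subsets of $V_j$ produces an increasing sequence of simple functions whose $\ell^{1/s_j}$--norms converge by monotone convergence, and whose pullbacks $f_j \circ \phi_j$ increase pointwise to $f_j \circ \phi_j$, so monotone convergence carries the inequality through on both sides. The main obstacle I foresee is purely bookkeeping: checking that the exponent vectors lie in $[1,\infty]^m$ so that Proposition~\ref{prop:rieszthorin} applies (they do, since $s^{(k)} \in [0,1]^m$ forces $1/s^{(k)}_j \in [1,\infty]$), and verifying that endpoint cases where some $s_j = 0$ (so $\|f_j\|_\infty = \sup|f_j|$ appears) are accommodated in the standard formulation of the multilinear Riesz--Th\"orin theorem, as they are.
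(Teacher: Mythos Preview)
Your proposal is correct and takes essentially the same approach as the paper: apply the multilinear Riesz--Th\"orin theorem (Proposition~\ref{prop:rieszthorin}) with $A_0=A_1=1$ to interpolate between extreme points of $\PP$, then pass from simple functions to general nonnegative $f_j$ by monotone convergence. Your induction on the number of extreme points in the convex combination makes explicit a step the paper leaves implicit; conversely, the paper spells out one small point you elide, namely that the hypothesis of Proposition~\ref{prop:rieszthorin} requires the bound for all (complex-valued) simple $\tilde f_j$, which follows from the nonnegative case via $|T(\tilde f)|\le T(|\tilde f_1|,\ldots,|\tilde f_m|)$.
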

\begin{proof}
For any $\tilde f\in \prod_{j=1}^m S_j$, we define another $m$--tuple $f$ where for each $j$,  $f_j = |\tilde f_j|$ is a nonnegative simple function.
By hypothesis, the inequality in \eqref{BLfield} corresponding to $f$ holds at every extreme point $s$ of $\PP$, giving
\[
\lt| T(\tilde f)\rt| 
\le \sum_{x\in V} \prod_{j=1}^m \lt|\tilde f_j(\phi_j(x))\rt| 
= \sum_{x\in V} \prod_{j=1}^m f_j(\phi_j(x)) 
\le \prod_{j=1}^m \norm{f_j}_{1/s_j} 
= \prod_{j=1}^m \norm{\tilde f_j}_{1/s_j}.
\]
As a consequence of Proposition~\ref{prop:rieszthorin} (with constants $A_i=1$), and the fact that any $s\in\PP$ is a finite convex combination of the extreme points, this expression holds for any $s\in\PP$.
For any nonnegative function $F_j$ (e.g., in $\ell^{1/s_j}(V_j)$), there is an increasing sequence of nonnegative simple functions $f_j$ whose (pointwise) limit is $F_j$. 
Consider the $m$--tuple $F=\tup{F_j}$ corresponding to any inequality in \eqref{BLfield}, and consider a sequence of $m$--tuples $f$ which converge to $F$; then $\prod_{j=1}^m f_j$ also converges to $\prod_{j=1}^m F_j$. 
So by the monotone convergence theorem, the summations on both sides of the inequality converge as well.
\end{proof}

\subsection{Critical subspaces and extreme points} 
\label{subsect:criticalsubspaces}
Assume a fixed vector space HBL datum $\tup{V,\tup{V_j},\tup{\phi_j}}$, and let $\PP$ continue to denote the set of all $s\in[0,1]^m$ which satisfy \eqref{subcriticalhypothesisfield}.
\begin{definition}
Consider any $s\in[0,1]^m$.
A subspace $W \subim V$ satisfying $\Dim{W}=\sum_{j=1}^m s_j\Dim{\phi_j(W)}$ is said to be a critical subspace; one satisfying $\Dim{W}\le \sum_{j=1}^m s_j\Dim{\phi_j(W)}$ is said to be subcritical; and a subspace satisfying $\Dim{W} > \sum_{j=1}^m s_j\Dim{\phi_j(W)}$ is said to be supercritical.
$W$ is said to be strictly subcritical if $\Dim{W}<\sum_{j=1}^m s_j\Dim{\phi_j(W)}$.
\end{definition}
In this language, the conditions \eqref{subcriticalhypothesisfield} assert that that every subspace $W$ of $V$, including $\set{0}$ and $V$ itself, is subcritical; equivalently, there are no supercritical subspaces.  
When more than one $m$--tuple $s$ is under discussion, we sometimes say that $W$ is critical, subcritical, supercritical or strictly subcritical with respect to $s$.

The goal of \sectn{\ref{subsect:criticalsubspaces}} is to establish the following:
\begin{proposition} 
\label{prop:0or1}
Let $s$ be an extreme point of $\PP$.
Then some subspace $\set{0} \subpr W \subpr V$ is critical with respect to $s$, or $s\in\set{0,1}^m$.
\end{proposition}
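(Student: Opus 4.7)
The plan is a proof by contradiction. Suppose $s$ is an extreme point of $\PP$, no subspace $\set{0} \subpr W \subpr V$ is critical, and $s \notin \set{0,1}^m$. Let $J^* \ceq \set{j : 0 < s_j < 1}$ (nonempty by assumption) and $J_1 \ceq \set{j : s_j = 1}$; I will derive a contradiction. The first observation is an extremality-based structural one: the normals of the active constraints at $s$ must span $\RR^m$. The active box constraints $s_j \in \set{0,1}$ for $j \notin J^*$ supply the coordinate normals $\pm e_j$ covering $\RR^{\set{1,\ldots,m}\setminus J^*}$, and by assumption the only additional active subspace constraints can come from $W = \set{0}$ (whose normal is zero and contributes nothing) or $W = V$. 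So the $V$-constraint $\Dim{V} = \sum_j s_j \Dim{\phi_j(V)}$ must be tight, and since at most one extra independent normal is contributed, $|J^*| = 1$, with $\Dim{\phi_{j^*}(V)} > 0$ for the unique $j^* \in J^*$.

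The second step is a hyperplane computation. Write $n \ceq \Dim{V}$ and $n_j \ceq \Dim{\phi_j(V)}$, so $V$-criticality reads $n = \sum_j s_j n_j$. For any hyperplane $W \subpr V$ (assuming $n \ge 2$), strict subcriticality $n-1 < \sum_j s_j \Dim{\phi_j(W)}$ subtracted from the $V$-criticality equation yields $\sum_j s_j \delta_j(W) < 1$, where $\delta_j(W) \ceq n_j - \Dim{\phi_j(W)}$. A rank--nullity count, comparing $\Dim{W} = \Dim{W \cap \Kernel{\phi_j}} + \Dim{\phi_j(W)}$ with $\Dim{V} = \Dim{\Kernel{\phi_j}} + n_j$ and using $\Dim{V/W} = 1$, shows $\delta_j(W) \in \set{0,1}$, and moreover that $\delta_j(W) = 1$ precisely when $\Kernel{\phi_j} \subim W$.

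The contradiction follows in the third step. Fix any $j \in J_1$; if $\Kernel{\phi_j} \ne V$, extend a basis of $\Kernel{\phi_j}$ to a basis of $V$ and drop one extra vector to produce a hyperplane $W \supseteq \Kernel{\phi_j}$, so $\delta_j(W) = 1$ and the single term $s_j \delta_j(W) = 1$ already violates $\sum_j s_j \delta_j(W) < 1$. Therefore $\phi_j = 0$ and $n_j = 0$ for every $j \in J_1$, so the $V$-criticality equation collapses to $n = s_{j^*} n_{j^*}$; but $s_{j^*} \in (0,1)$ combined with $n_{j^*} \le n$ makes this impossible. The degenerate cases $n \le 1$ are handled by inspection: there are no hyperplanes to run the argument on, but there are also no subspaces $\set{0} \subpr W \subpr V$ at all, and the extreme points of $\PP$ are readily checked to lie in $\set{0,1}^m$.

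The main obstacle I anticipate is the first step: turning the qualitative statement ``no proper nontrivial subspace is critical'' into the clean quantitative consequence ``the $V$-constraint is tight and $|J^*| \le 1$'' requires careful bookkeeping of the active-constraint normals, together with a clear separation between the trivial $W = \set{0}$ constraint (always tight but with zero gradient) and the genuinely informative ones. The hyperplane dimension identity in Step 2 and the contradiction in Step 3 are then essentially mechanical.
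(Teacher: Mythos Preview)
Your proof is correct. Your Step~1 recovers, via the standard vertex characterization (active constraint normals must span $\RR^m$), exactly what the paper proves through explicit perturbation in Lemmas~\ref{lemma:0}, \ref{lemma:2}, and \ref{lemma:1}: that $V$ must be critical and $|J^*|\le 1$, with $\Dim{\phi_{j^*}(V)}>0$. These two arguments are interchangeable.

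The genuine divergence is in the final contradiction. The paper (Lemma~\ref{lemma:3}) constructs a supercritical subspace directly: with $K=J_1$ and $W=\bigcap_{k\in K}\Kernel{\phi_k}$, criticality of $V$ together with $s_{j^*}\Dim{\phi_{j^*}(V)}>0$ forces $\sum_{k\in K}\Dim{\phi_k(V)}<\Dim V$, hence $\Dim W>0$; and then $\sum_j s_j\Dim{\phi_j(W)}=s_{j^*}\Dim{\phi_{j^*}(W)}\le s_{j^*}\Dim W<\Dim W$. Your route is dual: rather than intersecting kernels, you test each $\Kernel{\phi_j}$ for $j\in J_1$ against a containing hyperplane and use the identity $\delta_j(W)=1\Leftrightarrow\Kernel{\phi_j}\subseteq W$ to force $\phi_j=0$, after which $V$-criticality reads $n=s_{j^*}n_{j^*}<n$. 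Both arguments exploit the same tension---the indices in $J_1$ cannot contribute enough dimension to make $V$ critical once $s_{j^*}<1$---but the paper's construction hands you the offending subspace in one stroke, whereas yours trades that for a clean rank--nullity identity on hyperplanes and the slightly sharper (though vacuous, since the hypotheses are contradictory) conclusion that every $\phi_j$ with $s_j=1$ would have to vanish identically.
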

\noindent Note that these two possibilities are not mutually exclusive.
%
%
\begin{lemma} 
\label{lemma:0}
If $s$ is an extreme point of $\PP$, and if $i$ is an index for which $s_i\notin\set{0,1}$, then $\Dim{\phi_i(V)}\ne 0$.
\end{lemma}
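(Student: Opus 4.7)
The plan is to prove the contrapositive: assume $\Dim{\phi_i(V)}=0$ and show that every $s\in\PP$ that is extreme must have $s_i\in\set{0,1}$.

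If $\Dim{\phi_i(V)}=0$ then $\phi_i$ is the zero map, so for every subspace $W\subim V$ we have $\Dim{\phi_i(W)}=0$. This means the index $i$ contributes nothing to the right-hand side of any rank inequality in \eqref{subcriticalhypothesisfield}. Consequently the coordinate $s_i$ appears in the description of $\PP$ only through the box constraint $0\le s_i\le 1$, and not through any of the nontrivial subspace inequalities.

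Now suppose for contradiction that $s$ is an extreme point of $\PP$ with $s_i\in(0,1)$. Let $e_i\in\RR^m$ be the $i$-th standard basis vector, and choose $\eps>0$ small enough that $s_i-\eps\ge 0$ and $s_i+\eps\le 1$. Then both $s^\pm\ceq s\pm\eps e_i$ lie in $[0,1]^m$. Since the value of $s_i$ is irrelevant to every subspace inequality, $s^+$ and $s^-$ satisfy \eqref{subcriticalhypothesisfield}, so $s^\pm\in\PP$. But $s=\tfrac12 s^+ + \tfrac12 s^-$, with $s^+\ne s^-$, contradicting the assumption that $s$ is extreme.

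This argument is essentially a one-step observation once the correct contrapositive is identified, so there is no real obstacle; the only thing to be careful about is verifying that the box constraint is the only place $s_i$ enters when $\phi_i\equiv 0$, which follows immediately from the definition of $\PP$ in \Sectn{\ref{sec:P}}.
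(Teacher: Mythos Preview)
Your proof is correct and essentially identical to the paper's: both observe that when $\Dim{\phi_i(V)}=0$ the coordinate $s_i$ is unconstrained by the subspace inequalities, and then perturb $s_i$ within $(0,1)$ to contradict extremality. The paper phrases this as ``any $t$ with $t_j=s_j$ for $j\ne i$ lies in $\PP$,'' while you make the convex combination $s=\tfrac12(s^++s^-)$ explicit, but the argument is the same.
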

\begin{proof}
Suppose $\Dim{\phi_i(V)}=0$.
If $t\in[0,1]^m$ satisfies $t_j=s_j$ for all $j\ne i$, then $\sum_{j=1}^m t_j\Dim{\phi_j(W)} = \sum_{j=1}^m s_j \Dim{\phi_j(W)}$ for all subspaces $W \subim V$, so $t\in\PP$ as well.
If $s_i\notin\set{0,1}$, then this contradicts the assumption that $s$ is an extreme point of $\PP$. 
\end{proof}
\begin{lemma} 
\label{lemma:2}
Let $s$ be an extreme point of $\PP$.
Suppose that no subspace $\set{0} \subpr W \subim V$ is critical with respect to $s$. 
Then $s\in\set{0,1}^m$.
\end{lemma}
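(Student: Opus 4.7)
The plan is to argue by contrapositive: assume some coordinate $s_k \in (0,1)$ and produce two distinct points of $\PP$ whose midpoint is $s$, contradicting extremality. The natural candidates are the axis-aligned perturbations $t_{\pm} \ceq s \pm \epsilon e_k$ for a sufficiently small $\epsilon > 0$, where $e_k$ denotes the $k$th standard basis vector. Showing that both perturbations lie in $\PP$ will establish the conclusion $s \in \set{0,1}^m$.

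The verification splits into two parts. The box constraints $0 \le (t_{\pm})_j \le 1$ follow from $s_k \in (0,1)$ as soon as $\epsilon \le \min(s_k,\,1-s_k)$; the constraints for $j \ne k$ are unchanged from $s$. The substantive check concerns the dimension inequalities $\Dim{W} \le \sum_j (t_{\pm})_j \Dim{\phi_j(W)}$ for subspaces $W \subim V$. The trivial subspace $W = \set{0}$ yields $0 \le 0$, unaffected by the perturbation. For any $W$ with $\set{0} \subpr W \subim V$, the hypothesis of the lemma forces strict subcriticality at $s$, so the slack $\delta_W \ceq \sum_j s_j \Dim{\phi_j(W)} - \Dim{W}$ is strictly positive; the perturbation alters the right-hand side by at most $\epsilon \Dim{\phi_k(W)} \le \epsilon \Dim{V}$.

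The one step where care is needed, and which I anticipate is the main subtle point, is ensuring that a single $\epsilon$ works uniformly across the infinitely many subspaces $W$. Here I would invoke the observation recorded at the start of \sectn{\ref{sec:P}} that the subspace constraints depend on $W$ only through the tuple $\tup{\Dim{W},\Dim{\phi_1(W)},\ldots,\Dim{\phi_m(W)}}$, and therefore reduce to at most $(\Dim{V}+1)^{m+1}$ distinct linear inequalities. Partitioning the nonzero subspaces by this tuple shows that the set of slacks $\set{\delta_W : W \ne \set{0}}$ attains a positive infimum $\delta > 0$. Choosing $\epsilon$ small enough that $\epsilon \Dim{V} < \delta$ and $\epsilon < \min(s_k,\,1-s_k)$ then places both $t_{\pm}$ in $\PP$, yielding the decomposition $s = \tfrac{1}{2}(t_+ + t_-)$ with $t_+ \ne t_-$ and contradicting extremality of $s$. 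Hence no coordinate of $s$ can lie strictly between $0$ and $1$, proving $s \in \set{0,1}^m$.
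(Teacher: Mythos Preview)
Your proposal is correct and follows the same approach as the paper: perturb a coordinate $s_k\in(0,1)$ along the $k$th axis to exhibit $s$ as the midpoint of two distinct points of $\PP$, contradicting extremality. The paper's own proof is in fact a three-line sketch of precisely this idea, and you have simply filled in the details (the finiteness of the distinct dimension-tuples to get a uniform positive slack) that the paper leaves implicit.
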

\begin{proof}
Suppose to the contrary that for some index $i$, $s_i\notin\set{0,1}$. 
If $t\in[0,1]^m$ satisfies $t_j=s_j$ for all $j\ne i$ and if $t_i$ is sufficiently close to $s_i$, then $t\in\PP$. 
This again contradicts the assumption that $s$ is an extreme point.
\end{proof}
\begin{lemma} 
\label{lemma:1}
Let $s$ be an extreme point of $\PP$.
Suppose that there exists no subspace $\set{0} \subpr W \subpr V$ which is critical with respect to $s$.
Then there exists at most one index $i$ for which $s_i\notin\set{0,1}$.
\end{lemma}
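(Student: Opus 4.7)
The plan is to argue by contradiction. Assume $s$ is an extreme point of $\PP$ and that no subspace $\set{0} \subpr W \subpr V$ is critical, yet there exist two distinct indices $i \ne k$ with $s_i, s_k \in (0,1)$. I will exhibit a nonzero direction $v \in \RR^m$ such that $s \pm \varepsilon v \in \PP$ for all sufficiently small $\varepsilon > 0$, contradicting extremality. The argument splits naturally according to whether the full space $V$ itself is critical with respect to $s$.

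If $V$ is not critical, then combined with the hypothesis of the present lemma, no subspace $\set{0} \subpr W \subim V$ is critical at all. Lemma~\ref{lemma:2} then forces $s \in \set{0,1}^m$, directly contradicting $s_i \in (0,1)$.

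If $V$ is critical, the linear equation $\Dim{V} = \sum_{j=1}^m s_j \Dim{\phi_j(V)}$ is active. By Lemma~\ref{lemma:0} applied to the indices $i$ and $k$, both $\Dim{\phi_i(V)}$ and $\Dim{\phi_k(V)}$ are strictly positive. I would then define $v \in \RR^m$ by $v_i = \Dim{\phi_k(V)}$, $v_k = -\Dim{\phi_i(V)}$, and $v_j = 0$ for $j \notin \set{i,k}$. Then $v$ is nonzero and satisfies $\sum_j v_j \Dim{\phi_j(V)} = 0$, so the criticality equation for $V$ is exactly preserved under any perturbation $t = s \pm \varepsilon v$.

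It remains to verify that every other defining inequality of $\PP$ continues to hold at $t$ for small $\varepsilon > 0$. The box constraints $0 \le t_j \le 1$ survive because $s_i, s_k$ lie strictly inside $(0,1)$ while the other coordinates are unchanged. For the subspace inequalities, every $W$ with $\set{0} \subpr W \subpr V$ is strictly subcritical by hypothesis, and the inequality for $W = \set{0}$ is trivial; because the defining inequalities of $\PP$ depend on $W$ only through the finitely many distinct rank profiles $\tup{\Dim{W},\Dim{\phi_1(W)},\ldots,\Dim{\phi_m(W)}}$ (as observed in the discussion following Definition~\ref{def:P}), there is a uniform positive slack that survives small enough perturbations. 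I do not anticipate a serious obstacle here; the main subtlety is simply recognizing that, under the lemma's hypothesis, criticality of $V$ is the only potential binding equation, and that Lemma~\ref{lemma:0} is exactly what guarantees the perturbation direction $v$ is genuinely nonzero in both coordinates.
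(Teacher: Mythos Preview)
Your proposal is correct and is essentially the paper's own argument: the same perturbation direction $v$ (with $v_i=\Dim{\phi_k(V)}$, $v_k=-\Dim{\phi_i(V)}$, other coordinates zero), the same appeal to Lemma~\ref{lemma:0} for nontriviality, and the same finiteness-of-rank-profiles observation to preserve the strict proper-subspace inequalities. The only difference is that the paper skips your case split: since $\sum_j v_j\Dim{\phi_j(V)}=0$, the $V$--inequality is preserved exactly whether or not $V$ was critical, so one need not invoke Lemma~\ref{lemma:2} separately for the noncritical-$V$ case.
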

\begin{proof}
Suppose to the contrary that there were to exist distinct indices $k,l$ such that neither of $s_k,s_l$ belongs to $\set{0,1}$.
By Lemma~\ref{lemma:0}, both $\phi_k(V)$ and $\phi_l(V)$ have positive dimensions.
For $\eps\in\RR$ define $t$ by $t_j=s_j$ for all $j\notin\set{k,l}$,
\begin{equation*} 
t_k = s_k+\eps \Dim{\phi_l(V)} \text{ and } t_l = s_l-\eps \Dim{\phi_k(V)}.  
\end{equation*}
Whenever $|\eps|$ is sufficiently small, $t\in[0,1]^m$.
Moreover, $V$ remains subcritical with respect to $t$. 
If $|\eps|$ is sufficiently small, then every subspace $\set{0} \subpr W \subpr V$ remains strictly subcritical with respect to $t$, because the set of all parameters $\tup{\Dim{W},\Dim{\phi_1(W)},\ldots,\Dim{\phi_m(W)}}$ which arise, is finite. 
Thus $t\in\PP$ for all sufficiently small $|\eps|$. Therefore $s$ is not an extreme point of $\PP$.
\end{proof}
\begin{lemma} 
\label{lemma:3}
Let $s\in[0,1]^m$. Suppose that $V$ is critical with respect to $s$.
Suppose that there exists exactly one index $i\in \set{1,2,\ldots,m}$ for which $s_i\notin\set{0,1}$. 
Then $V$ has a subspace which is supercritical with respect to $s$.
\end{lemma}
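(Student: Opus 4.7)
The plan is to exhibit an explicit supercritical subspace of $V$. The key observation is that for every $j\ne i$ we have $s_j\in\set{0,1}$, so by choosing $W\subim V$ to lie inside $\Kernel{\phi_j}$ whenever $s_j=1$, I can collapse the sum $\sum_j s_j\Dim{\phi_j(W)}$ to just its $i$-th term; then the strict inequality $s_i<1$ will provide supercriticality.

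Concretely, I would set $J_1\ceq\set{j\ne i:s_j=1}$ and take $W\ceq\bigcap_{j\in J_1}\Kernel{\phi_j}$, with the convention $W=V$ if $J_1=\emptyset$. Since $s_j=0$ for $j\notin J_1\cup\set{i}$, rewriting the critical hypothesis $\Dim{V}=\sum_j s_j\Dim{\phi_j(V)}$ gives
\[
\Dim{V}-\sum_{j\in J_1}\Dim{\phi_j(V)}=s_i\Dim{\phi_i(V)}.
\]
Applying rank--nullity to the $\FF$--linear map $V\to\bigoplus_{j\in J_1}V_j$ sending $x\mapsto(\phi_j(x))_{j\in J_1}$ (whose kernel is exactly $W$) yields
\[
\Dim{W}\ge\Dim{V}-\sum_{j\in J_1}\Dim{\phi_j(V)}=s_i\Dim{\phi_i(V)}.
\]
Provided $\Dim{\phi_i(V)}>0$ --- which is automatic in every setting where Lemma~\ref{lemma:3} is invoked, namely when $s$ is an extreme point of $\PP$, by Lemma~\ref{lemma:0} --- this forces $\Dim{W}>0$.

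Finally, by the definition of $W$ we have $\phi_j(W)=\set{0}$ for every $j\in J_1$, and $s_j=0$ for every $j\notin J_1\cup\set{i}$, so the total sum collapses to
\[
\sum_{j=1}^m s_j\Dim{\phi_j(W)}=s_i\Dim{\phi_i(W)}\le s_i\Dim{W}<\Dim{W},
\]
where the last strict inequality uses $s_i<1$ together with $\Dim{W}>0$. Hence $W$ is supercritical, as required.

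The one genuinely nonobvious step is the choice of $W$: the natural candidates $W=V$ (which is critical by hypothesis, not supercritical) and $W=\Kernel{\phi_i}$ (which in general can be only subcritical because its dimension is still bounded below by $\sum_{j\in J_1}\Dim{\phi_j(\Kernel{\phi_i})}$) both fail; the correct object is the intersection of kernels indexed by $J_1$, chosen so as to annihilate every non-$i$ index simultaneously. After this choice, the rest is immediate from rank--nullity and the trivial bound $\Dim{\phi_i(W)}\le\Dim{W}$.
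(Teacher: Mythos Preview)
Your proof is correct and essentially identical to the paper's own argument: the paper defines the same subspace $W=\bigcap_{k\in K}\Kernel{\phi_k}$ (with $K=J_1$), uses the same rank--nullity computation via the map $V\to\bigoplus_{k\in K}\phi_k(V)$ to deduce $\Dim{W}>0$, and collapses the sum to $s_i\Dim{\phi_i(W)}$ in the same way. Your explicit acknowledgment that the step $\Dim{\phi_i(V)}>0$ relies on Lemma~\ref{lemma:0} (and hence tacitly on $s$ being an extreme point) matches exactly what the paper does in its first line.
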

\begin{proof}
By Lemma~\ref{lemma:0}, $\Dim{\phi_i(V)}>0$.
Let $K$ be the set of all indices $k$ for which $s_k=1$.
The hypothesis that $V$ is critical means that
\begin{equation*}
\Dim{V} = s_i\Dim{\phi_i(V)} + \sum_{k\in K} s_k\Dim{\phi_k(V)}. 
\end{equation*}
Since $s_i>0$ and $\Dim{\phi_i(V)}>0$,
\begin{equation*}
\sum_{k\in K}\Dim{\phi_k(V)} = \sum_{k\in K} s_k \Dim{\phi_k(V)} = \Dim{V}-s_i\Dim{\phi_i(V)} < \Dim{V}.
\end{equation*}
Consider the subspace $W \subim V$ defined by
\begin{equation*} 
W=\bigcap_{k\in K}\Kernel{\phi_k};
\end{equation*}
this intersection is interpreted to be $W=V$ if the index set $K$ is empty.
$W$ necessarily has positive dimension. 
Indeed, $W$ is the kernel of the map $\psi\from V\to\bigoplus_{k\in K}\phi_k(V)$, defined by $\psi(x)=\tup{\phi_k(x): k\in K}$, where $\bigoplus$ denotes the direct sum of vector spaces.
The image of $\psi$ is isomorphic to some subspace of $\bigoplus_{k\in K}\phi_k(V)$, a vector space whose dimension $\sum_{k\in K} \Dim{\phi_k(V)}$ is strictly less than $\Dim{V}$.
Therefore $\Kernel{\psi}=W$ has dimension greater than or equal to $\Dim{V}-\sum_{k\in K}\Dim{\phi_k(V)}>0$.
Since $\phi_k(W)=\set{0}$ for all $k\in K$,
\begin{align*} 
\sum_{j=1}^m s_j \Dim{\phi_j(W)} &= s_i\Dim{\phi_i(W)} + \sum_{k\in K} \Dim{\phi_k(W)} \\
&= s_i\Dim{\phi_i(W)} \\
&\le s_i\Dim{W}.
\end{align*}
Since $s_i<1$  and $\Dim{W}>0$, $s_i\Dim{\phi_i(W)}$ is strictly less than $\Dim{W}$, whence $W$ is supercritical. 
\end{proof}
\begin{proof}[Proof of Proposition~\ref{prop:0or1}]
Suppose that there exists no critical subspace $\set{0} \subpr W \subpr V$. 
By Lemma~\ref{lemma:2}, either $s_j\in\set{0,1}^m$ --- in which case the proof is complete --- or $V$ is critical.
By Lemma~\ref{lemma:1}, there can be at most one index $i$ for which $s_i\notin\set{0,1}$.
By Lemma~\ref{lemma:3}, for critical $V$, the existence of one single such index $i$ implies the presence of some supercritical subspace, contradicting the main hypothesis of Proposition~\ref{prop:0or1}.  
Thus again, $s\in\set{0,1}^m$. 
\end{proof}

\subsection{Factorization of HBL data}
\begin{notation}
\label{not:splittingspaces}
Suppose $V,V'$ are finite-dimensional vector spaces over a field $\FF$, and $\phi\from V\to V'$ is an $\FF$--linear map.
When considering a fixed subspace $W \subim V$, then $\restr{\phi}{W}\from W \to \phi(W)$ denotes the restriction of $\phi$ to $W$, also a $\FF$--linear map.
$V/W$ denotes the quotient of $V$ by $W$; elements of $V/W$ are cosets $x+W=\set{x+w: w\in W}\subseteq V$ where $x\in V$. 
Thus $x+W=x'+W$ if and only if $x-x'\in W$. 
$V/W$ is also a (finite-dimensional) vector space, under the definition $(x+W)+(x'+W)=(x+x')+W$; every subspace of $V/W$ can be written as $U/W$ for some $W \subim U \subim V$.

Similarly we can define $V'/\phi(W)$, the quotient of $V'$ by $\phi(W)$, and the quotient map $[\phi]\from V/W \ni x+W \mapsto \phi(x) + \phi(W) \in V'/\phi(W)$,
also a $\FF$--linear map.
%
%
For any $U/W \subim V/W$, $[\phi](U/W)=\phi(U)/\phi(W)$.
\end{notation}
Let $\tup{V,\tup{V_j},\tup{\phi_j}}$ be a vector space HBL datum.
To any subspace $W\subim V$ can be associated two HBL data:
\[
\tup{W,\tup{\phi_j(W)},\tup{\restr{\phi_j}{W}}} \text{ and } \tup{V/W,\tup{V_j/\phi_j(W)},\tup{[\phi_j]}}
\]
\begin{lemma} \label{lemma:factorization1}
Given the vector space HBL datum $\tup{V,\tup{V_j},\tup{\phi_j}}$, for any subspace $W\subim V$, 
\begin{equation}
\PP\dtup{W,\dtup{V_j},\dtup{\restr{\phi_j}{W}}} 
\cap \PP\dtup{V/W,\dtup{V_j/\phi_j(W)},\dtup{[\phi_j]}}
\subseteq \PP\dtup{V,\dtup{V_j},\dtup{\phi_j}} 
\end{equation}
\end{lemma}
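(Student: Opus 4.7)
The plan is to take any subspace $W' \subim V$ and decompose it along $W$, reducing the hypothesis on $V$ to the assumed hypotheses on the subspace $W$ and the quotient $V/W$. Concretely, set $U \ceq W' \cap W$, which is a subspace of $W$, and $\widetilde{W'} \ceq (W'+W)/W$, which is a subspace of $V/W$. By the second isomorphism theorem,
\[
\Dim{W'} \;=\; \Dim{W' \cap W} + \Dim{W'/(W' \cap W)} \;=\; \Dim{U} + \Dim{\widetilde{W'}}.
\]

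Next, I would invoke both hypotheses. Assuming $s$ lies in the intersection on the left-hand side, applying \eqref{subcriticalhypothesisfield} for $\dtup{W,\dtup{\phi_j(W)},\dtup{\restr{\phi_j}{W}}}$ to $U$ and for $\dtup{V/W,\dtup{V_j/\phi_j(W)},\dtup{[\phi_j]}}$ to $\widetilde{W'}$ gives
\[
\Dim{U} \le \sum_{j=1}^m s_j\Dim{\phi_j(U)} \qquad\text{and}\qquad \Dim{\widetilde{W'}} \le \sum_{j=1}^m s_j\Dim{[\phi_j](\widetilde{W'})}.
\]
Using $[\phi_j](\widetilde{W'}) = \phi_j(W'+W)/\phi_j(W) = (\phi_j(W')+\phi_j(W))/\phi_j(W)$, we have $\Dim{[\phi_j](\widetilde{W'})} = \Dim{\phi_j(W')+\phi_j(W)} - \Dim{\phi_j(W)}$.

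The crux is then a per-$j$ estimate relating the two summands back to $\Dim{\phi_j(W')}$. Since $\phi_j(W' \cap W) \subim \phi_j(W') \cap \phi_j(W)$, we get $\Dim{\phi_j(U)} \le \Dim{\phi_j(W') \cap \phi_j(W)}$, and by the standard inclusion-exclusion formula for dimensions of subspaces,
\[
\Dim{\phi_j(U)} + \Dim{[\phi_j](\widetilde{W'})} \;\le\; \Dim{\phi_j(W')\cap\phi_j(W)} + \Dim{\phi_j(W')+\phi_j(W)} - \Dim{\phi_j(W)} \;=\; \Dim{\phi_j(W')}.
\]
Multiplying by $s_j \ge 0$ and summing, then combining with the decomposition of $\Dim{W'}$, yields $\Dim{W'} \le \sum_{j=1}^m s_j \Dim{\phi_j(W')}$, as required.

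The only subtle point — what I would call the main obstacle — is the inequality $\phi_j(W' \cap W) \subim \phi_j(W') \cap \phi_j(W)$ being only an inclusion, not an equality (the kernel of $\phi_j$ can cause collapse). Fortunately, this inequality goes in the right direction, since it \emph{decreases} $\Dim{\phi_j(U)}$, so the combined estimate still closes via modular-law/inclusion-exclusion. No sign issue arises precisely because $s_j \ge 0$, which is built into Definition~\ref{def:P}. The argument is purely dimension-theoretic and does not depend on the field $\FF$, so the lemma holds in the stated generality.
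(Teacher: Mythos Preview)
Your proof is correct and is essentially identical to the paper's own argument: both decompose an arbitrary subspace along $W$ into $W'\cap W$ and $(W'+W)/W$, apply the two polytope hypotheses, and close via the inclusion $\phi_j(W'\cap W)\subseteq \phi_j(W')\cap\phi_j(W)$ together with the dimension formula $\Dim{A}+\Dim{B}=\Dim{A+B}+\Dim{A\cap B}$. The only difference is notational (your $W'$ is the paper's $U$), and your minor slip writing the codomain as $\phi_j(W)$ rather than $V_j$ in the first datum is immaterial to the polytope.
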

\begin{proof}
Consider any subspace $U\subim V$ and some $s\in[0,1]^m$ such that both $s\in\PP\tup{W,\tup{V_j},\tup{\restr{\phi_j}{W}}}$ and $s\in\PP\tup{V/W,\tup{V_j/\phi_j(W)},\tup{[\phi_j]}}$.
Then
\begin{align*}
\Dim{U} &= \Dim{(U+W)/W} + \Dim{U\cap W} \\
&\le \sum_{j=1}^m s_j \Dim{[\phi_j]((U+W)/W))}  + \sum_{j=1}^m s_j \Dim{\phi_j(U\cap W)} \\
&= \sum_{j=1}^m s_j \Dim{\phi_j(U+W)/\phi_j(W)}  + \sum_{j=1}^m s_j \Dim{\phi_j(U\cap W)} \\
&= \sum_{j=1}^m s_j \lt(\Dim{\phi_j(U+W)} - \Dim{\phi_j(W)} \rt) + \sum_{j=1}^m s_j \Dim{\phi_j(U\cap W)} \\
&= \sum_{j=1}^m s_j \lt(\Dim{\phi_j(U)+\phi_j(W)} + \Dim{\phi_j(U\cap W)} - \Dim{\phi_j(W)}\rt) \\
&\le \sum_{j=1}^m s_j \lt(\Dim{\phi_j(U)+\phi_j(W)} + \Dim{\phi_j(U)\cap \phi_j(W)} - \Dim{\phi_j(W)}\rt) \\
&=  \sum_{j=1}^m s_j \Dim{\phi_j(U)}.
\end{align*}
The last inequality is a consequence of the inclusions
%
%
$\phi_j(U\cap W) \subseteq \phi_j(U)\cap \phi_j(W)$. 
The last equality is the relation $\Dim{A}+\Dim{B} = \Dim{A+B}+\Dim{A\cap B}$, which holds for any subspaces $A,B$ of a vector space.
Thus $U$ is subcritical.
\end{proof}
\begin{lemma} \label{lemma:factorization}
Given the vector space HBL datum $\tup{V,\tup{V_j},\tup{\phi_j}}$, let $s\in\PP\dtup{V,\dtup{V_j},\dtup{\phi_j}}$. 
Let $W\subim V$ be a subspace which 
is critical with respect to $s$.
Then 
\begin{equation}
\PP\dtup{V,\dtup{V_j},\dtup{\phi_j}} 
= \PP\dtup{W,\dtup{V_j},\dtup{\restr{\phi_j}{W}}} 
\cap \PP\dtup{V/W,\dtup{V_j/\phi_j(W)},\dtup{[\phi_j]}}.
\end{equation}
\end{lemma}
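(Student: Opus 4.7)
The $\supseteq$ inclusion is exactly Lemma~\ref{lemma:factorization1}, so the task is to prove $\PP\dtup{V,\dtup{V_j},\dtup{\phi_j}} \subseteq \PP\dtup{W,\dtup{V_j},\dtup{\restr{\phi_j}{W}}} \cap \PP\dtup{V/W,\dtup{V_j/\phi_j(W)},\dtup{[\phi_j]}}$. Fix any $t$ in the left-hand polytope. The subspace inclusion $t\in\PP\dtup{W,\dtup{V_j},\dtup{\restr{\phi_j}{W}}}$ is immediate: every subspace $U \subim W$ is a subspace of $V$, and since $\restr{\phi_j}{W}(U) = \phi_j(U)$, the defining inequality for $U$ in the restricted polytope coincides with an inequality already satisfied by $t$ in the larger polytope.

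For the quotient inclusion, write any subspace of $V/W$ as $U/W$ with $W\subim U\subim V$. Since $\phi_j(W)\subim\phi_j(U)$, one has $\Dim{\phi_j(U)/\phi_j(W)} = \Dim{\phi_j(U)} - \Dim{\phi_j(W)}$, so writing $L_H(t) \ceq \Dim{H}-\sum_j t_j\Dim{\phi_j(H)}$, the required inequality $\Dim{U/W}\le\sum_j t_j\Dim{[\phi_j](U/W)}$ reduces to $L_U(t) - L_W(t) \le 0$. My plan is to exhibit a subspace $H \subim U$ that is simultaneously a complement of $W$ in $U$ and whose image $\phi_j(H)$ is complementary to $\phi_j(W)$ in $\phi_j(U)$ for every $j$. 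Such an $H$ satisfies $\Dim{H} = \Dim{U} - \Dim{W}$ and $\Dim{\phi_j(H)} = \Dim{\phi_j(U)} - \Dim{\phi_j(W)}$, so $L_U(t) - L_W(t) = L_H(t)$, which is $\le 0$ because $H$ is a subspace of $V$ and $t\in\PP\dtup{V,\dtup{V_j},\dtup{\phi_j}}$.

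The hard part is establishing the existence of such an $H$. Over an infinite base field (in particular $\FF=\QQ$, the case actually used in the HBL applications via Lemma~\ref{lem:PZequalsPQ}), a generic complement of $W$ in $U$ has the required property: parameterizing complements as graphs of linear maps $U/W \to W$, the locus of complements for which $\phi_j(H)\cap\phi_j(W)\ne\set{0}$ for some $j$ is a Zariski-closed subvariety of the Grassmannian of complements. The $s$-criticality of $W$ enters to guarantee that this locus is proper, i.e.\ that the target dimensions $\Dim{\phi_j(H)} = \Dim{\phi_j(U)} - \Dim{\phi_j(W)}$ are all simultaneously feasible: at $s$ itself, subtracting the criticality equality $\Dim{W}=\sum_j s_j\Dim{\phi_j(W)}$ from the subcriticality inequality $\Dim{U} \le \sum_j s_j \Dim{\phi_j(U)}$ yields $L_U(s)-L_W(s)=L_U(s) \le 0$, certifying that the quotient constraint is a genuine consequence of constraints inherited from $\PP\dtup{V,\dtup{V_j},\dtup{\phi_j}}$. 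I expect the cleanest writeup to proceed by induction on $\Dim{U/W}$, adjoining one generator at a time and using genericity to avoid the subspaces $W + U \cap \Kernel{\phi_j}$ at each step.
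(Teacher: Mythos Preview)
Your approach has a genuine gap: the complement $H$ you seek need not exist, and your genericity argument is oriented the wrong way. The condition $\phi_j(H)\cap\phi_j(W)=\{0\}$ forces $\Dim{\phi_j(H)}$ to equal its \emph{minimum} possible value $\Dim{\phi_j(U)}-\Dim{\phi_j(W)}$ among complements of $W$ in $U$; a generic complement maximizes $\Dim{\phi_j(H)}$, not minimizes it, so the ``bad'' locus you describe is Zariski-open rather than closed. Concretely, take $V=\QQ^2$ with $\phi_1(x,y)=x$, $\phi_2(x,y)=x+y$, $\phi_3=\text{id}$, and set $W=\langle e_1\rangle$, $U=V$. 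The complements of $W$ are the lines $H_a=\langle(a,1)\rangle$; the condition for $\phi_1$ forces $a=0$ while the condition for $\phi_2$ forces $a=-1$, so no simultaneous $H$ exists. Here $W$ is critical for $s=(0,0,1)\in\PP(V,(V_j),(\phi_j))$, so the hypothesis of the lemma is met.

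In fact this same example shows that the literal polytope equality you are aiming for is false: one checks $t=(1,1,0)\in\PP(V,(V_j),(\phi_j))$, but since $\phi_1(W)=\phi_2(W)=\QQ$ the quotient maps $[\phi_1],[\phi_2]$ vanish while $[\phi_3]$ is an isomorphism, so $\PP(V/W,\ldots)$ requires $t_3=1$ and $t\notin\PP(V/W,\ldots)$. The paper's proof establishes only the weaker (and true) statement that the \emph{particular} $s$ for which $W$ is critical lies in $\PP(W,\ldots)\cap\PP(V/W,\ldots)$; this is all that is used in Lemma~\ref{lemma:criticalW} and Proposition~\ref{prop:subalg}. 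That argument is completely direct: subtract the criticality equality $\Dim{W}=\sum_j s_j\Dim{\phi_j(W)}$ from the subcriticality inequality $\Dim{U}\le\sum_j s_j\Dim{\phi_j(U)}$ to obtain $\Dim{U/W}\le\sum_j s_j\Dim{[\phi_j](U/W)}$, with no auxiliary complement needed.
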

\begin{proof}
With Lemma~\ref{lemma:factorization1} in hand, it remains to show that $\PP\tup{V,\tup{V_j},\tup{\phi_j}}$ is contained in the intersection of the other two polytopes.

Any subspace $U \subim W$ is also a subspace of $V$.
$U$ is subcritical with respect to $s$ when regarded as a subspace of $W$, if and only if $U$ is subcritical when regarded as a subspace of $V$.
So $s\in\PP\tup{W,\tup{V_j},\tup{\restr{\phi_j}{W}}}$.

Now consider any subspace of $U/W \subim W/V$; we have $W \subim U \subim V$ and $\Dim{U/W} = \Dim{U}-\Dim{W}$.
%
%
Moreover, 
\[
\Dim{[\phi_j](U/W)} = \Dim{\phi_j(U)/\phi_j(W)} = \Dim{\phi_j(U)}-\Dim{\phi_j(W)}.
\]
Therefore since $\Dim{W} = \sum_{j=1}^m s_j\Dim{\phi_j(W)}$,
\begin{multline*}
\Dim{U/W} = \Dim{U}-\Dim{W}
\le \sum_{j=1}^m s_j \Dim{\phi_j(U^*)} - \sum_{j=1}^m s_j\Dim{\phi_j(W)}\\ 
= \sum_{j=1}^m s_j \lt(\Dim{\phi_j(U)} - \Dim{\phi_j(W)}\rt)
= \sum_{j=1}^m s_j \Dim{[\phi_j](U/W)}
\end{multline*}
by the subcriticality of $U$, which holds because $s\in\PP\tup{V,\tup{V_j},\tup{\phi_j}}$. 
Thus any $U/W \subim V/W$ is subcritical with respect to $s$, so $s\in \PP\tup{V/W,\tup{V_j/\phi_j(W)},\tup{[\phi_j]}}$ as well.
\end{proof}

\subsection{Proof of Theorem~\ref{thm:field}} \label{sec:thm:field-proof}
Recall we are given the vector space HBL datum $\tup{V,\tup{V_j},\tup{\phi_j}}$; we prove Theorem~\ref{thm:field} by induction on the dimension of the ambient vector space $V$.
If $\Dim{V}=0$ then $V$ has a single element, and the result \eqref{BLfield} is trivial.

To establish the inductive step, consider any extreme point $s$ of $\PP$.
According to Proposition~\ref{prop:0or1}, there are two cases which must be analyzed.
We begin with the case in which there exists a critical subspace $\set{0} \subpr W \subpr V$, which we prove in the following lemma. 
We assume that Theorem~\ref{thm:field} holds for all HBL data for which the ambient vector space has strictly smaller dimension than is the case for the given datum.
\begin{lemma} \label{lemma:criticalW}
Let $\tup{V,\tup{V_j},\tup{\phi_j}}$ be a vector space HBL datum, and let $s\in\PP(V,\tup{V_j},\tup{\phi_j})$.
Suppose that subspace $\set{0} \subpr W \subpr V$ is critical with respect to $s$.
Then \eqref{BLfield} holds for this $s$.
\end{lemma}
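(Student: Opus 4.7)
My plan is to apply the factorization of HBL data in Lemma~\ref{lemma:factorization} together with a fibering argument along the cosets of $W$. Since $s \in \PP\tup{V,\tup{V_j},\tup{\phi_j}}$ and $W$ is critical with respect to $s$, Lemma~\ref{lemma:factorization} yields both $s \in \PP\tup{W,\tup{V_j},\tup{\restr{\phi_j}{W}}}$ and $s \in \PP\tup{V/W,\tup{V_j/\phi_j(W)},\tup{[\phi_j]}}$. Because $\set{0} \subpr W \subpr V$, both $W$ and $V/W$ have strictly smaller dimension than $V$, so the inductive hypothesis delivers \eqref{BLfield} for each of these two smaller vector space HBL data.

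Next I would split the sum over $V$ into an iterated sum, first over cosets of $W$ and then within each coset. For each coset $\bar{x} \in V/W$, fix any representative $x_0 \in V$ with $x_0 + W = \bar{x}$; as $w$ ranges over $W$ the point $\phi_j(x_0 + w) = \phi_j(x_0) + \phi_j(w)$ traces out the coset $\phi_j(x_0) + \phi_j(W)$ in $V_j$, which corresponds to $[\phi_j](\bar{x}) \in V_j/\phi_j(W)$. Define $g_{j,\bar{x}} \from \phi_j(W) \to [0,\infty)$ by $g_{j,\bar{x}}(y) \ceq f_j(\phi_j(x_0) + y)$, and apply the inductive HBL inequality on $W$ to obtain
\[
\sum_{w \in W} \prod_{j=1}^m f_j(\phi_j(x_0 + w)) = \sum_{w \in W} \prod_{j=1}^m g_{j,\bar{x}}(\phi_j(w)) \le \prod_{j=1}^m \norm{g_{j,\bar{x}}}_{1/s_j}.
\]

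Now define $F_j \from V_j/\phi_j(W) \to [0,\infty)$ by $F_j(\bar{y})^{1/s_j} \ceq \sum_{z \in \bar{y}} f_j(z)^{1/s_j}$ (interpreting this as a supremum over the coset when $s_j = 0$). A direct calculation using the translation invariance of counting measure gives $\norm{g_{j,\bar{x}}}_{1/s_j} = F_j([\phi_j](\bar{x}))$; in particular this quantity depends only on $\bar{x}$, not on the choice of representative $x_0$. Summing the previous display over $\bar{x} \in V/W$ and applying the inductive HBL inequality on $V/W$ to the $m$-tuple $(F_j)$ then yields
\[
\sum_{x \in V} \prod_{j=1}^m f_j(\phi_j(x)) \le \sum_{\bar{x} \in V/W} \prod_{j=1}^m F_j([\phi_j](\bar{x})) \le \prod_{j=1}^m \norm{F_j}_{1/s_j}.
\]
A Fubini-type identity shows $\norm{F_j}_{1/s_j} = \norm{f_j}_{1/s_j}$, since the cosets of $\phi_j(W)$ partition $V_j$ and the same exponent $1/s_j$ is used both inside $F_j$ and in its outer norm. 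Chaining these bounds gives \eqref{BLfield} for the original datum, completing the inductive step.

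The main obstacle will be keeping the bookkeeping between the two inductive applications clean: one must verify that $F_j$ really is a well-defined function on the quotient $V_j/\phi_j(W)$ (which reduces to the translation invariance noted above), and that the edge cases $s_j \in \set{0,1}$ work out under the convention $0\cdot\infty = 0$ quoted in the paper. Apart from these checks, the argument is a straightforward two-step decomposition once Lemma~\ref{lemma:factorization} has supplied membership in both smaller polytopes $\PP$.
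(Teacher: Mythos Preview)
Your proposal is correct and follows essentially the same approach as the paper's proof: both invoke Lemma~\ref{lemma:factorization} to place $s$ in the two smaller polytopes, fiber the sum over cosets of $W$, define the quotient functions $F_j$ via $\ell^{1/s_j}$ norms along cosets of $\phi_j(W)$, and apply the inductive hypothesis twice. The only cosmetic difference is that the paper eliminates indices with $s_j=0$ at the outset, whereas you carry them through with the supremum convention; both treatments are valid.
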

\begin{proof}
Consider any inequality in \eqref{BLfield}.
We may assume that none of the exponents equal zero. 
For if $s_k=0$, then $f_k(\phi_k(x)) \le \norm{f_k}_{1/s_k}$ for all $x$, and therefore 
\[\sum_{x\in V} \prod_{j=1}^m f_j(\phi_j(x)) \le \norm{f_k}_{1/s_k} \cdot \sum_{x\in V} \prod_{j\ne k} f_j(\phi_j(x)).\] 
If $\norm{f_k}_{1/s_k}=0$, then \eqref{BLfield} holds with both sides $0$. 
Otherwise we divide by $\norm{f_k}_{1/s_k}$ to conclude that $s\in\PP\tup{V,\tup{V_j},\tup{\phi_j}}$ if and only if $\tup{s_j}_{j \ne k}$ belongs to the polytope associated to the HBL datum $\tup{V,\tup{V_j}_{j\ne k},\tup{\phi_j}_{j\ne k}}$.
Thus the index $k$ can be eliminated. 
This reduction can be repeated to remove all indices which equal zero.

%
Let $W_j \ceq \phi_j(W)$.
By Lemma~\ref{lemma:factorization}, $s\in\PP\tup{W,\tup{W_j},\tup{\restr{\phi_j}{W}}}$.
Therefore by the inductive hypothesis, one of the inequalities in \eqref{BLfield} is
\begin{equation} \label{Winequality}
\sum_{x\in W} \prod_{j=1}^m f_j(\phi_j(x))\le \prod_{j=1}^m \norm{\restr{f_j}{W_j}}_{1/s_j}.
\end{equation}

%

Define $F_j \from V_j/W_j\to[0,\infty)$ to be the function 
\begin{equation*}
F_j(x+W_j) = \Big(\sum_{y\in W_j} f_j(y+x)^{1/s_j}\Big)^{s_j}.
\end{equation*}
This quantity is a function of the coset $x+W_j$ alone, rather than of $x$ itself, because for any $z\in W_j$,
\begin{equation*}
\sum_{y\in W_j} f_j(y+(x+z))^{1/s_j} =\sum_{y\in W_j} f_j(y+x)^{1/s_j}
\end{equation*}
by virtue of the substitution $y+z\mapsto y$.
Moreover,
\begin{equation} \label{Fjnorm} 
\norm{F_j}_{1/s_j} = \norm{f_j}_{1/s_j}.
\end{equation}
To prove this, choose one element $x\in V_j$ for each coset $x+W_j\in V_j/W_j$.
Denoting by $X$ the set of all these representatives, 
\begin{equation*}
\norm{F_j}_{1/s_j}^{1/s_j}  
=\sum_{x\in X} \sum_{y\in W_j} f_j(y+x)^{1/s_j} 
=\sum_{z\in V_j}  f_j(z)^{1/s_j}
\end{equation*}
because the map $X\times W_j\ni (x,y)\mapsto x+y\in V_j$ is a bijection.

The inductive bound \eqref{Winequality} can be equivalently written in the more general form
\begin{equation} \label{Winequality2}
\sum_{x\in W} \prod_{j=1}^m f_j(\phi_j(x+y))\le \prod_{j=1}^m F_j([\phi_j](y+W))
\end{equation}
for any $y\in V$, by applying \eqref{Winequality} to $\tup{\hat{f}_j}$ where $\hat{f}_j(z) = f_j(z + \phi_j(y))$.

Denote by $Y\subseteq V$ a set of representatives of the cosets $y+W\in V/W$, and identify $V/W$ with $Y$. 
Then
\begin{equation*} 
\sum_{x\in V} \prod_{j=1}^m f_j(\phi_j(x))
= \sum_{y\in Y} \sum_{x\in W} \prod_{j=1}^m f_j(\phi_j(y+x))
\le \sum_{y+W\in V/W} \prod_{j=1}^m F_j([\phi_j](y+W)) 
\end{equation*}
by \eqref{Winequality2}.

By \eqref{Fjnorm}, it suffices to show that
\begin{equation} \label{quotientBL}
\sum_{y+W\in V/W} \prod_{j} F_j([\phi_j](y+W))\le \prod_j\norm{F_j}_{1/s_j}  \qquad \text{for all functions } 0 \le F_j \in \ell^{1/s_j}(V_j/W_j).
\end{equation}
%
%
This is 
a set of inequalities of exactly the form \eqref{BLfield}, with $\tup{V,\tup{V_j},\tup{\phi_j}}$ replaced by $\tup{V/W,\tup{V_j/W_j},\tup{[\phi_j]}}$.
By Lemma~\ref{lemma:factorization}, $s\in\PP\tup{V/W,\tup{V_j/W_j},\tup{[\phi_j]}}$, and since $\Dim{V/W}<\Dim{V}$, we conclude directly from the inductive hypothesis that \eqref{quotientBL} holds, concluding the proof of Lemma~\ref{lemma:criticalW}.
\end{proof}
%

According to Proposition~\ref{prop:0or1}, in order to complete the proof of Theorem~\ref{thm:field}, it remains only to analyze the case where the extreme point $s\in\set{0,1}^m$. 
Let $K=\set{k: s_k=1}$. 
Consider $W=\bigcap_{k\in K} \Kernel{\phi_k}$. 
Since $W$ is subcritical by hypothesis,
\begin{equation*}
\Dim{W}\le \sum_{j=1}^m s_j\Dim{\phi_j(W)} = \sum_{k\in K}\Dim{\phi_k(W)}=0 
\end{equation*}
so $\Dim{W}=0$, that is, $W=\set{0}$.
Therefore the map $x\mapsto\tup{\phi_k(x)}_{k\in K}$ from $V$ to the Cartesian product $\prod_{k\in K} V_k$ is injective. 

For any $x\in V$, 
\begin{equation*}
\prod_{j=1}^m f_j(\phi_j(x))
\le \prod_{k\in K} f_k(\phi_k(x))\prod_{i\notin K} \norm{f_i}_\infty
= \prod_{k\in K} f_k(\phi_k(x))\prod_{i\notin K} \norm{f_i}_{1/s_i}
\end{equation*}
since $s_i=0$ for all $i\notin K$.
Thus it suffices to prove that
\begin{equation*}
\sum_{x\in V}\prod_{k\in K} f_k(\phi_k(x)) \le \prod_{k\in K} \norm{f_k}_1.
\end{equation*}
This is a special case of the following result.
\begin{lemma} \label{lemma:exponentsallone}
Let $V$ be any finite-dimensional vector space over $\FF$.
Let $K$ be a finite index set, and for each $k\in K$, let $\phi_k$ be an $\FF$--linear map from $V$ to a finite-dimensional vector space $V_k$.
If $\bigcap_{k\in K}\Kernel{\phi_k}=\set{0}$ then for all functions $f_k\from V_k\to[0,\infty)$, 
\begin{equation*}
\sum_{x\in V} \prod_{k\in K} f_k(\phi_k(x))\le \prod_{k\in K}\norm{f_k}_1.
\end{equation*}
\end{lemma}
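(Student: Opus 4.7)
The plan is to reformulate the hypothesis as an injectivity statement and then bound the sum by dominating it with a full product sum. Specifically, let $\Phi \from V \to \prod_{k\in K} V_k$ be the $\FF$--linear map defined by $\Phi(x) = \tup{\phi_k(x)}_{k\in K}$. Its kernel is precisely $\bigcap_{k\in K}\Kernel{\phi_k}$, which by hypothesis equals $\set{0}$; so $\Phi$ is injective, and $V$ is in bijection with $\Phi(V) \subseteq \prod_{k\in K} V_k$.

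Next, I would perform the change of summation variable $x\mapsto\Phi(x)$ to rewrite
\[
\sum_{x\in V} \prod_{k\in K} f_k(\phi_k(x)) = \sum_{\tup{y_k}\in\Phi(V)} \prod_{k\in K} f_k(y_k).
\]
Since each $f_k$ is nonnegative, the summand is nonnegative, so extending the sum from $\Phi(V)$ to all of $\prod_{k\in K} V_k$ can only increase the total:
\[
\sum_{\tup{y_k}\in\Phi(V)} \prod_{k\in K} f_k(y_k) \le \sum_{\tup{y_k}\in \prod_{k\in K} V_k} \prod_{k\in K} f_k(y_k).
\]
A Tonelli-type factorization (valid because the product is a finite product of nonnegative unconditionally summable terms) yields $\prod_{k\in K}\sum_{y_k\in V_k} f_k(y_k) = \prod_{k\in K}\norm{f_k}_1$, completing the chain of inequalities.

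There is not really a substantive obstacle here; the one thing to be careful about is that the sums may be infinite when $V$ or $V_k$ is infinite (over infinite fields $\FF$), but the convention on products $0\cdot\infty=0$ together with the nonnegativity of $f_k$ ensures the rearrangement and factorization go through in $[0,\infty]$. If a cleaner presentation is desired, one may first note that when any $\norm{f_k}_1 = \infty$ the inequality is trivial, and otherwise each $f_k$ is supported on a countable set, so only countably many $x\in V$ contribute a nonzero summand and all manipulations reduce to countable rearrangements of nonnegative series.
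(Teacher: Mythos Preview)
Your proof is correct and follows essentially the same approach as the paper: define the map $\Phi(x)=\tup{\phi_k(x)}_{k\in K}$, use the hypothesis to conclude injectivity, and compare the sum over $V$ with the full product sum $\sum_{y}\prod_k f_k(y_k)=\prod_k\norm{f_k}_1$ via nonnegativity. The paper merely presents the inequality in the reverse direction (starting from $\prod_k\norm{f_k}_1$ and noting it dominates the sum over $V$), but the argument is the same.
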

\begin{proof}
Define $\Phi\from V\to\prod_{k\in K} V_k$ by $\Phi(x) = \tup{\phi_k(x)}_{k\in K}$.
The hypothesis $\bigcap_{k\in K}\Kernel{\phi_k}=\set{0}$ is equivalent to $\Phi$ being injective.
The product $\prod_{k\in K} \norm{f_k}_1$ can be expanded as the sum of products
\begin{equation*}
\sum_{y} \prod_{k\in K} f_k(y_k)
\end{equation*}
where the sum is taken over all $y=\tup{y_k}_{k \in K}$ belonging to the Cartesian product $\prod_{k\in K}V_k$.
The quantity of interest, 
\begin{equation*}
\sum_{x\in V} \prod_{k\in K} f_k(\phi_k(x)),
\end{equation*}
is likewise a sum of such products.
Each term of the latter sum appears as a term of the former sum, and by virtue of the injectivity of $\Phi$, appears only once.
Since all summands are nonnegative, the former sum is greater than or equal to the latter. 
Therefore
\begin{equation*}
\prod_{k\in K}\norm{f_k}_1 
= \sum_{y} \prod_{k\in K} f_k(y_k) 
\ge \sum_{x\in V} \prod_{k\in K} f_k(\phi_k(x)).
\end{equation*}
\end{proof}
\noindent Having shown sufficiency for extreme points of $\PP$, we apply Lemma~\ref{lem:atextremepoints} to conclude sufficiency for all $s\in\PP$.

As mentioned above, necessity in the case $\FF=\QQ$ can be deduced from necessity in Theorem~\ref{thm:1}, by clearing denominators.
First, we identify $V$ and $V_j$ with $\QQ^d$ and $\QQ^{d_j}$ and let $E$ be any nonempty finite subset of $\QQ^d$.
Let $\hat \phi_j \from \QQ^d \to \QQ^{d_j}$ be the linear map represented by the matrix of $\phi_j$ multiplied by the lowest common denominator of its entries, i.e., an integer matrix.
Likewise, let $\hat E$ be the set obtained from $E$ by multiplying each point by the lowest common denominator of the coordinates of all points in $E$.
Then by linearity, \[ |\hat E| = |E| \le \prod_{j=1}^m |\phi_j(E)|^{s_j} = \prod_{j=1}^m |\hat \phi_j(\hat E)|^{s_j}. \]
Recognizing $\tup{\ZZ^d,\tup{\ZZ^{d_j}},\tup{\restr{\hat \phi_j}{\ZZ^d}}}$ as an Abelian group HBL datum, we conclude \eqref{subcriticalhypothesis} for this datum from the converse of Theorem~\ref{thm:1}.
According to Lemma~\ref{lem:PZequalsPQ}, \eqref{subcriticalhypothesisfield} holds for the vector space HBL datum $\tup{\QQ^d,\tup{\QQ^{d_j}},\tup{\hat \phi_j}}$; our conclusion follows since $\Dim{\hat \phi_j (W)} = \Dim{\phi_j(W)}$ for any $W \subim \QQ^d$.

It remains to treat the case of a finite field $\FF$. 
Whereas the above reasoning required only the validity of \eqref{BLfield;sets} in the weakened form $|E|\le C\prod_{j=1}^m|\phi_j(E)|^{s_j}$ for some constant $C<\infty$ independent of $E$ (see proof of necessity for Theorem~\ref{thm:2}), now the assumption that this holds with $C=1$ becomes essential.
Let $W$ be any subspace of $V$.
Since $|\FF|<\infty$ and $W$ has finite dimension over $\FF$, $W$ is a finite set and the hypothesis \eqref{BLfield;sets} can be applied with $E=W$.
Therefore $|W|\le\prod_{j=1}^m|\phi_j(W)|^{s_j}$.
This is equivalent to
\[ 
|\FF|^{\Dim{W}} \le \prod_{j=1}^m |\FF|^{s_j\Dim{\phi_j(W)}}, 
\]
so since $|\FF|\ge 2$, taking base--$|\FF|$ logarithms of both sides, we obtain $\Dim{W} \le \sum_{j=1}^m s_j\Dim{\phi_j(W)}$, as was to be shown.
\qed
\section{Communication lower bounds from Theorem~\ref{thm:1}} \label{sec:lb}

In this section we introduce a concrete execution model for programs running on a sequential machine with a two-level memory hierarchy.
With slight modification, our model also applies to parallel executions; we give details below in order to extend our sequential lower bounds to the parallel case in \sectn{\ref{sec:gen-machine}}.
We assume the memory hierarchy is program-managed, so all data movement between slow and fast memory is seen as explicit instructions, and computation can only be performed on values in fast memory.  
We combine this with the geometric model from \sectn{\ref{sec:model}} and the upper bound from Theorem~\ref{thm:1} to get a communication lower bound of the form $\Omega(\niters/M^{\shbl-1})$, where $\niters$ is the number of inner loop iterations, represented by the finite set $\iters\subseteq\ZZ^d$. 
Then we discuss how the lower bounds extend to programs on more complicated machines, like heterogeneous parallel systems.

In addition to the concrete execution model and the geometric model, we use pseudocode in our examples.   
At a high level, these three different algorithm representations are related as follows: 
\begin{itemize}

\item A concrete execution is a sequence of instructions executed by the machine, according to the model detailed in \sectn{\ref{sec:concrete}}.
The concrete execution, unlike either the geometric model or pseudocode, contains explicit data movement operations between slow and fast memory.
\item The geometric model (Definition~\ref{def:geometricmodel}), is the most abstract, and is the foundation of the bounds in \sectn{\ref{sec:hbl}}. 
Each instance  \eqref{eqn_AlgModel} of the geometric model corresponds to a set of concrete executions, as detailed in \sectn{\ref{sec:geometric-to-concrete}}.
\item A pseudocode representation, like our examples with (nested) for-loops, identifies an instance \eqref{eqn_AlgModel} of the geometric model.
Since the bounds in \sectn{\ref{sec:hbl}} only depend on $\tup{\ZZ^d,\tup{\ZZ^{d_j}},\tup{\phi_j}}$, one may vary the iteration space $\iters \subseteq \ZZ^d$, the order it is traversed, and the statements in the inner loop body (provided all $m$ arrays are still accessed each iteration), to obtain a different instance \eqref{eqn_AlgModel} of the geometric model with the same bound. 
So, when we prove a bound for a program given as pseudocode, we are in fact proving a bound for a larger class of programs.
\end{itemize}

 The rest of this section is organized as follows. 
 \Sectn{\ref{sec:concrete}} describes the concrete execution model mentioned above, and \sectn{\ref{sec:geometric-to-concrete}} relates the concrete execution model to the geometric model. 
 \Sectn{\ref{sec:lb_deriv}} states and proves the main communication lower bound result of this paper, Theorem~\ref{Thm4.1}. 
 \Sectn{\ref{sec:lb_examples}} presents a number of examples showing why the assumptions of Theorem~\ref{Thm4.1} are in fact necessary to obtain a lower bound. 
 \Sectn{\ref{sec_LoopSplitting}} looks at one of these assumptions in more detail (``no loop splitting''), and shows that loop splitting can only improve (reduce) the lower bound.
Finally, \sectn{\ref{sec:gen-machine}} discusses generalizations of the lower bound result to other machine models.

\subsection{Concrete execution model}
\label{sec:concrete}
The hypothetical machine in our execution model has a two-level memory hierarchy: a slow memory of unbounded capacity and a fast memory that can store $M$ words (all data in our model have one-word width).
Data movement between slow and fast memory is explicitly managed by software instructions (unlike a hardware cache), and data is copied\footnote{We will use the word `copied' but our analysis does not require that a copy remains, e.g., exclusive caches.} at a one-word granularity (we will discuss spatial locality in Part~2 of this work).
Every storage location in slow and fast memory (including the array elements $A_j(\phi_j(\indx))$) has a unique memory address, called a \emph{variable}; since the slow and fast memory address spaces (variable sets) are disjoint, we will distinguish between slow memory variables and fast memory variables.
When a fast memory variable represents a copy of a slow memory variable $v$, we refer to $v$ as a \emph{cached slow memory variable}; in this case, we assume we can always identify the corresponding fast memory variable given $v$, even if the copy is relocated to another fast memory location. 
%


We define a \emph{sequential execution} $E=\tup{e_1,e_2,\ldots,e_n}$ as a sequence of \emph{statements} $e_i$ of the following types:
\begin{description}
\item[Read] $read(v)$: allocates a location in fast memory and copies variable $v$ from slow to fast memory. 
\item[Write] $write(v)$: copies variable $v$ from fast to slow memory and deallocates the location in fast memory. 
\item[Compute] $compute(\set{v_1,\ldots,v_n})$ is a statement accessing variables $v_1,\ldots,v_n$.
\item[Allocate] $allocate(v)$ introduces variable $v$ in fast memory.
\item[Free] $free(v)$ removes variable $v$ from fast memory.
\end{description}
A sequential execution defines a total order on the statements, and thereby a natural notion of when one statement succeeds or precedes another. 
We say that a Read or Allocate statement and a subsequent Write or Free statement are \emph{paired} if the same variable $v$ appears as an operand in both and there are no intervening Reads, Allocates, Writes, or Frees of $v$.
A sequential execution is considered to be \emph{well formed} if and only if 
\begin{itemize}
\item operands to Read (resp., Write) statements are uncached (resp., cached) slow memory variables,
\item operands to Allocate statements are uncached slow memory variables or fast memory variables\footnote{If a fast memory variable $v$ in an $allocate(v)$ statement already stores a cached slow memory variable, then we assume the system will first relocate the cached variable to an available location within fast memory)},
\item operands to Free statements are cached slow memory variables or fast memory variables,
\item every Read, Allocate, Write, and Free statement is paired, and
\item every Compute statement involving variable $v$ interposes between paired Read/Allocate and Write/{\allowbreak}Free statements of $v$, i.e., each operand $v$ in a Compute statements \emph{resides in fast memory} before and after the statement.
\end{itemize}
Essentially, fast memory variables must be allocated and deallocated, either implicitly (Read/Write) or explicitly (Allocate/Free), while slow memory variables cannot be allocated/deallocated.
Given the finite capacity of fast memory, we need an additional assumption to ensure the memory operations are well-defined.
Given a well-formed sequential execution $E = \tup{e_1,\ldots,e_n}$, we define $footprint_i(E)$ to be the fast memory usage after executing statement $e_i$ in the program, i.e.,
\[
footprint_i(E) = \begin{cases} 
0 & i = 0 \text{ or } i = n
\\ footprint_{i-1}(E) + 1 & \text{if $e_i$ is a Read or Allocate statement,}
\\ footprint_{i-1}(E) & \text{if $e_i$ is a Compute statement, and}
\\ footprint_{i-1}(E) - 1 & \text{if $e_i$ is a Write or Free statement.}
\end{cases}
\]
Then $E$ is said to be \emph{$M$--fit} (for fast memory size $M$) if $\max_i footprint_i(E) \le M$.

 It is of practical interest to permit variables to reside in fast memory before and after the execution, e.g., to handle parallel code as described in the next paragraph; however, this violates our notion of well-formedness.
Rather than redefine well-formedness to account for this possibility, we take a simpler approach: given an execution that is well formed except for the $I$ (`input') and $O$ (`output') variables which reside in fast memory before and after the execution, we insert up to $M$ Reads and $M$ Writes at the beginning and end of the execution so that all memory statements are paired, and then later reduce the lower bound (on Reads/Writes) by $I+O \le 2M$.

%
%
%


Although we will establish our bounds first for a sequential execution, they also apply to parallel executions as explained in \sectn{\ref{sec:gen-machine}}.
We define a \emph{parallel execution} as a set of $P$ sequential executions, $\set{E_1, E_2,\ldots,E_P}$.
In our parallel model, the global (`slow') memory for each processor is a subset of the union of the local (`fast') memories of the other processors.
That is, for a given processor, each of its slow memory variables is really a fast memory variable for some other processor, and each of its fast memory variables is a slow memory variable for every other processor, unless it corresponds to a cached slow memory variable, in which case it is invisible to the other processors.
(We could remove this last assumption by extending our model to distinguish between copies of a slow memory variable.)
A parallel execution is well formed or (additionally) $\set{M_1,\ldots,M_P}$--fit if each of its serial executions $E_i$ is well formed or (additionally) $M_i$--fit. 
Since well-formedness assumes no variables begin and end execution in fast/local memory, it seems impossible for there to be any nontrivial well-formed parallel execution. 
As mentioned above, we can always allow for a sequential execution with this property by inserting up to $I+O \le 2M$ Reads/Writes, and later reducing the lower bound by this amount; we insert Reads/Writes in this manner to each sequential execution $E_i$ in a parallel execution.

\subsection{Relation to the geometric model}
\label{sec:geometric-to-concrete}

Recall from \sectn{\ref{sec:model}} our geometric model \eqref{eqn_AlgModel}: 
\begin{equation*}
\begin{split}
&\text{for all } \indx \in \iters \subseteq \ZZ^d, \text{ in some order}, \\
&\qquad\text{inner\_loop}(\indx, (A_1,\ldots,A_m),(\phi_1,\ldots,\phi_m))
\end{split}
\end{equation*}
The subroutine $\text{inner\_loop}$ represents a given `computation' involving arrays $A_1,\ldots,A_m$ referenced by corresponding subscripts $\phi_1(\indx),\ldots,\phi_m(\indx)$; each $A_j \from \phi_j(\ZZ^d) \to \set{\text{variables}}$ is an injection and each $\phi_j \from \ZZ^d \to \ZZ^{d_j}$ is a $\ZZ$--affine map.  
Each \emph{array variable} $A_1(\phi_1(\indx)),\ldots,A_m(\phi_m(\indx))$ is accessed in each $\text{inner\_loop}(\indx)$, as an input, output, or both, perhaps depending on the iteration $\indx$. 

Given an execution, we assume we can discern the expression $A_j(\phi_j(\indx))$ from any variable which represents an array variable in the geometric model.
The execution may contain additional variables that act as \emph{surrogates} (or copies) of the variables specified in the program text.  
As an extreme example, an execution could use an array $A'_j$ as a surrogate for the array $A_j$ in the computations, and then later set $A_j$ to $A'_j$. 
In such examples, one can always associate each surrogate variable with the `master' copy, and there is no loss of generality in our analysis to assume all variables are in fact the master copies. 

We say a \emph{legal sequential execution} of an instance of the geometric model is a sequential execution whose subsequence of Compute statements can be partitioned into contiguous chunks in one-to-one correspondence with $\iters$, and furthermore all $m$ array variables $A_j(\phi_j(\indx))$ appear as operands in the chunk corresponding to $\indx$.
Given a possibly overlapping partition $\iters=\bigcup_{i=1}^P \iters_i$, a \emph{legal parallel execution} is a parallel execution $\set{E_1,\ldots,E_p}$ where each sequential execution $E_i$ is legal with respect to loop iterations $\indx \in \iters_i$. 
Legality restricts the set of possible concrete executions we consider (for a given instance of the geometric model), and in general is a necessary requirement for the lower bound to hold for all concrete executions.
For example, transforming the classical algorithm for matrix multiplication into Strassen's algorithm (which can move asymptotically less data) is illegal, since it exploits the distributive property to interleave computations, and any resulting execution cannot be partitioned contiguously according to the original iteration space $\iters$.
%
%
%
As another example, legality prevents \emph{loop splitting}, an optimization which can invalidate the lower bound as discussed in \sectn{\ref{sec_LoopSplitting}}.

%

%
%
%

We note that there are no assumptions about preserving dependencies in the original program or necessarily computing the correct answer. 
Restricting the set of executions further to ones that are correct may make the lower bound unattainable, but does not invalidate the bound.



\subsection{Derivation of Communication Lower Bound}
\label{sec:lb_deriv}

Now we present the more formal derivation of the communication lower bound, which was sketched 
in \sectn{\ref{sec:model}}.
The approach used here was introduced in \cite{ITT04} and generalized in \cite{BallardDemmelHoltzSchwartz11}.
Here we generalize it again to deal with the more complicated algorithms considered in this paper.

In this work, we are interested in the \emph{asymptotic} communication complexity, in terms of parameters like the fast memory size $M$ and the number of inner loop body iterations $\niters$.
We treat the algorithm's other parameters, like the number of array references $m$, the dimension of the iteration space $d$, the array dimensions $d_j$, and the coefficients in subscripts $\phi_j$, as constants.
When discussing an upper bound $F$ on the number of loop iterations doable with operands in a fast memory of $M$ words, we assume $F = \Omega(M)$, since $m$ is a constant. 
%
%
So, our asymptotic lower bound may hide a multiplicative factor $c(m,d,\{\phi_1,\ldots,\phi_m\})$, and this constant will also depend on whether any of the arrays $A_1,\ldots,A_m$ overlap.
(Constants are important in practice, and will be addressed in Part~2 of this work.) 
Lastly, we assume $d > 0$, otherwise $\niters \le 1$ and the given algorithm is already trivially communication optimal, and we assume $m > 0$, otherwise there are no arrays and thus no data movement.


Given an $M$--fit legal sequential execution $E=\tup{e_1,\ldots,e_n}$, we proceed as follows:
\begin{enumerate}
  \item Break $E$ into \emph{$M$--Read/Write segments} of consecutive statements, where each segment (except possibly the last one) contains exactly $M$ Reads and/or Writes.
  Each segment (except the last) ends with the $M^\text{th}$ Read/Write and the next segment starts with whatever statement follows.
  The last segment may have statements other than Reads/Writes at the end to complete the execution.
  (We will simply refer to these as Read/Write segments when $M$ is clear from context.)

\item Independently, break $E$ into \emph{Compute segments} of consecutive statements so that the Compute statements within a segment correspond to the same iteration $\indx$. 
(It will not matter that this does not uniquely define the first and last statements of a Compute segment.)
Our assumption of a legal execution guarantees that there is one Compute segment per iteration $\indx$.
We associate each Compute segment with the (unique) Read/Write segment that contains the Compute segment's first Compute statement.

\item Using the limited availability of data in any one Read/Write segment, we will use Theorem~\ref{thm:1} to establish an upper bound $F$ on the number of complete Compute segments that can be executed during one Read/Write segment (see below).
This is an upper bound on the number of complete loop iterations that can be executed.

\item Now, we can bound below the number of complete Read/Write segments by $\lfloor \niters/(F+1) \rfloor$.
We add $1$ to $F$ to account for Compute segments that overlap two (or more) Read/Write segments.
We need the floor function because the last Read/Write segment may not contain $M$ Reads/Writes.
(Since we are doing asymptotic analysis, $\niters$ can often be replaced by the total number of Compute statements.)

\item Finally we bound below the total number of Reads/Writes by the lower bound on the number of 
complete Read/Write segments times the number of Reads/Writes per such segment, minus the number of Reads/Writes we inserted to account for variables residing in fast memory before/after the execution: 
\begin{equation} \label{eqn:comm_lb}
M\left\lfloor \frac{\niters}{F+1} \right\rfloor- (I+O) = \Omega\left(\frac{M\cdot \niters}{F}\right)-O(M),
\end{equation}
where we have applied our asymptotic assumption $F = \Omega (M) =\omega(1)$. 
\end{enumerate}


To determine an upper bound $F$, we will use Theorem~\ref{thm:1} to bound the amount of (useful) computation that can be done given only $O(M)$ array variables.
First, we discuss how to ensure that only a fixed number of array variables is available during a single Read/Write segment. 

Given an $M$--fit legal sequential execution, consider any $M$--Read/Write segment.
There are at most $M$ array variables in fast memory when the segment starts, at most $M$ array variables are read/written during the segment, and at most $M$ array variables remain in fast memory when the segment ends.
If there are no Allocates of array variables, or if there are no Frees of array variables, then at most $2M$ distinct array variables appear in the segment (at most $M$ may already reside in fast memory, and at most $M$ more can be read or allocated).
More generally, if there are no paired Allocate/Free statements of array variables, then at most $3M$ array variables appear in the segment (at most $M$ already reside in fast memory, at most $M$ can be read, and at most $M$ can be allocated).  
However, if we allow array variables to be allocated and subsequently freed, then it is possible to have an unbounded number of array variables contributing to computation in the same segment; this can occur in practice and we give concrete examples in the following section.
Thus, we need an additional assumption in order to obtain a lower bound that is valid for all executions.

We will assume that the execution contains no paired Allocate/Free statements of array variables.
However, we note that of these paired statements, we only need to avoid the ones where both statements occur within a given Read/Write segment; e.g., one could remove the preceding assumption by proving that at least $M$ Read/Write statements (of variables besides $v$) interpose every paired Allocate/Free of an array variable $v$.  
(This weaker assumption is equivalent to an assumption in \cite[\sectn{2}]{BallardDemmelHoltzSchwartz11} that there are no `R2/D2 operands.')

%
%

The communication lower bound is now a straightforward application of Theorem~\ref{thm:1}.

\begin{theorem}\label{Thm4.1}
Consider an algorithm in the geometric model of \eqref{eqn_AlgModel} in \sectn{\ref{sec:model}}, and consider any $M$--fit, legal sequential execution which contains no paired Allocate/Free statements of array variables.
If the linear constraints \eqref{subcriticalhypothesis} of Theorem~\ref{thm:1} are feasible, then for sufficiently large $\niters$, the number of Reads/Writes in the execution is $\Omega (\niters/M^{\shbl-1})$, where $\shbl$ is the minimum value of $\sum_{j=1}^m s_j$ subject to \eqref{subcriticalhypothesis}.
\end{theorem}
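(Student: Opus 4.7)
The plan is to execute exactly the five-step outline sketched by the authors preceding the theorem, with Theorem~\ref{thm:1} supplying the key combinatorial bound. First, partition the execution $E=\tup{e_1,\ldots,e_n}$ into consecutive $M$--Read/Write segments (each containing exactly $M$ Reads or Writes, except possibly the last), and independently partition it into Compute segments, one per iteration $\indx \in \iters$ (this is well defined by legality). Assign each Compute segment to the Read/Write segment containing its first Compute statement; then the number of fully-contained Compute segments inside a fixed Read/Write segment upper-bounds the iterations that can be ``charged'' to that segment, modulo an additive $+1$ for the iteration that may straddle the boundary.

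The heart of the argument is to bound $F$, the number of iterations $\indx$ whose Compute segment fits entirely inside a single $M$--Read/Write segment. Let $E\subseteq\iters$ be this subset. For each $j\in\set{1,\ldots,m}$, legality forces the array variable $A_j(\phi_j(\indx))$ to reside in fast memory during its Compute statement, hence at some point within the Read/Write segment. Counting the sources of these variables: at most $M$ already reside in fast memory at the start of the segment, at most $M$ more can enter via Reads, and under the no--paired--Allocate/Free assumption at most $M$ more can enter via unpaired Allocates. Thus each $|\phi_j(E)| \le 3M = O(M)$. Applying Theorem~\ref{thm:1} with an optimal $s\in\PP$ achieving $\sum_j s_j = \shbl$ (which exists because the constraints are feasible and $\PP$ is a compact polytope), we obtain
\begin{equation*}
|E| \le \prod_{j=1}^m |\phi_j(E)|^{s_j} \le \prod_{j=1}^m (3M)^{s_j} = O\!\left(M^{\shbl}\right) \qec F.
\end{equation*}

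Since $m,d$, and the $\phi_j$ are treated as constants and $\shbl\ge 1$ (as $\shbl\ge d/\max_j d_j \ge 1$ when $d>0$, so that $F=\Omega(M)$), the lower bound on complete Read/Write segments is $\lfloor \niters/(F+1)\rfloor$, each contributing $M$ Reads/Writes. Subtracting the $I+O\le 2M$ boundary Reads/Writes inserted to restore well-formedness yields
\begin{equation*}
M\left\lfloor\frac{\niters}{F+1}\right\rfloor - (I+O) \;=\; \Omega\!\left(\frac{M\cdot\niters}{M^{\shbl}}\right) - O(M) \;=\; \Omega\!\left(\frac{\niters}{M^{\shbl-1}}\right),
\end{equation*}
where the last equality uses ``for sufficiently large $\niters$'' to absorb the $O(M)$ additive term.

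I expect the main subtlety, and the step most likely to need careful writing, to be the bound $|\phi_j(E)|\le 3M$. One must verify that \emph{every} array variable touched by a Compute statement of $\indx \in E$ is genuinely in fast memory during that statement (by well-formedness), and that the only ways such a variable can appear in the segment are the three enumerated above; the no--paired--Allocate/Free hypothesis is precisely what rules out the pathological case of an unbounded supply of ``scratch'' array variables materialized and released inside a single segment, which would otherwise break the bound. A secondary, more routine point is verifying that the injectivity of each $A_j$ lets us identify distinct array variables with distinct points of $\phi_j(E)\subseteq\ZZ^{d_j}$, so that Theorem~\ref{thm:1} applies to the set $E\subseteq\ZZ^d$ as stated.
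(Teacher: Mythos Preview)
Your proposal is correct and follows essentially the same argument as the paper: partition into $M$--Read/Write segments, use the no--paired--Allocate/Free hypothesis to bound the distinct array variables in a segment by $3M$, apply Theorem~\ref{thm:1} with an optimal $s$ to get $F=(3M)^{\shbl}$, and finish via the counting in \eqref{eqn:comm_lb}. One small quibble: your parenthetical justification $\shbl\ge d/\max_j d_j\ge 1$ implicitly assumes $\max_j d_j\le d$, which is not guaranteed; the paper instead obtains $\shbl\ge 1$ directly (Lemma~\ref{lem:sumofexponents}) by observing that any rank-one subgroup would be supercritical otherwise.
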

\begin{proof}
The bound $F$ may be derived from Theorem~\ref{thm:1} as follows.
If $E \subseteq \iters$ is the (finite) set of complete inner loop body iterations executed during a Read/Write segment, then we may bound $|E| \le \prod_{j=1}^m |\phi_j(E)|^{s_j}$ for any $s=\tup{s_1,\ldots,s_m}$ satisfying \eqref{subcriticalhypothesis}.
By our assumptions, each $M$--Read/Write segment has at most $3M$ distinct array variables whose values reside in fast memory.
This implies that $\max_j |\phi_j(E)| \le 3M$, since we allow arrays to alias%
\footnote{If the arrays do not alias, then the tighter constraint $\sum_{j=1}^m |\phi_j(E)| \le 3M$ holds instead, although our bound here is still valid. We will discuss tightening our bounds for nonaliasing arrays in Part~2 of this work.}.
So, $|E| \le \prod_{j=1}^m (3M)^{s_j} = (3M)^{\sum_{j=1}^m s_j}$.
Since this bound applies for any $s$ satisfying \eqref{subcriticalhypothesis}, we choose an $s$ minimizing $\sum_{j=1}^m s_j$, obtaining the tightest bound $|E| \le (3M)^{\shbl} \qec F$.
The communication lower bound is $\Omega(M\cdot\niters/F)-O(M)$; if we assume $\niters=\omega(F)$, i.e., the iteration space is `sufficiently large,' then we obtain $\Omega(\niters/M^{\shbl-1})$ as desired.
\end{proof}
When $\niters = \Theta(F)$, i.e., the problem (iteration space) is not sufficiently large, the lower bound becomes $\Omega(M)-O(M)$, so the subtractive $O(M)$ term may dominate and lead to zero communication; this is increasingly likely as the ratio $\niters/F$ goes to zero.
The parallel case also demonstrates this behavior in the `strong scaling' limit, when the problem is decomposed to the point that each processor's working set fits in their local memory (see \sectn{\ref{sec:gen-machine}}).
In the regime $\niters = O(F)$, a \emph{memory-independent} lower bound \cite{BDHLS12} provides more insight than the bound above.
    Let $W$ be the (unknown) number of Reads/Writes performed, and let $I$ and $O$ be the numbers of input/output variables residing in fast memory before/after execution.
    Then
  \[ \niters \le \prod_j |\phi_j(\iters)|^{s_j} \le (\max_j |\phi_j(\iters)|)^{\shbl} \le (W+I+O)^{\shbl}, \]
 and we have $W \ge \niters^{1/\shbl}-(I+O)$; see \sectn{7.3} for further discussion.
While this bound applies for any $\niters\ge1$, the memory-\emph{dependent} bound in Theorem~\ref{Thm4.1} is tighter when $\niters$ is sufficiently large; when $\niters$ is not so large, the memory-independent bound may be tighter. 
In Part~1 of this work, we are interested in lower bounds for large problems, and so only consider the memory-dependent bound.
In Part~2, we will revisit the case of smaller problems when we discuss the constants hidden in our asymptotic bounds.

\subsection{Examples}
\label{sec:lb_examples}
We give four examples to demonstrate why our assumptions in Theorem~\ref{Thm4.1} are necessary.
Then, we discuss how one can sometimes deal with the presence of imperfectly nested loops, paired Allocate/Free statements, and an infeasible linear program to compute a useful lower bound; we give an example of this approach.

\begin{description}
\item[Example: Modified Matrix Multiplication with paired Allocates/Frees (I).]
The following simple modification of matrix multiplication demonstrates how paired Allocate/Frees can invalidate our lower bound.
\begin{align*}
&\For{i_1}{1}{N} ,\quad \For{i_2}{1}{N} ,\quad \For{i_3}{1}{N} ,\\
&\qquad C(i_1,i_2) = A(i_1,i_3)\cdot B(i_3,i_2) \\
&\qquad t = t + C(i_1,i_2)^7
\end{align*}
Suppose we know the arrays do not alias each other.
Clearly $C$ only depends on the data when $i_3=N$, but we need to do all $N^3$ multiplications to compute $t$ correctly.
But the same analysis from \sectn{\ref{sec:hbl}} applies to these loops as to matrix multiplication, suggesting a sequential communication lower bound of  $\Omega (N^3/M^{1/2})$.
However, it is clearly possible to execute all $N^3$ iterations moving only $O(N^3/M + N^2)$ words, by hoisting the (unblocked) $i_3$ loop outside and blocking the $i_1$ and $i_2$ loops by $M/2$, doing $(M/2)^2$ multiplications in a Read/Write segment using $M/2$ entries each of $A(\cdot,i_3)$ and $B(i_3,\cdot)$, and (over)writing the values $C(i_1,i_2)$ to a single location in fast memory, which is repeatedly Allocated and Freed. 
Only when $i_3=N$ would $C(i_1,i_2)$ actually be written to slow memory. 
So in this case there are a total of $N^3 - N^2$ paired Allocates/Frees, corresponding to the overwritten $C(i_1,i_2)$ operands.
\exend
\item[Example: Modified Matrix Multiplication with Paired Allocate/Frees (II).]
This example also demonstrates how paired Allocate/Frees can invalidate our lower bound.
Consider the following code:
\begin{equation*}
\begin{split}
&\For{i_1}{1}{N} ,\; \For{i_2}{1}{N} ,\quad A(i_1,i_2) = e^{2\pi i (i_1-1)(i_2-1)/N} \\
&\For{i_1}{1}{N} ,\; \For{i_2}{1}{N} ,\; \For{i_3}{1}{N} ,\quad C(i_1,i_2) \plusequals A(i_1,i_3)\cdot B(i_3,i_2)
\end{split}
\end{equation*}
Again, suppose we know that the arrays do not alias each other.
Toward a lower bound, we ignore the initialization of $A$ (first loop nest) and only look at the second loop nest, a matrix multiplication.
However, by computing entries of $A$ on-the-fly from $\indx$ and discarding them (i.e., Allocating/Freeing them), one can beat the lower bound of $\Omega(N^3/M^{1/2})$ words for matrix multiplication.
That is, by hoisting the (unblocked) $i_2$ loop outside and blocking the $i_1$ and $i_3$ loops by $M/2$, and finally writing each $A(i_1,i_3)$ to slow memory when $i_2=N$, we can instead move $O(N^3/M+N^2)$ words.
So, there are $N^3-N^2$ possible paired Allocate/Frees.
%
\exend
\item[Example: Infeasibility.]
This example demonstrates how infeasibility of the linear constraints \eqref{subcriticalhypothesis} of Theorem~\ref{thm:1} can invalidate our lower bound.
Consider the following code:
\begin{align*}
&\For{i_1}{1}{N_1} ,\quad \For{i_2}{1}{N_2} ,\\
&\qquad A(i_1) =\text{func}(A(i_1))
\end{align*}
It turns out the linear constraints  \eqref{subcriticalhypothesis} are infeasible, so we cannot apply Theorem~\ref{thm:1} to find a bound on data reuse of the form $M^{\shbl-1}$. 
It is easy to see that only $2N_1$ Reads and Writes of $A$ are needed to execute the inner loop $N_1 \cdot N_2$ times, i.e., unbounded ($N_2$--fold) data reuse is possible.
In general, infeasibility suggests that the number of available array variables in fast memory during a Read/Write segment is constrained only by the iteration space $\iters$.
(We will explore infeasibility further in \sectns{\ref{sec:bound-nontrivialkernel}~and~\ref{sec:attain-nontrivialkernel}}.)

%
While infeasibility may be sufficient for there to be an unbounded number of array variables in a Read/Write segment, the previous two examples show that it is not necessary, since their linear programs are feasible.
We will be more concrete about this relationship between infeasibility and unbounded data reuse in Part~2.
\exend
\item[Example: Loop interleaving.]
This example demonstrates how an execution which interleaves the inner loop bodies (an \emph{illegal} execution) can invalidate our lower bound; see also \sectn{\ref{sec_LoopSplitting}}.
We will see in \sectn{\ref{sec_LoopSplitting}} that the lower bound for each split loop is no larger than the lower bound for the original loop.
Consider splitting the two lines of the inner loop body in the Complicated Code example (see \sectn{\ref{sec:intro}}) into two disjoint loop nests (each over $\indx \in \set{1,\ldots,N}^6$).
We assume $\text{func}_1$ and $\text{func}_2$ do not modify their arguments, and that the arrays do not alias --- the two lines share only read accesses to one array, $A_3$, so correctness is preserved.
As will be seen later by using Theorem~\ref{thm6.1}, the resulting two loop nests have lower bounds $\Omega(N^6/M^{3/2})$ and $\Omega(N^6/M^2)$, resp., both better than the $\Omega(N^6/M^{8/7})$ of the original, and both these lower bounds are attainable.
\exend
\end{description}


Theorem~\ref{Thm4.1} is enough for many direct linear algebra computations such as (dense or sparse) $LU$ decomposition, which do not have paired Allocate/Frees, but not all algorithms for the $QR$ decomposition or eigenvalue problems, which can potentially have large numbers of paired Allocates/Frees (see \cite{BallardDemmelHoltzSchwartz11}).
We can often deal with interleaving iterations, paired Allocates/Frees, and infeasibility of \eqref{subcriticalhypothesis} by {\em imposing Reads and Writes} \cite[\sectn{3.4}]{BallardDemmelHoltzSchwartz11}: we modify the algorithm to add (``impose'') Reads and Writes of array variables which are allocated/freed or repeatedly overwritten, apply the lower bound from Theorem~\ref{Thm4.1}, and then subtract the number of imposed Reads and Writes to get the final lower bound.
(Note that we have already used a similar technique, above, to allow an execution to begin/end with a nonzero fast memory footprint.)
We give an example of this approach (see also \cite[Corollary~5.1]{BallardDemmelHoltzSchwartz11}).

\begin{exAk}[Part~1/4]
Consider computing $B = A^k$ using the following code, shown (for simplicity) for odd $k$ and initially $B=A$:
\begin{align*}
&\text{// Original Code} \\
&\For{i_1}{1}{\lfloor k/2 \rfloor} \\
&\qquad C = A\cdot B \\
&\qquad B = A\cdot C
\end{align*}
In order to get the correct answer, we assume the arrays do not alias each other.
Under our asymptotic assumption that the number $m$ of arrays accessed in the inner loop is constant with respect to $\niters$, one cannot simply model the code as a $1$--deep loop nest (over $i_1$), since each entry of $A,B,C$ would be considered an array.
Considering instead the scalar multiply/adds as the `innermost loop,' we violate an assumption of the geometric model \eqref{eqn_AlgModel}: since $C = A \cdot B$ needs to be (partially) completed before $B = A \cdot C$ can be computed, the innermost loops necessarily interleave.
Toward obtaining a lower bound, we could try to apply our theory to a perfectly nested subset of the code, omitting $B = A \cdot C$; as will be shown in part~2/4 of this example (see \sectn{\ref{sec:bound}}), the corresponding linear constraints in \eqref{subcriticalhypothesis} are then infeasible, violating another assumption of Theorem~\ref{Thm4.1}.
The same issue arises if we omit $C=A\cdot B$; furthermore, neither modification prevents the possibility of paired Allocates/Frees of array variables of $B,C$.
%
%
We deal with all three violations by rewriting the algorithm in the model \eqref{eqn_AlgModel} and imposing Reads and Writes of all intermediate powers $A^i$ with $1 < i_1 < k$, so at most $2(k-2)N^2$ Reads and Writes altogether. 
This can be expressed by using one array $\hat{A}$ with three subscripts, where $\hat{A}(1,i_2,i_3) \ceq A(i_2,i_3)$ and all other entries are zero-initialized.
\begin{align*}
&\text{// Modified Code (Imposed Reads and Writes)} \\
&\For{i_1}{2}{k} ,\quad \For{i_2}{1}{N} ,\quad \For{i_3}{1}{N} ,\quad \For{i_4}{1}{N} ,\\
&\qquad\hat{A}(i_1,i_2,i_3) \plusequals \hat{A}(1,i_2,i_4) \cdot \hat{A}(i_1-1,i_4,i_3)
\end{align*}
This code clearly matches the geometric model \eqref{eqn_AlgModel}, and admits legal executions (which would be interleaving executions of the original code).
%
%
%
As will be shown in part~3/4 of this example (see \sectn{\ref{sec:bound}}), the linear constraints \eqref{subcriticalhypothesis} are now feasible, and the resulting exponent from Theorem~\ref{Thm4.1} will be $\shbl = 3/2$, so if the matrices are all $N$--by--$N$, the lower bound of the original program will be $\Omega(kN^3/M^{\shbl-1}) - 2(k-2)N^2$.
For simplicity, suppose that this can be rewritten as $\Omega(kN^3/M^{1/2} - kN^2)$.
So we see that when the matrices are small enough to fit in fast memory $M$, i.e., $N \le M^{1/2}$, the lower bound degenerates to $0$, which is the best possible lower bound which is also proportional to the total number of loop iterations $\niters = (k-1)N^3$.
But for larger $N$, the lower bound simplifies to $\Omega(kN^3/M^{1/2})$ which is in fact attained by the natural algorithm that does $k-1$ consecutive (optimal) $N$--by--$N$ matrix multiplications.
 
This example also illustrates that our results will only be of interest for sufficiently large problems, certainly where the floor function in the lower bound \eqref{eqn:comm_lb} is at least $1$.
\exend
\end{exAk}

The above approach covers many but not all algorithms of interest.
We refer to reader to \cite[\sectns{3.4~and~5}]{BallardDemmelHoltzSchwartz11} for more examples of imposing Reads and Writes, and \cite[\sectn{4}]{BallardDemmelHoltzSchwartz11} on orthogonal matrix factorizations for an important class of algorithms  where a subtler analysis is required to deal with paired Allocates/Frees.

Imposing Reads and Writes may fundamentally alter the program, so the lower bound obtained for the modified code need not apply to the original code.
In the example above, one could reorder\footnote{For simplicity, and without reducing data movement, this code performs $(k-1)N^2$ additional $+$ operations.} the original code to
\begin{align*}
&\For{i_2}{1}{N} ,\quad \For{i_3}{1}{N} ,\quad \For{i_4}{1}{N} ,\quad \For{i_1}{1}{\lfloor k/2 \rfloor} ,\\
&\qquad C(i_2,i_3) \plusequals A(i_2,i_4)\cdot B(i_4,i_3) \\
&\qquad B(i_2,i_3) \plusequals A(i_2,i_4)\cdot C(i_4,i_3).
\end{align*}
Recall that our lower bounds are valid for any reordering of the iteration space, correct or otherwise.
Since $i_1$ does not appear in the inner loop body, data movement is independent of $k$.
In fact, this code moves $O(N^3/M^{1/2})$ words, asymptotically beating the lower bound for the modified code (with imposed Reads/Writes).
For another example, see \cite[\sectn{5.1.3}]{BallardDemmelHoltzSchwartz11}.
In Part~2, we will present an alternative to imposing Reads/Writes which can yield a (nontrivial) lower bound on the original code: roughly speaking, one ignores the loops whose indices do not appear in any subscripts (e.g., $i_1$, above).
While this approach will eliminate infeasibility, paired Allocates/Frees may still arise and imposing Reads/Writes may still be necessary.

\subsection{Loop Splitting Can Only Help}
\label{sec_LoopSplitting}
Here we show that loop splitting can only reduce (improve) the communication lower
bound expressed in Theorem~\ref{Thm4.1}. More formally, we state this as the following.

\begin{theorem}\label{Thm4.2}
Suppose we have an algorithm satisfying the hypotheses of Theorem~\ref{Thm4.1}, with lower bound determined by the value $\shbl$.
Also suppose that this algorithm can be rewritten as $t>1$ consecutive (disjoint) loop nests, where each has the same iteration space as the original algorithm but accesses a subset of the original array entries, and each satisfies the hypotheses of Theorem~\ref{Thm4.1}, leading to exponents $\shbla{i}$ for $i\in\set{1,\ldots,t}$.
Then each $\shbla{i} \ge \shbl$, i.e., the communication lower bound
for each new loop nest is asymptotically at least as small as the
communication lower bound for the original algorithm.
\end{theorem}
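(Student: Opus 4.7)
The plan is to exploit the linear-programming characterization of $\shbl$ and $\shbla{i}$ given by Theorem~\ref{thm:1}, and show that the feasible region for the $i$-th split sub-nest injects (by zero-extension) into the feasible region for the original algorithm while preserving the objective value. This immediately forces $\shbl \le \shbla{i}$, which is exactly the desired conclusion.

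First, I would write out the two LPs explicitly. Applying Theorem~\ref{thm:1} (via Theorem~\ref{Thm4.1}) to the original algorithm gives
\[
\shbl \;=\; \min\Bigl\{\sum_{j=1}^m s_j \,:\, s\in[0,1]^m,\ \Rank{H}\le\sum_{j=1}^m s_j\Rank{\phi_j(H)} \text{ for all } H\subim\ZZ^d\Bigr\}.
\]
Let $S_i\subseteq\set{1,\ldots,m}$ be the set of arrays accessed by the $i$-th split loop nest. Since the iteration space $\iters\subseteq\ZZ^d$ is unchanged by the split, the relevant affine maps are exactly the original maps $\phi_j$ restricted to $j\in S_i$, so applying Theorem~\ref{thm:1} to the $i$-th sub-nest yields
\[
\shbla{i} \;=\; \min\Bigl\{\sum_{j\in S_i} s_j \,:\, s\in[0,1]^{|S_i|},\ \Rank{H}\le\sum_{j\in S_i} s_j\Rank{\phi_j(H)} \text{ for all } H\subim\ZZ^d\Bigr\}.
\]

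The core step is the following zero-extension argument. Fix an optimal solution $s^{(i)}=(s^{(i)}_j)_{j\in S_i}$ for the $i$-th sub-LP, and define $\tilde s\in[0,1]^m$ by $\tilde s_j = s^{(i)}_j$ for $j\in S_i$ and $\tilde s_j = 0$ for $j\notin S_i$. Then for every subgroup $H\subim\ZZ^d$,
\[
\sum_{j=1}^m \tilde s_j\Rank{\phi_j(H)} \;=\; \sum_{j\in S_i} s^{(i)}_j\Rank{\phi_j(H)} \;\ge\; \Rank{H},
\]
using that $s^{(i)}$ is feasible for the sub-LP. Hence $\tilde s$ is feasible for the original LP, and its objective value equals $\sum_{j\in S_i} s^{(i)}_j = \shbla{i}$. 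Minimality of $\shbl$ over feasible points of the original LP then gives $\shbl\le\shbla{i}$, i.e., $M^{\shbl-1}\le M^{\shbla{i}-1}$, so the lower bound $\Omega(\niters/M^{\shbla{i}-1})$ for each split nest is asymptotically no larger than the bound $\Omega(\niters/M^{\shbl-1})$ for the original.

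There is essentially no serious obstacle: the proof is a one-line LP monotonicity argument once the two linear programs are written down correctly. The only subtlety to verify is that the maps $\phi_j$ for $j\in S_i$ really do coincide with those in the original datum (which is immediate from the hypothesis that the iteration space is preserved), and that $\tilde s$ still lies in $[0,1]^m$ after zero-extension (trivially true since $0\in[0,1]$). I would also briefly note that feasibility of the original LP is preserved in the sub-LP only if $S_i$ is chosen so that Theorem~\ref{Thm4.1}'s hypotheses hold for the sub-nest, which is part of the theorem's assumptions; feasibility in the \emph{reverse} direction (sub-LP feasible $\Rightarrow$ original LP feasible) is exactly what the zero-extension shows and is precisely what drives the inequality.
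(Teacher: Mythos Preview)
Your argument is correct and in fact cleaner than the paper's. Both proofs rest on the same underlying LP monotonicity, but the paper works on the \emph{dual} side: it first reduces both LPs to a common finite constraint set $\mathcal{S}^*$, passes to the duals $\max r^T x$ subject to $\Delta x\le\mathbf{1}_m$ and $\Delta_{L_i}x\le\mathbf{1}_{|L_i|}$, and observes that the second dual has a subset of the first's constraints, hence a larger optimum. Your zero-extension is the primal counterpart of that observation: padding an optimal $s^{(i)}$ with zeros produces a feasible point of the full primal with the same objective, so $\shbl\le\shbla{i}$ directly. Your route avoids both the reduction to finitely many subgroups and the appeal to duality, and it works verbatim even with the infinite constraint family, so it is strictly more elementary. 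The paper's detour through duality buys nothing extra here; it is just a different bookkeeping of the same inclusion of feasible regions.
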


\begin{proof}
By our assumptions, $\shbl$ is the minimum value of $\sum_{j=1}^m s_j$
where $s\in[0,1]^m$ satisfies the 
constraints \eqref{subcriticalhypothesis}.
We need to prove that if we replace these constraints by 
\begin{equation}\label{subcriticalsubset}
\Rank{H}\le \sum_{j \in L_i} s_j\Rank{\phi_j(H)}
\qquad\text{for all subgroups } H \subim \ZZ^d,
\end{equation}
where $L_i$ is any given nonempty subset of $\set{1,\ldots,m}$
corresponding to the $i^\text{th}$ (of $t$) new loop nest,
then the minimum value of
$\shbla{i} = \sum_{j \in L_i} s_j$ for any $s$ satisfying \eqref{subcriticalsubset}
must satisfy $\shbla{i} \ge \shbl$.
We proceed by replacing both sets of constraints by a common finite set,
yielding linear programs, and then use duality.

As mentioned immediately after the statement of Theorem~\ref{thm:1}, even though
there are infinitely many subgroups $H \subim \ZZ^d$, there are only finitely
many possible values of each rank, so we may choose a finite set ${\cal S}$
of subgroups $H \subim \ZZ^d$ that yield all possible constraints 
\eqref{subcriticalhypothesis}.
Similarly, there is a finite set of subgroups ${\cal S}_i$ that yields all
possible constraints \eqref{subcriticalsubset}. 
Now let ${\cal S}^* = {\cal S} \cup {\cal S}_i$; we may therefore
replace both \eqref{subcriticalhypothesis} and \eqref{subcriticalsubset}
by a finite set of constraints, for the subgroups 
$H \in {\cal S}^* = \{H_1,\ldots,H_g\}$.

This lets us rewrite \eqref{subcriticalhypothesis} as the linear program
of minimizing $\shbl = \sum_{j=1}^m s_j = 1_m^T \cdot s$
subject to $s^T \cdot \Delta \geq r^T$, where 
$s = (s_1,\ldots,s_m)^T$, $1_m$ is a column vector of ones,
$r = (\Rank{H_1},\ldots,\Rank{H_g})^T$,
and $\Delta$ is $m$--by--$g$ with $\Delta_{ij} = \Rank{\phi_i(H_j)}$.
Assuming without loss of generality that the constraints $L_i$ are the
first constraints $L_i = \set{1,2,\ldots,|L_i|}$,
we may similarly rewrite \eqref{subcriticalsubset} as the linear program
of minimizing $\shbla{i} = \sum_{j=1}^{|L_i|} s_j = 1_{|L_i|}^T \cdot s_{L_i}$
subject to $s_{L_i}^T \cdot \Delta_{L_i} \ge r^T$, where $\Delta_{L_i}$, $s_{L_i}$ and $1_{|L_i|}$ 
consist of the first $|L_i|$ rows of $\Delta$, $s$ and $1_m$, resp.
The duals of these two linear programs, which have the same optimal
values as the original linear programs, are
\begin{equation} \label{eqn_Dual}
{\rm maximize} \; \; r^T \cdot x \; \;  {\rm subject\ to} \; \;  \Delta \cdot x \le 1_m
\end{equation}
and
\begin{equation} \label{eqn_Dualsubset}
{\rm maximize} \; \; r^T \cdot x_{L_i} \; \; {\rm subject\ to} \; \;  \Delta_{L_i} \cdot x_{L_i} \le 1_{|L_i|}
\end{equation}
resp., where both $x$ and $x_{L_i}$ have dimension $g$.
It is easy to see that \eqref{eqn_Dualsubset} is maximizing the same quantity 
as \eqref{eqn_Dual}, but subject to a subset of the constraints of \eqref{eqn_Dual}, 
so its optimum value, $\shbla{i}$, must be at last as large as the other optimum, 
$\shbl$.
\end{proof}

While loop splitting appears to always be worth attempting, in practice data dependencies limit our ability to perform this optimization; we will discuss the practical aspects of loop splitting further in Part~2 of this work.

\subsection{Generalizing the machine model}
\label{sec:gen-machine}
Earlier we said the reader could think of a sequential algorithm where the fast memory consists of a cache of $M$ words, and slow memory is the main memory. 
In fact, the result can be extended to the following situations:
\begin{description}
\item[Multiple levels of memory.] 
If a sequential machine has a memory hierarchy, i.e., multiple levels of cache (most do), where data may only move between adjacent levels, and arithmetic done only on the ``top'' level, then it is of interest to bound the data transferred between every pair of adjacent levels, say $i$ and $i+1$, where $i$ is higher (faster and closer to the arithmetic unit) than $i+1$.
In this case we apply our model with $M$ representing the total memory available in levels 1 through $i$, typically an increasing function of $i$.
\item[Homogeneous parallel processors.]
 We call a parallel machine {\em homogeneous} if it consists of $P$ identical processors, each with its own memory, connected over some kind of network.
  For any processor, fast memory is the memory it owns, and Reads and Writes refer to moving data over the network from or to the other processors' memories. 
  Recalling notation from above, consider an $\{M,\ldots,M\}$--fit, legal parallel execution $\tup{E_1,\ldots,E_P}$, i.e., a set of $M$--fit, legal sequential executions $E_i$ (with corresponding $\iters_i \subseteq \iters$); assuming there are no paired Allocate/Frees, we can apply Theorem~\ref{Thm4.1} to each  and obtain the lower bound of $\Omega(|\iters_i|/M^{\shbl - 1})$ words moved.
  As argued in \cite{BDHLS12}, to minimize computational costs, necessarily
  at least one of the processors performs $\niters/P$ (distinct) iterations; since a lower bound for this processor is a lower bound for the critical path, we take $|\iters_i| = \niters/P$, and obtain the lower bound of $\Omega(\niters/(PM^{\shbl - 1}))$ words moved (along the critical path).
  
  We recall that Theorem~\ref{Thm4.1} makes an asymptotic assumption on $\niters$; having replaced $\niters$ by $\niters/P$ in the parallel case, this assumption becomes $\niters/P = \omega(M^\sigma)$.
   When this assumption fails (e.g., $\niters$ is small or $P$ is large), it may be possible for each processor to store its entire working set locally, with no communication needed.
  As in the sequential case (see above), one may obtain a memory-independent lower bound  $W = (\niters/P)^{1/\shbl}-(I+O)$, which may be tighter in this regime; this result generalizes \cite[Lemma~2.3]{BDHLS12}.
  Interestingly, one can show that under certain assumptions on $I$ and $O$ (e.g., only one copy of the inputs/outputs is permitted at the beginning/end), while the communication cost continues to decrease with $P$ (up to the natural limit $P=\niters$), it fails to strong-scale perfectly when the assumption on $\niters/P$ breaks; e.g., see \cite[Corollary~3.2]{BDHLS12}.
  We will discuss strong scaling limits further in Part~2 of this work.

 One may also ask what value of $M$ to use for each processor.
  Suppose that each processor has $M_\text{max}$ words of fast memory, and that the total problem size of all the array entries accessed is $M_\text{array}$. 
  So if each processor gets an equal share of the data we use $M = M_\text{array}/P \leq M_\text{max}$.
But the lower bound may still apply, and be smaller, if $M$ is larger than $M_\text{array}/P$ (but at most $M_\text{max}$).
In some cases algorithms are known that attain these smaller lower bounds (e.g., matrix multiplication in \cite{2.5D_EuroPar}), i.e., replicating data can reduce communication.

In \sectn{7.3}, we discuss attainability of these parallel lower bounds, and reducing communication by replicating data.
\item[Hierarchical parallel processors.] The simplest possible hierarchical machine is the sequential one with multiple levels of memory discussed above.
But real parallel machines are similar: each processor has its own memory organized in a hierarchy. 
So just as we applied our lower bound to measure memory traffic between levels $i$ and $i+1$ of cache on a sequential machine, we can similarly analyze the memory hierarchy on each processor in a parallel machine.
\item[Heterogeneous machines.] Finally, people are building heterogeneous parallel machines, where the various processors, memories, and interconnects can have different speeds or sizes. 
  Since minimizing the total running time means minimizing the time when the last processor finishes, it may no longer make sense to assign an equal fraction $\niters/P$ of the work and equal subset of memory $M$ to each processor. 
  Since our lower bounds apply to each processor independently, they can be used to formulate an optimization problem that will give the optimal amount of work to assign to each processor \cite{Hetero_SPAA11}.
\end{description}

\section{(Un)decidability of the communication lower bound} 
\label{sec:undecidability}

In \sectn{\ref{sec:hbl}}, we proved Theorem~\ref{thm:1}, which tells us that the exponents $s\in[0,1]^m$ satisfy the inequalities \eqref{subcriticalhypothesis}, i.e.,
\begin{equation*}
\Rank{H} \leq \sum_{j=1}^m s_j \Rank{\phi_j(H)} \qquad \text{for all subgroups } H \subim \ZZ^d,
\end{equation*}
precisely when the desired bound \eqref{mainconclusion} holds.
If so, then following Theorem~\ref{Thm4.1}, the sum of these exponents $\shbl \ceq \sum_{j=1}^m s_j$ leads to a communication lower bound of $\Omega(\niters/M^{\shbl-1})$. 
Since our goal is to get the tightest bound, we want to minimize $\sum_{j=1}^m s_j$ subject to \eqref{subcriticalhypothesis} and $s\in[0,1]^m$.
In this section, we will discuss computing the set of inequalities \eqref{subcriticalhypothesis}, in order to write down and solve this minimization problem.

We recall from \sectn{\ref{sec:P}} that the feasible region for $s$ (defined by these inequalities) is a convex polytope $\PP \subseteq [0,1]^m$ with finitely many extreme points.
While $\PP$ is uniquely determined by its extreme points, there may be many sets of inequalities which specify $\PP$; thus, it suffices to compute any such set of inequalities, rather than the specific set \eqref{subcriticalhypothesis}.
This distinction is important in the following discussion.
In \sectn{\ref{sec:computeP}}, we show that there is an effective algorithm to determine $\PP$.
However, it is not known whether it is decidable to compute the set of inequalities \eqref{subcriticalhypothesis} which define $\PP$ (see \sectn{\ref{sec5.1}}).
In \sectn{\ref{sec6.4}}, we discuss two approaches for approximating $\PP$, providing upper and lower bounds on the desired $\shbl$.


\subsection{An Algorithm Which Computes $\PP$}
\label{sec:computeP}
We have already shown in Lemma~\ref{lem:PZequalsPQ} that the polytope $\PP$ is unchanged when we embed the groups $\ZZ^d$ and $\ZZ^{d_j}$ into the vector spaces $\QQ^d$ and $\QQ^{d_j}$ and consider the homomorphisms $\phi_j$ as $\QQ$--linear maps.
Thus it suffices to compute the polytope $\PP$ corresponding to the inequalities
\begin{equation*}
\Dim{V} \leq \sum_{j=1}^m s_j \Dim{\phi_j(V)} \qquad \text{for all subgroups } V \subim \QQ^d.
\end{equation*}
Indeed, combined with the constraints $s\in[0,1]^m$, this is the hypothesis \eqref{subcriticalhypothesisfield} of Theorem~\ref{thm:field} in the case $\FF=\QQ$.

We will show how to compute $\PP$ in the case $\FF = \QQ$; for the remainder of this section, $V$ and $V_j$ denote finite-dimensional vector spaces over $\QQ$, and $\phi_j$ denotes a $\QQ$--linear map.
We note that the same reasoning applies to any countable field $\FF$, provided that elements of $\FF$ and the field operations are computable.

%
%
%
%

\begin{theorem} \label{thm:decision}
There exists an algorithm which takes as input any vector space HBL datum over 
the rationals, i.e., $\FF=\QQ$, and returns as output both a list of finitely many linear inequalities over $\ZZ$ which jointly specify the associated polytope $\PP\tup{V,\tup{V_j},\tup{\phi_j}}$, and a list of all extreme points of $\PP\tup{V,\tup{V_j},\tup{\phi_j}}$.
\end{theorem}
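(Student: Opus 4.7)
The plan is to reduce the problem to computing the finite set of dimension profiles realized by subspaces of $V$, and then to translate this into inequality and vertex descriptions of $\PP$. Write
\[
\mathcal{D} \ceq \dset{\dtup{\Dim{W},\Dim{\phi_1(W)},\ldots,\Dim{\phi_m(W)}} : W \subim V} \subseteq \dset{0,\ldots,d} \times \prod_{j=1}^m \dset{0,\ldots,d_j}.
\]
As observed in \sectn{\ref{sec:P}}, $\PP\tup{V,\tup{V_j},\tup{\phi_j}}$ is cut out by the box constraints $0 \le s_j \le 1$ together with one inequality $r \le \sum_j s_j r_j$ for each tuple $(r,r_1,\ldots,r_m)\in\mathcal{D}$, and there are at most $(d+1)\prod_j(d_j+1)$ such tuples. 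All coefficients are integers, so once $\mathcal{D}$ has been determined, the required inequality list is immediate.

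To compute $\mathcal{D}$, I would iterate over all candidate tuples $(r,r_1,\ldots,r_m)$ in the finite range above, and for each decide whether there exists a $\QQ$--subspace $W \subim V$ realizing it. Fixing bases, this amounts to asking whether there is a $d$--by--$r$ matrix $U$ over $\QQ$ with $\Rank U = r$ and $\Rank(A_j U) = r_j$ for each $j$, where $A_j$ is the matrix representing $\phi_j$. I would construct $U$ column by column. Suppose columns $u_1,\ldots,u_k$ have been chosen with partial ranks $(k;\rho_1,\ldots,\rho_m)$; the permissible set of next columns is carved out by finitely many determinantal vanishing conditions (to hold a partial rank fixed) and non-vanishing conditions (to bump a partial rank up by one), all defined over $\QQ$. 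Since $\QQ$ is infinite, whenever this set is nonempty as a $\QQ$--subvariety of $\QQ^d$, a rational representative can be located by inspecting finitely many rational candidates on a Zariski--dense line.

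The main obstacle will be establishing completeness of this greedy procedure: that whenever the target profile is realized by \emph{some} subspace, there is a sequence of admissible greedy choices leading to it. I would handle this by analyzing the relevant strata of the Grassmannian $\text{Gr}(r,V)$, which are locally closed subsets defined by rank conditions on the $A_j$. These determinantal strata are rational (unirational) over any infinite field on which they are nonempty, so nonemptyness over $\overline{\QQ}$ already forces the existence of a $\QQ$--point; in particular, each successful greedy step lands in an open subset of an irreducible component defined over $\QQ$. Equivalently, and more concretely, one can show by induction on the number of columns already placed that either the stratum of completions is nonempty and any generic rational column extends the partial choice, or the stratum is empty and the procedure rightly fails. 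As an alternative that sidesteps rationality, one can phrase feasibility as a sentence in the first--order theory of algebraically closed fields (quantifying the entries of $U$ with rank conditions expressed by determinants) and appeal to classical quantifier elimination over $\overline{\QQ}$, then invoke the same rationality of determinantal strata to transfer existence back to $\QQ$.

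Finally, once the finite list of inequalities defining $\PP$ is produced, I would feed it into any standard vertex--enumeration procedure for rational polyhedra (for instance the double description method), which in finite time returns the exact list of extreme points as rational vectors. Since Lemma~\ref{lem:PZequalsPQ} tells us that $\PP\tup{G,\tup{G_j},\tup{\phi_j}} = \PP\tup{\QQ^d,\tup{\QQ^{d_j}},\tup{\psi_j}}$, the same algorithm applies without change to Abelian group HBL data, completing the construction.
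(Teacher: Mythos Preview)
Your approach has a genuine gap, and it is precisely the gap the paper isolates in Theorem~\ref{thm4.1}. You propose to compute the set $\mathcal{D}$ of realized dimension profiles by deciding, for each candidate tuple $(r,r_1,\ldots,r_m)$, whether some $\QQ$--subspace $W$ attains it. But this decision problem is exactly membership in the set $E_{d;r,r_1,\ldots,r_m}^\QQ$ of \sectn{\ref{sec5.1}}, and the paper proves that an effective algorithm for it would yield an effective algorithm for Hilbert's Tenth Problem over $\QQ$, which is a well-known open problem. Your assertion that ``these determinantal strata are rational (unirational) over any infinite field on which they are nonempty, so nonemptyness over $\overline{\QQ}$ already forces the existence of a $\QQ$--point'' is therefore false in general: the sufficiency direction of Theorem~\ref{thm4.1} shows explicitly how to encode an arbitrary system of polynomial equations over $\QQ$ as a simultaneous rank condition on several linear maps, so these strata can be as arithmetically complicated as any affine $\QQ$--variety. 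The greedy column-by-column construction and the quantifier-elimination alternative both rely on this rationality claim and inherit the same defect.

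The paper's proof avoids computing $\mathcal{D}$ altogether. Instead it enumerates subspaces $W_1,W_2,\ldots$, forms the outer approximations $\PP_N\supseteq\PP$, and checks whether each extreme point $\tau$ of $\PP_N$ lies in $\PP$. The key structural facts are Proposition~\ref{prop:0or1} (every extreme point either has a coordinate in $\{0,1\}$ or admits a proper critical subspace) and Lemma~\ref{lemma:factorization} (a critical subspace factors the datum into two strictly smaller data). These allow membership $\tau\in\PP$ to be decided by recursion on $\Dim V$ and on $m$ (Proposition~\ref{prop:subalg} and Lemma~\ref{lemma:cleanup}), without ever deciding which tuples belong to $\mathcal{D}$. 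Termination comes from Lemma~\ref{lemma:musthalt}. The upshot is that $\PP$ can be computed even though the exact constraint set \eqref{subcriticalhypothesis} may not be; your proposal conflates these two problems.
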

\begin{remark} \label{rmk:V07}
The algorithm we describe below to prove Theorem~\ref{thm:decision} is highly inefficient, because of its reliance on a search of arbitrary subspaces of $V$.  
A similar algorithm was sketched in \cite{Valdimarsson07} for computing the polytope in \cite[Theorem~2.1]{BCCT10}, a result related to Theorem~\ref{thm:field} in the case $\FF=\RR$ but where the vector spaces $V,V_j$ are endowed with additional inner-product structure.
That algorithm searches a smaller collection of subspaces, namely the lattice generated by the nullspaces of $\phi_1,\ldots,\phi_m$. 
In Part~2 of this work (see \sectn{\ref{sec:conclusions}}) we will show that it suffices to search this same lattice in our case; this may significantly improve the efficiency of our approach.
Moreover, this modification will also allow us to relax the requirement that $\FF$ is countable, although this is not necessary for our application.
\end{remark}

The proof of Theorem~\ref{thm:decision} is built upon several smaller results.

\begin{lemma} \label{lemma:enumeratesubspaces}
There exists an algorithm which takes as input a finite-dimensional vector space $V$ over 
$\QQ$, and returns a list of its subspaces.
More precisely, this algorithm takes as input a finite-dimensional vector space $V$ and a positive integer $N$, and returns as output the first $N$ elements $W_i$ of a list $\tup{W_1,W_2,\ldots}$ of all subspaces of $V$.
This list is independent of $N$.
Each subspace $W$ is expressed as a finite sequence $\tup{d;w_1,\ldots,w_d}$ where $d=\Dim{W}$ and $\set{w_i}$ is a basis for $W$.
\end{lemma}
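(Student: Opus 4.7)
The plan is to reduce the problem to two well-known facts: that $\QQ$ is computably enumerable with decidable equality and arithmetic, and that Gauss--Jordan elimination over $\QQ$ is effective. With these in hand, I will build a canonical form for subspaces, then enumerate by brute-force search through finite tuples of vectors and de-duplicate.

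First, I would fix once and for all an isomorphism $V \cong \QQ^n$ where $n = \Dim{V}$ (using any chosen basis of $V$, which is part of the input description). Since $\QQ$ is computably enumerable, so is $\QQ^n$; combining this with a standard pairing function such as Cantor's, I obtain a computable enumeration $\tup{\sigma_1,\sigma_2,\ldots}$ of all finite sequences of vectors in $\QQ^n$. This sequence is fixed and independent of $N$.

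Second, I would give a canonical form for subspaces. Given any finite sequence $\tup{v_1,\ldots,v_k}$ of vectors in $\QQ^n$, form the $k\times n$ matrix with these rows and compute its reduced row echelon form by Gauss--Jordan elimination over $\QQ$; after deleting zero rows, we obtain a sequence $\tup{w_1,\ldots,w_d}$ whose entries lie in $\QQ$ and which depends only on the subspace $W = \text{span}(v_1,\ldots,v_k)$ (this is the classical uniqueness of RREF). Call this the canonical encoding of $W$ and write it as $\tup{d;w_1,\ldots,w_d}$. Equality of two subspaces is then decidable by comparing canonical encodings entrywise in $\QQ$. In particular, the zero subspace has canonical encoding $\tup{0}$, which corresponds to the empty tuple or any tuple consisting only of zero vectors.

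Third, the enumeration algorithm processes $\sigma_1,\sigma_2,\ldots$ in order, maintaining a growing list $L$ of canonical encodings emitted so far. For each $\sigma_i$, compute the canonical encoding of $\text{span}(\sigma_i)$; if it does not appear in $L$, append it to $L$ and emit it; otherwise discard. On input $N$, halt after $N$ encodings have been emitted and return them. The emitted list is independent of $N$ since the enumeration of $\sigma_i$ and the decisions made are. Completeness follows because every subspace $W \subim V$ admits a finite basis $\tup{w_1,\ldots,w_d}$, which equals some $\sigma_i$ in the enumeration, so the canonical encoding of $W$ will be emitted at stage $i$ at the latest. There is no real obstacle here; the only minor care needed is to ensure that the enumeration of $\sigma_i$ includes the empty tuple (or a tuple of zero vectors) so that the zero subspace is produced, and to handle Gaussian elimination on the empty matrix as yielding $d=0$.
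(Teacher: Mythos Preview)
Your proposal is correct and follows essentially the same brute-force enumeration approach as the paper: list finite tuples of vectors in $\QQ^n$ (which is countable and computably enumerable) and process each by elementary linear algebra. The only difference is that you canonicalize via RREF and de-duplicate, whereas the paper simply filters for linear independence and explicitly permits the resulting list to contain redundancies; your extra care is fine but not needed for the downstream application.
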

\begin{proof}
Generate a list of all nonempty subsets of $V$ having at most $\Dim{V}$ elements.
Test each subset for linear independence, and discard all which fail to be independent.
Output a list of those which remain.
\end{proof}
We do not require this list to be free of redundancies.

\begin{lemma} \label{lemma:enumeratevertices}
For any positive integer $m$, there exists an algorithm which takes as input a finite set of linear inequalities over $\ZZ$ for $s\in[0,1]^m$, and returns as output a list of all the extreme points of the convex subset $\PP\subseteq [0,1]^m$ specified by these inequalities.
\end{lemma}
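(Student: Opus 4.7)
The plan is to reduce the problem to a classical exact-arithmetic enumeration. Let the input consist of a finite set of linear inequalities with integer coefficients; augmenting this set with the box constraints $s_j \ge 0$ and $s_j \le 1$ for each $j \in \{1,\ldots,m\}$ yields a finite family $\mathcal{C} = \{c_1,\ldots,c_N\}$ of inequalities whose solution set is exactly $\PP$. Since all coefficients and right-hand sides lie in $\ZZ$, every vertex of $\PP$ has rational coordinates.

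I would rely on the standard characterization of extreme points of a polytope: a point $s \in \PP$ is an extreme point if and only if the set of inequalities in $\mathcal{C}$ that are active (hold with equality) at $s$ contains $m$ that are linearly independent. Hence every extreme point is the unique solution of some $m$--by--$m$ system obtained by selecting $m$ constraints from $\mathcal{C}$ and replacing each selected inequality with the corresponding equality. This gives the algorithm: enumerate all $\binom{N}{m}$ such subsets; for each subset, perform Gaussian elimination over $\QQ$ on the resulting $m \times m$ integer system; if the system has a unique solution $s^* \in \QQ^m$, test whether $s^*$ satisfies every inequality in $\mathcal{C}$ (a finite sequence of rational comparisons), and if so add $s^*$ to the output list. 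After iterating over all subsets, delete duplicate entries from the list. Each step is effective because rational arithmetic is exact and decidable.

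Correctness is immediate from the characterization above: every extreme point of $\PP$ arises as the unique solution of at least one of the selected square subsystems (so it is produced by the algorithm), and conversely every point produced lies in $\PP$ (by the feasibility check) and is the unique solution of $m$ linearly independent active constraints (by construction), so it is extreme. Termination is clear because the number of subsets is finite and each subsystem is solved in finitely many rational operations.

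There is no genuine obstacle here; the main thing to get right is the characterization of extreme points in the presence of possibly redundant or linearly dependent active constraints, which is handled by requiring the selected $m$ constraints themselves to form a nonsingular system (detected automatically by Gaussian elimination yielding a unique solution). The procedure is exponential in $m$ and $N$, but efficiency is not required for the statement; Remark~\ref{rmk:V07} already signals that a more efficient enumeration will be developed in Part~2.
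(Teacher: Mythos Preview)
Your proposal is correct and follows essentially the same approach as the paper: adjoin the $2m$ box constraints, invoke the characterization of extreme points as unique solutions of $m$ linearly independent active constraints, enumerate all $m$-element subsets of the constraint set, solve each nonsingular subsystem over $\QQ$, and retain those solutions that are feasible. The paper's proof is nearly identical in structure and detail; your explicit mention of duplicate removal and exact rational arithmetic is a minor elaboration rather than a substantive difference.
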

\begin{proof}
To the given family of inequalities, adjoin the $2m$ inequalities $s_j\ge 0$ and $-s_j\ge -1$. 
$\PP$ is the convex polytope defined by all inequalities in the resulting enlarged family. 
Express these inequalities as $\langle s,v_\alpha\rangle\ge c_\alpha$ for all $\alpha\in A$, where $A$ is a finite nonempty index set.

An arbitrary point $\tau\in\RR^m$ is an extreme point of $\PP$ if and only if 
(i) there exists a set $B$ of indices $\alpha$ having cardinality $m$, such that $\set{v_\beta: \beta\in B}$ is linearly independent and $\langle \tau,v_\beta\rangle = c_\beta$ for all $\beta\in B$, and 
(ii) $\tau$ satisfies $\langle \tau,v_\alpha\rangle\ge c_\alpha$ for all $\alpha\in A$.

Create a list of all subsets $B \subset A$ 
with cardinality equal to $m$.
There are finitely many such sets, since $A$ itself is finite.
Delete each one for which $\set{v_\beta: \beta\in B}$ is not linearly independent. 
For each subset $B$ not deleted, compute the unique solution $\tau$ of the system of equations $\langle \tau,v_\beta\rangle = c_\beta$ for all $\beta\in B$. 
Include $\tau$ in the list of all extreme points, if and only if $\tau$ satisfies $\langle \tau,v_\alpha\rangle \ge c_\alpha$ for all $\alpha\in A\setminus B$. 
\end{proof}

\begin{proposition} \label{prop:subalg}
There exists an algorithm which takes as input a vector space HBL datum $\tup{V,\tup{V_j},\tup{\phi_j}}$, an element $t\in[0,1]^m$, and a subspace $\set{0} \subpr W \subpr V$ which is critical with respect to $t$, and determines whether $t\in\PP\tup{V,\tup{V_j},\tup{\phi_j}}$.
\end{proposition}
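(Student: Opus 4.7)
The plan is to apply Lemmas~\ref{lemma:factorization1} and~\ref{lemma:factorization} to reduce membership in $\PP\tup{V,\tup{V_j},\tup{\phi_j}}$ to membership in two polytopes associated with HBL data of strictly smaller ambient dimension, and then recurse.

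First, I would establish the equivalence that $t \in \PP\tup{V,\tup{V_j},\tup{\phi_j}}$ if and only if both $t \in \PP\tup{W,\tup{V_j},\tup{\restr{\phi_j}{W}}}$ and $t \in \PP\tup{V/W,\tup{V_j/\phi_j(W)},\tup{[\phi_j]}}$ hold. The ``$\Leftarrow$'' direction is immediate from Lemma~\ref{lemma:factorization1}, which gives that containment with no additional hypothesis on $W$ or $t$. For ``$\Rightarrow$'', I assume $t \in \PP\tup{V,\tup{V_j},\tup{\phi_j}}$; the input criticality of $W$ with respect to $t$ is exactly what Lemma~\ref{lemma:factorization} needs to conclude that the full polytope equals the intersection of the two smaller polytopes, in which case $t$ sits in the intersection. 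Thus the reduction to smaller ambient dimensions is valid.

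Next, since $\set{0}\subpr W\subpr V$ by hypothesis, we have $\Dim{W}<\Dim{V}$ and $\Dim{V/W}<\Dim{V}$. The smaller HBL data can be constructed explicitly: bases of $W$ and of a complement of $\phi_j(W)$ in $V_j$ are computed by Gaussian elimination over $\QQ$, and the maps $\restr{\phi_j}{W}$ and $[\phi_j]$ are then represented by rational matrices. To decide $t$'s membership in each smaller polytope, I would invoke Theorem~\ref{thm:decision} on each, obtaining a finite list of linear inequalities that jointly specify the polytope, against which $t$ can be tested by straightforward rational arithmetic. The algorithm outputs ``yes'' iff both sub-verdicts are ``yes''.

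The main obstacle I anticipate is avoiding circularity between Proposition~\ref{prop:subalg} and Theorem~\ref{thm:decision}, since the argument above invokes the latter. My plan is to present the two results as a simultaneous induction on $\Dim{V}$. In the inductive step at ambient dimension $d$, the algorithm for Theorem~\ref{thm:decision} will call Proposition~\ref{prop:subalg} at the same dimension $d$ (after using Lemma~\ref{lemma:enumeratesubspaces} to enumerate subspaces and identify a proper nontrivial subspace that is critical with respect to some candidate vertex of the polytope), while Proposition~\ref{prop:subalg} at dimension $d$ calls Theorem~\ref{thm:decision} only at strictly smaller ambient dimensions---so the recursion is well-founded. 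The base cases $\Dim{V} \le 1$ are trivial: no proper nontrivial subspace exists, making the hypothesis of Proposition~\ref{prop:subalg} vacuous, and for Theorem~\ref{thm:decision} the polytope collapses to $[0,1]^m$ (when $\Dim{V}=0$) or is cut out by the single inequality $1\le\sum_j s_j\Dim{\phi_j(V)}$ together with $s\in[0,1]^m$ (when $\Dim{V}=1$).
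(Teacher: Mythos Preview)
Your proposal is correct and follows essentially the same route as the paper: reduce via Lemma~\ref{lemma:factorization} (together with Lemma~\ref{lemma:factorization1}) to the two smaller HBL data on $W$ and $V/W$, invoke Theorem~\ref{thm:decision} at strictly smaller ambient dimension, and organize the whole thing as a simultaneous induction with Theorem~\ref{thm:decision}. The only minor difference is that the paper's induction scheme is a bit more elaborate than yours---it tracks both $\Dim{V}$ and the number $m$ of maps, because the proof of Theorem~\ref{thm:decision} at dimension $n$ also calls itself at the same dimension $n$ with fewer maps (when handling extreme points with a coordinate equal to $0$ or $1$, via Lemma~\ref{lemma:cleanup}); your sketch of the Theorem~\ref{thm:decision} side omits this, but it does not affect the argument for Proposition~\ref{prop:subalg} itself, which only ever descends in dimension.
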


Theorem~\ref{thm:decision} and Proposition~\ref{prop:subalg} will be proved inductively in tandem, according to the following induction scheme. 
The proof of Theorem~\ref{thm:decision} for HBL data in which $V$ has dimension $n$, will rely on Proposition~\ref{prop:subalg} for HBL data in which $V$ has dimension $n$.
The proof of Proposition~\ref{prop:subalg} for HBL data in which $V$ has dimension $n$ and there are $m$ subspaces $V_j$, will rely on Proposition~\ref{prop:subalg} for HBL data in which $V$ has dimension strictly less than $n$, on Theorem~\ref{thm:decision} for HBL data in which $V$ has dimension strictly less than $n$, and also on Theorem~\ref{thm:decision} for HBL data in which $V$ has dimension $n$ and the number of subspaces $V_j$ is strictly less than $m$.
Thus there is no circularity in the reasoning.

\begin{proof}[Proof of Proposition~\ref{prop:subalg}]
Let $\tup{V,\tup{V_j},\tup{\phi_j}}$ and $t,W$ be given. 
Following Notation~\ref{not:splittingspaces}, consider the two HBL data $\tup{W,\tup{\phi_j(W)},\tup{\restr{\phi_j}{W}}}$ and $\tup{V/W,\tup{V_j/\phi_j(W)},\tup{[\phi_j]}}$, where $[\phi_j] \from V/W\to V_j/\phi_j(W)$ are the quotient maps.
From a basis for $V$, bases for $V_j$, a basis for $W$, and corresponding matrix representations of $\phi_j$, it is possible to compute the dimensions of, and bases for, $V/W$ and  $V_j/\phi_j(W)$, via row operations on matrices.
According to Lemma~\ref{lemma:factorization}, $t\in\PP\tup{V,\tup{V_j},\tup{\phi_j}}$ if and only if $t\in\PP\tup{W,\tup{\phi_j(W)},\tup{\restr{\phi_j}{W}}}\cap\PP\tup{V/W,\tup{V_j/\phi_j(W)},\tup{[\phi_j]}}$.

Because $0 \subpr W \subpr V$, both $W,V/W$ have dimensions strictly less than the dimension of $V$. 
Therefore by Theorem~\ref{thm:decision} and the induction scheme, there exists an algorithm which computes both a finite list of inequalities characterizing $\PP\tup{W,\tup{\phi_j(W)},\tup{\restr{\phi_j}{W}}}$, and a finite list of inequalities characterizing $\PP\tup{V/W,\tup{V_j/\phi_j(W)},\tup{[\phi_j]}}$. 
Testing each of these inequalities on $t$ determines whether $t$ belongs to these two polytopes, hence whether $t$ belongs to $\PP\tup{V,\tup{V_j},\tup{\phi_j}}$. 
\end{proof}

\begin{lemma} \label{lemma:cleanup} 
Let HBL datum $\tup{V,\tup{V_j},\tup{\phi_j}}$ be given. 
Let $i\in\set{1,2,\ldots,m}$.
Let $s\in[0,1]^m$ and suppose that $s_i=1$.
Let $V'=\set{x\in V: \phi_i(x)=0}$ be the nullspace of $\phi_i$.
Define $\widehat{s}\in[0,1]^{m-1}$ to be $(s_1,\ldots,s_m)$ with the $i^\text{th}$ coordinate deleted.
Then $s\in\PP\tup{V,\tup{V_j},\tup{\phi_j}}$ if and only if $\widehat{s}\in\PP\tup{V',\tup{V_j}_{j\ne i},\tup{\restr{\phi_j}{V'}}_{j\ne i}}$.
\end{lemma}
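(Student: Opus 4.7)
The plan is to unfold the definition of $\PP$ as the set of $s$ satisfying the subcriticality inequalities $\Dim{W} \le \sum_{j=1}^m s_j \Dim{\phi_j(W)}$ indexed by subspaces $W \subim V$, and show that when $s_i = 1$ this family of inequalities is equivalent to the family $\Dim{W'} \le \sum_{j \ne i} s_j \Dim{\phi_j(W')}$ indexed by subspaces $W' \subim V'$, where $V' \ceq \Kernel{\phi_i}$. Once set up this way, both implications reduce to a short rank-nullity computation combined with monotonicity of $\phi_j$ on subspaces.

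For the forward direction ($s \in \PP(V,\tup{V_j},\tup{\phi_j}) \Rightarrow \widehat{s} \in \PP(V',\tup{V_j}_{j\ne i},\tup{\restr{\phi_j}{V'}}_{j\ne i})$), I will note that any $W' \subim V'$ is also a subspace of $V$, and $\phi_i(W') = \set{0}$ by the definition of $V'$. The subcriticality inequality for $s$ applied to $W'$ then drops its $j = i$ summand and becomes exactly the desired inequality for $\widehat{s}$ on $W'$. For the converse direction, given an arbitrary $W \subim V$, I will take $W' \ceq W \cap V' = \Kernel{\restr{\phi_i}{W}}$. Rank-nullity applied to $\restr{\phi_i}{W} \from W \to \phi_i(W)$ yields $\Dim{W} = \Dim{W'} + \Dim{\phi_i(W)}$. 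Combining the assumed subcriticality of $\widehat{s}$ on $W'$ with the monotonicity $\Dim{\phi_j(W')} \le \Dim{\phi_j(W)}$ for each $j \ne i$ gives $\Dim{W'} \le \sum_{j \ne i} s_j \Dim{\phi_j(W)}$. Adding $s_i \Dim{\phi_i(W)} = \Dim{\phi_i(W)}$ to both sides, where I crucially use $s_i = 1$, recovers $\Dim{W} \le \sum_{j=1}^m s_j \Dim{\phi_j(W)}$, i.e., the subcriticality of $s$ at $W$.

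There is essentially no technical obstacle here; the lemma is bookkeeping, and the hypothesis $s_i = 1$ is used exactly once, to compensate for the $\Dim{\phi_i(W)}$ contribution that is discarded in passing from $W$ to $W \cap V'$. The same argument goes through verbatim for Abelian group HBL data with $\Rank{\cdot}$ in place of $\Dim{\cdot}$, since rank is additive on short exact sequences of finitely generated Abelian groups, so the lemma applies uniformly in both settings considered in \sectn{\ref{sec:P}}.
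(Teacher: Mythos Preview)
Your proposal is correct and matches the paper's proof essentially line for line. The only cosmetic difference is that the paper phrases the converse via an explicit supplement $W''$ to $W\cap V'$ in $W$ (noting $\phi_i$ is injective on $W''$, so $\Dim{\phi_i(W'')}=\Dim{W''}$), whereas you invoke rank--nullity for $\restr{\phi_i}{W}$ directly; these are the same computation.
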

\begin{proof}
For any subspace $W \subim V'$, since $\Dim{\phi_i(W)}=0$,
\[ \sum_j s_j \Dim{\phi_j(W)} = \sum_{j\ne i} s_j \Dim{\phi_j(W)}. \]
So if $s\in\PP\tup{V,\tup{V_j},\tup{\phi_j}}$ then $\widehat{s}\in\PP\tup{V',\tup{V_j}_{j\ne i},\tup{\restr{\phi_j}{V'}}_{j\ne i}}$.

Conversely, suppose that $\widehat{s}\in\PP\tup{V',\tup{V_j}_{j\ne i},\tup{\restr{\phi_j}{V'}}_{j\ne i}}$.
Let $W$ be any subspace of $V$. 
Write $W=W''+(W\cap V')$ where the subspace $W'' \subim V$ is a supplement to $W \cap V'$ in $W$, so that $\Dim{W} = \Dim{W''} + \Dim{W\cap V'}$.
Then
\begin{align*}
\sum_j s_j \Dim{\phi_j(W)} &= \Dim{\phi_i(W)} + \sum_{j\ne i}s_j \Dim{\phi_j(W)}
\\ &\ge  \Dim{\phi_i(W'')} + \sum_{j\ne i}s_j \Dim{\phi_j(W\cap V')}
\\ &\ge  \Dim{W''} + \Dim{W\cap V'};
\end{align*}
$\Dim{\phi_i(W'')} = \Dim{W''}$ because $\phi_i$ is injective on $W''$.
So $s\in\PP\tup{V,\tup{V_j},\tup{\phi_j}}$. 
\end{proof}

To prepare for the proof of Theorem~\ref{thm:decision}, let $\PP\tup{V,\tup{V_j},\tup{\phi_j}}$ be given.
Let $\tup{W_1,W_2,W_3,\ldots}$ be the list of subspaces of $V$ produced by the algorithm of Lemma~\ref{lemma:enumeratesubspaces}. 
Let $N\ge 1$.
To each index $\alpha \in\set{1,2,\ldots,N}$ is associated a linear inequality $\sum_{j=1}^m s_j\Dim{\phi_j(W_\alpha)} \ge \Dim{W_\alpha}$ for elements $s\in[0,1]^m$, which we encode by an $(m+1)$--tuple $\tup{v(W_\alpha),c(W_\alpha)}$; the inequality is $\langle s,v(W_\alpha)\rangle\ge c(W_\alpha)$.
Define $\PP_N\subseteq[0,1]^m$ to be the polytope defined by this set of inequalities.

\begin{lemma} \label{lemma:musthalt}
\begin{equation} \label{tautologicalinclusion}
\PP_N \supseteq \PP\dtup{V,\dtup{V_j},\dtup{\phi_j}} \qquad \text{for all $N$}. 
\end{equation}
Moreover, there exists a positive integer $N$ such that $\PP_M=\PP\tup{V,\tup{V_j},\tup{\phi_j}}$ for all $M\ge N$.
\end{lemma}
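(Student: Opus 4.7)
The inclusion \eqref{tautologicalinclusion} is essentially tautological. The polytope $\PP\tup{V,\tup{V_j},\tup{\phi_j}}$ is defined as the set of $s\in[0,1]^m$ satisfying $\sum_{j=1}^m s_j\Dim{\phi_j(W)} \ge \Dim{W}$ for \emph{every} subspace $W \subim V$, whereas $\PP_N$ is defined by the subset of these inequalities indexed by the first $N$ enumerated subspaces $W_1,\ldots,W_N$. Any element of $\PP$ therefore satisfies each of the finitely many defining inequalities of $\PP_N$, so $\PP \subseteq \PP_N$.

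For the stabilization claim, the key observation (already noted in \sectn{\ref{sec:P}}) is that although there are infinitely many subspaces $W \subim V$, each inequality $\langle s, v(W)\rangle \ge c(W)$ is completely determined by the tuple $\tup{\Dim{W},\Dim{\phi_1(W)},\ldots,\Dim{\phi_m(W)}}$, which takes values in the finite set $\set{0,1,\ldots,\Dim{V}}\times\prod_{j=1}^m\set{0,1,\ldots,\Dim{\phi_j(V)}}$. Hence the collection of distinct inequalities generated by varying $W$ over all subspaces of $V$ is finite; enumerate the realizable tuples as $\tau_1,\ldots,\tau_K$. For each $k$, pick any subspace $U_k \subim V$ realizing $\tau_k$ and let $\alpha_k$ be its position in the enumeration $\tup{W_1,W_2,\ldots}$ produced by Lemma~\ref{lemma:enumeratesubspaces}.

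Now set $N \ceq \max_{k} \alpha_k$. For any $M \ge N$, the list $\tup{W_1,\ldots,W_M}$ contains each $U_k$, so the defining constraints of $\PP_M$ include every distinct inequality that arises from some subspace of $V$; any further $W_\alpha$ with $\alpha > N$ contributes only a repetition of one of these constraints. Consequently any $s\in\PP_M$ satisfies $\sum_{j=1}^m s_j\Dim{\phi_j(W)} \ge \Dim{W}$ for every subspace $W$, i.e., $\PP_M \subseteq \PP\tup{V,\tup{V_j},\tup{\phi_j}}$. Combined with \eqref{tautologicalinclusion}, this yields $\PP_M = \PP\tup{V,\tup{V_j},\tup{\phi_j}}$ for all $M\ge N$.

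The only conceptual input is the finiteness of the set of distinct inequalities; the rest is bookkeeping against the enumeration of Lemma~\ref{lemma:enumeratesubspaces}. There is no genuine obstacle here, though I note that the argument is non-constructive in the sense that it does not by itself indicate how large $N$ must be taken (detecting stabilization is the business of the enclosing algorithm in Theorem~\ref{thm:decision}, which will use Proposition~\ref{prop:subalg} to verify at each stage whether the current $\PP_M$ already equals $\PP$).
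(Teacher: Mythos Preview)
Your proof is correct and follows essentially the same approach as the paper: both argue that the inclusion is immediate because $\PP_N$ is defined by a subset of the constraints defining $\PP$, and that stabilization follows from the finiteness of the set of distinct inequalities together with the exhaustiveness of the enumeration $\tup{W_\alpha}$. Your version is in fact slightly more explicit than the paper's in spelling out why only finitely many distinct inequalities arise.
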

\begin{proof}
The inclusion holds for every $N$, because the set of inequalities defining $\PP_N$ is a subset of the set defining $\PP\tup{V,\tup{V_j},\tup{\phi_j}}$. 

$\PP\tup{V,\tup{V_j},\tup{\phi_j}}$ is specified by some finite set of inequalities, each specified by some subspace of $V$.
Choose one such subspace for each of these inequalities.
Since $\tup{W_\alpha}$ is a list of all subspaces of $V$, there exists $M$ such that each of these chosen subspaces belongs to $\tup{W_\alpha: \alpha\le M}$. 
\end{proof}

\begin{lemma}
Let $m\ge 2$.
If $s$ is an extreme point of $\PP_N$, then either $s_j\in\set{0,1}$ for some $j\in\set{1,2,\ldots,m}$, or there exists $\alpha\in\set{1,2,\ldots,N}$ for which $W_\alpha$ is critical with respect to $s$ and $0<\Dim{W_\alpha}<\Dim{V}$.
\end{lemma}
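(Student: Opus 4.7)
The plan is to mirror the structure of Proposition~\ref{prop:0or1} but to exploit the cleaner extreme-point characterization available once the infinite family of inequalities defining $\PP$ has been truncated to the finite family indexed by $\alpha \in \set{1,\ldots,N}$. Specifically, since $\PP_N$ is the finite intersection of halfspaces in $\RR^m$, any extreme point $s$ is characterized by the fact that the set of constraints active at $s$ contains $m$ linearly independent normals, i.e., has rank $m$ in $\RR^m$. I will argue by contradiction: assume $s_j \in (0,1)$ for every $j$ and that no $W_\alpha$ with $0 < \Dim{W_\alpha} < \Dim{V}$ is critical with respect to $s$, then exhibit that the active constraints at $s$ cannot possibly attain rank $m \ge 2$.

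To execute this, I would enumerate the four types of constraints that cut out $\PP_N$ and determine which can be active at $s$. The $2m$ box constraints $s_j \ge 0$ and $-s_j \ge -1$ are inactive by the assumption $s_j \in (0,1)$. The constraints $\langle s, v(W_\alpha)\rangle \ge c(W_\alpha)$ with $0 < \Dim{W_\alpha} < \Dim{V}$ are inactive because equality there is precisely the criticality condition, which is ruled out by hypothesis. The only remaining possibilities come from $W_\alpha \in \set{\set{0}, V}$. The case $W_\alpha = \set{0}$ yields the trivial inequality $0 \ge 0$, whose normal vector $v(\set{0})$ is the zero vector and so contributes nothing to the rank of the active set. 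The case $W_\alpha = V$ yields constraints with normal $v(V) = \tup{\Dim{\phi_1(V)},\ldots,\Dim{\phi_m(V)}}$; no matter how many $W_\alpha$ in the list coincide with $V$, they all have the same normal vector and so contribute rank at most $1$.

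Combining these observations, the set of active-constraint normals at $s$ has rank at most $1$, which contradicts the requirement of rank $m \ge 2$ for $s$ to be an extreme point of $\PP_N \subseteq \RR^m$. Hence at least one of the two alternatives in the conclusion must hold. The main obstacle is simply to record carefully the standard linear-programming fact that extreme points of a polytope in $\RR^m$ are precisely those feasible points whose active constraints span $\RR^m$; everything else is bookkeeping on the four classes of constraints above, and no analogue of Lemmas~\ref{lemma:1} and \ref{lemma:3} (which did the heavy lifting in Proposition~\ref{prop:0or1}) is needed, because the present conclusion requires only a single coordinate of $s$ to lie in $\set{0,1}$ rather than all of them.
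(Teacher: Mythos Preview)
Your proposal is correct and follows essentially the same approach as the paper's proof: both argue that an extreme point of $\PP_N$ requires $m$ active constraints with linearly independent normals, rule out the box constraints under the assumption $s_j\in(0,1)$, discard $W_\alpha=\set{0}$ as contributing a zero normal, and observe that all $W_\alpha=V$ contribute the same single normal, forcing (since $m\ge 2$) some proper nonzero $W_\alpha$ to be critical. Your phrasing in terms of the rank of the active-normal set is arguably cleaner than the paper's talk of ``genuinely distinct inequalities,'' but the idea is identical.
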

In the following argument, we say that two inequalities $\langle s,v(W_\alpha)\rangle\ge c(W_\alpha)$, $\langle s,v(W_\beta)\rangle\ge c(W_\beta)$ are distinct if they specify different subsets of $\RR^m$.
\begin{proof}
For any extreme point $s$, equality must hold in at least $m$ genuinely distinct inequalities among those defining $\PP_N$.
These inequalities are of three kinds: $\langle s,v(W_\alpha)\rangle\ge c(W_\alpha)$ for $\alpha\in\set{1,2,\ldots,N}$, $s_j\ge 0$, and $-s_j\ge -1$, with $j\in\set{1,2,\ldots,m}$.
If $W_\beta=\set{0}$ then $W_\beta$ specifies the tautologous inequality $\sum_j s_j\cdot 0=0$, so that index $\beta$ can be disregarded.

If none of the coordinates $s_j$ are equal to $0$ or $1$, there must exist $\beta$ such that equality holds in at least two distinct inequalities $\langle s,v(W_\beta)\rangle \ge  c(W_\beta)$ associated to subspaces $W_\alpha$ among those which are used to define $\PP_N$.
We have already discarded the subspace $\set{0}$, so there must exist $\beta$ such that $W_\beta$ and $V$ specify distinct inequalities.
Thus $0<\Dim{W_\beta}<\Dim{V}$.
\end{proof}

\begin{proof}[Proof of Theorem~\ref{thm:decision}]
Set $\PP=\PP\tup{V,\tup{V_j},\tup{\phi_j}}$.
Consider first the base case $m=1$. 
The datum is a pair of finite-dimensional vector spaces $V,V_1$ with 
a $\QQ$--linear map $\phi\from V\to V_1$.
The polytope $\PP$ is the set of all $s\in[0,1]$ for which $s\Dim{\phi(W)}\ge \Dim{W}$ for every subspace $W\subim V$. 
If $\Dim{V}=0$ then $\PP\tup{V,\tup{V_j},\tup{\phi_j}} = [0,1]$.
If $\Dim{V}>0$ then since $\Dim{\phi(W)} \le \Dim{W}$ for every subspace, the inequality can only hold if the nullspace of $\phi$ has dimension $0$, and then only for $s=1$.
The nullspace of $\phi$ can be computed.
So  $\PP$ can be computed when $m=1$.

Suppose that $m\ge 2$.
Let $\tup{V,\tup{V_j},\tup{\phi_j}}$ be given.
Let $N=0$. 
Recursively apply the following procedure.

Replace $N$ by $N+1$. 
Consider $\PP_N$. 
Apply Lemma~\ref{lemma:enumeratevertices} to obtain a list of all extreme points $\tau$ of $\PP_N$, and for each such $\tau$ which belongs to $(0,1)^m$, a nonzero proper subspace $W(\tau)\subim V$ which is critical with respect to $\tau$. 

Examine each of these extreme points $\tau$, to determine whether $\tau\in\PP\tup{V,\tup{V_j},\tup{\phi_j}}$.
There are three cases. 
Firstly, if $\tau\in (0,1)^m$, then Proposition~\ref{prop:subalg} may be invoked, using the critical subspace $W(\tau)$, to determine whether $\tau\in \PP$.

Secondly, if some component $\tau_i$ of $\tau$ equals $1$, let $V'$ be the nullspace of $\phi_i$.
Set 
\[
\PP'=\PP\dtup{V',\dtup{V_j}_{j\ne i},\dtup{\restr{\phi_j}{V'}}_{j\ne i}}.
\]
According to Lemma~\ref{lemma:cleanup}, $\tau\in\PP$ if and only if $\widehat{\tau} = \tup{\tau_j}_{j\ne i}\in\PP'$. 
This polytope $\PP'$ can be computed by the induction hypothesis, since the number of indices $j$ has been reduced by one.

Finally, if some component $\tau_i$ of $\tau$ equals $0$, then because the term $s_i\Dim{\phi_i(W)}=0$ contributes nothing to sums $\sum_{j=1}^m s_j \Dim{\phi_j(W)}$, $\tau\in \PP$ if and only if $\widehat{\tau}$ belongs to $\PP\tup{V,\tup{V_j}_{j\ne i},\tup{\phi_j}_{j\ne i}}$.
To determine whether $\widehat{\tau}$ belongs to this polytope requires again only an application of the induction hypothesis.

If every extreme point $\tau$ of $\PP_N$ belongs to $\PP$, then because $\PP_N$ is the convex hull of its extreme points, $\PP_N\subseteq \PP$.
The converse inclusion holds for every $N$, so in this case $\PP_N= \PP$.
The algorithm halts, and returns the conclusion that $\PP =\PP_N$, along with information already computed: a list of the inequalities specified by all the subspaces  $W_1,\ldots,W_N$, and a list of extreme points of $\PP_N = \PP$.

On the other hand, if at least one extreme point of $\PP_N$ fails to belong to $\PP$, then $\PP_N\ne\PP$.
Then increment $N$ by one, and repeat the above steps. 

Lemma~\ref{lemma:musthalt} guarantees that this procedure will halt after finitely many steps. 
\end{proof}


\subsection{On Computation of the Constraints Defining $\PP$}
\label{sec5.1}

In order to compute the set of inequalities \eqref{subcriticalhypothesis}, we would like to answer the following question:
Given any group homomorphisms $\phi_1, \ldots, \phi_m$ and integers $0 \leq r,r_1,\ldots, r_m \leq d$, does there exist a subgroup $H \subim \ZZ^d$ such that
\begin{equation*}
\Rank{H} = r, \quad \Rank{\phi_1(H)} = r_1,\quad \ldots, \quad \Rank{\phi_m(H)} = r_m
\end{equation*}
or, in other words, is
\begin{equation*}
r \le \sum_{j=1}^m s_j \cdot r_j
\end{equation*}
one of the inequalities?
Based on the following result, it is not known whether this problem is decidable for general $\tup{\ZZ^d,\tup{\ZZ^{d_j}},\tup{\phi_j}}$. 
\begin{theorem} \label{thm4.1}
There exists an effective algorithm for computing the set of constraints \eqref{subcriticalhypothesis} defining $\PP$ if and only if there exists an effective algorithm to decide whether a system of polynomial equations with rational coefficients has a rational solution. 
\end{theorem}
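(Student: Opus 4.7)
The plan is to prove the two implications of the equivalence separately. The reverse direction --- from decidability of Hilbert's Tenth Problem over $\QQ$ to an effective algorithm for the constraints \eqref{subcriticalhypothesis} --- is a routine reduction; the forward direction, which encodes polynomial systems as rank conditions inside an HBL datum, is the main obstacle.

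For the reverse direction, I would observe that the constraints \eqref{subcriticalhypothesis} are indexed by the finitely many tuples $\tup{r,r_1,\ldots,r_m}$ with $0\le r \le d$ and $0\le r_j\le d_j$ that actually arise as $\tup{\Rank{H},\Rank{\phi_1(H)},\ldots,\Rank{\phi_m(H)}}$ for some subgroup $H\subim\ZZ^d$. By Lemma~\ref{lem:PZequalsPQ}, realization over $\ZZ^d$ coincides with realization by a subspace $W\subim\QQ^d$, which I would parameterize as the column span of an unknown $d\times r$ matrix $X$ over $\QQ$ (with $\Phi_j$ denoting the matrix of $\phi_j$). The realization question becomes: is there $X$ with $\Rank{X}=r$ and $\Rank{\Phi_j X}=r_j$ for all $j$? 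Each equality $\Rank{M}=k$ splits into $\Rank{M}\le k$ (vanishing of every $(k+1)$-minor, a polynomial equation) and $\Rank{M}\ge k$ (some $k$-minor is nonzero). The disequation converts to an equation by introducing an auxiliary unknown $y$ with $y\cdot\sum_\mu \mu(M)^2=1$; over $\QQ$ this forces some $\mu(M)$ to be nonzero, since a sum of rational squares vanishes only trivially. Applying the assumed algorithm to the resulting polynomial system decides realization of each candidate tuple, and enumerating over all candidates yields the constraint set.

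For the forward direction, given a polynomial system $f_1=\cdots=f_k=0$ over $\QQ$, the plan is first to consolidate it into a single equation $F\ceq\sum_i f_i^2=0$ (equivalent over $\QQ$), then to flatten $F=0$ into a \emph{linear} relation among monomials by introducing auxiliary unknowns $y_\alpha$, one per monomial $x^\alpha$ appearing in $F$, subject to bilinear product identities $y_\alpha y_\beta = y_{\alpha+\beta}$. Each bilinear identity is the vanishing of a $2\times 2$ determinant, hence a rank-drop on a specific $2\times 2$ matrix with entries among the $y$'s. I would then design an HBL datum $\dtup{\ZZ^D,\dtup{\ZZ^{d_j}},\dtup{\phi_j}}$ in which a rank-two subgroup $H$ encodes a candidate assignment of the $y_\alpha$ via its pair of generators (one playing the role of the affine slice $y_0=1$), and each homomorphism $\phi_j$ is crafted so that the prescribed rank of $\phi_j(H)$ corresponds to one of the bilinear identities or to the final linear relation encoding $F=0$. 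By construction, a designated target tuple $\tup{r,r_1,\ldots,r_m}$ appears among the constraints \eqref{subcriticalhypothesis} iff all identities hold simultaneously over $\QQ$, iff the original system has a rational solution; the assumed algorithm for \eqref{subcriticalhypothesis} then decides $\QQ$-solvability.

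The hard step will be the explicit construction of $\tup{\phi_j}$ in the forward direction, specifically guaranteeing that the designated rank tuple is realized \emph{only} by subgroups corresponding to genuine rational solutions, with no spurious realizations. Two principal obstacles stand out: bilinear relations like $y_\alpha y_\beta=y_\gamma$ are projectively homogeneous, so an absolute normalization such as $y_0=1$ must be enforced indirectly by additional homomorphisms whose rank drops break the scaling ambiguity; and one must rule out witnesses lying in proper algebraic extensions of $\QQ$, which is exactly where the $\QQ$-specific identity ``sum of squares vanishes only trivially'' reappears, ensuring that all coordinates of a realizing subgroup's generators can be taken rational. With these two reductions in place, the stated equivalence follows.
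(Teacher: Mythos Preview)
Your approach is essentially the same as the paper's. The reverse direction is identical: parameterize a putative witness subspace by a matrix of unknowns, express the rank equalities via vanishing of minors together with a nonvanishing condition handled by the auxiliary-variable trick, and invoke the assumed $\QQ$-solver. For the forward direction the paper likewise flattens the system to affine relations plus bilinear ``monomial'' relations $u_\alpha u_\beta = u_\gamma$ (its Lemma~\ref{lem:basicset}), then builds an HBL datum in which a rank-two witness encodes a candidate assignment and each bilinear relation becomes a rank-one condition on a suitable $2\times 2$ image, with the affine relations encoded as rank-zero conditions. Your preliminary step of collapsing to a single equation $\sum_i f_i^2=0$ is unnecessary---the paper simply carries the list of affine and bilinear relations through---but harmless.

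One point in your write-up is misdirected: the second ``obstacle'' you flag, ruling out witnesses in proper algebraic extensions of $\QQ$, is a non-issue and no sum-of-squares argument is needed there. The constraints \eqref{subcriticalhypothesis} quantify over subgroups $H\le\ZZ^d$ (equivalently, by Lemma~\ref{lem:PZequalsPQ}, subspaces of $\QQ^d$), so any witnessing $H$ already has integer generators, and the solution you decode from them is automatically rational. The genuine work in the converse---showing that \emph{every} rank-two $H$ meeting the prescribed rank pattern actually decodes to a solution---lies entirely in your first obstacle: forcing $H$ to have the intended ``two parallel copies of the solution vector'' shape. The paper achieves this with several auxiliary homomorphisms (projections onto designated coordinates with rank-one requirements, and difference maps forcing the two copies to agree), after which a short row-reduction argument recovers the normalized generators. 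You should expect to need analogous gadgets; once they are in place, rationality of the decoded solution is immediate.
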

\noindent The conclusion of Theorem~\ref{thm4.1} is equivalent to a positive answer to Hilbert's Tenth Problem for the rational numbers $\QQ$ (see Definition~\ref{def:HTPQ}), a longstanding open problem.

Let us fix some notation.
\begin{notation}[see, e.g., \cite{lang2002algebra}]
For a natural number $d$ and ring $R$, we write $\Mat{d}{R}$ to denote the ring of $d$--by--$d$ matrices with entries from $R$.
(Note that elsewhere in this work we also use the notation $R^{m\times n}$ to denote the \emph{set} of $m$--by--$n$ matrices with entries from $R$.)
 We identify $\Mat{d}{R}$ with the endomorphism ring of the $R$--module $R^d$ and thus may write elements of $\Mat{d}{R}$ as $R$--linear maps rather than as matrices. 
 Via the usual coordinates, we may identify $\Mat{d}{R}$ with $R^{d^2}$. 
 We write $R[x_1,\ldots,x_q]$ to denote the ring of polynomials over $R$ in variables $x_1,\ldots,x_q$.

Recall we are given $d,d_j\in\NN$ and $\ZZ$--linear maps $\phi_j\from\ZZ^d\to\ZZ^{d_j}$, for $j\in\set{1,2,\ldots,m}$ for some positive integer $m$.
Without loss, we may assume each $d_j=d$, so each $\phi_j$ is an endomorphism of $\ZZ^d$. 
%
%
%
Each $\phi_j$ can also be interpreted as a $\QQ$--linear map (from $\QQ^d$ to $\QQ^{d_j}$), represented by the same integer matrix.
\end{notation}

\begin{definition} 
\label{defZtoQ}
Given $m,d \in \NN$, and a finite sequence $r,r_1,\ldots,r_m$ of natural numbers each bounded by $d$, we define the sets
\begin{align*}
E_{d;r,r_1,\ldots,r_m} &\ceq \dset{(\phi_1,\ldots,\phi_m) \in (\Mat{d}{\ZZ})^m : (\exists H \subim \ZZ^d)\; \Rank{H} = r \text{ and } \Rank {\phi_j(H)} = r_j, 1 \le j \le m }, \\
E_{d;r,r_1,\ldots,r_m}^\QQ &\ceq \dset{(\phi_1,\ldots,\phi_m) \in (\Mat{d}{\QQ})^m : (\exists V \subim \QQ^d)\; \Dim{V} = r \text{ and } \Dim{\phi_j(V)} = r_j, 1 \le j \le m }.
\end{align*}
%
%
\end{definition}

\begin{remark} \label{rmk:minorpolynomials}
The question of whether a given $m$--tuple $(\phi_1,\ldots,\phi_m) \in (\Mat{d}{R})^m$ is a member of $E_{d;r,r_1,\ldots,r_m}$ (when $R=\ZZ$) or $E_{d;r,r_1,\ldots,r_m}^\QQ$ (when $R=\QQ$) is an instance of the problem of whether some system of polynomial equations has a solution over the ring $R$.
We let $B$ be a $d$--by--$r$ matrix of variables, and construct a system of polynomial equations in the $dr$ unknown entries of $B$ and $md^2$ known entries of $\phi_1,\ldots,\phi_m$ that has a solution if and only if the aforementioned rank (or dimension) conditions are met.
The condition $\Rank{M} = s$ for a matrix $M$ is equivalent to all $(s+1)$--by--$(s+1)$ minors of $M$ equaling zero (i.e., the sum of their squares equaling zero), and at least one $s$--by--$s$ minor being nonzero (i.e., the sum of their squares not equaling zero --- see Remark~\ref{rmk:5.1}). 
We construct two polynomial equations in this manner for $M=B$ (with $s=r$) and for each matrix $M=\phi_j B$ (with $s=r_j$).
\end{remark}

\begin{lemma} 
\label{rationalgroup}
With the notation as in Definition~\ref{defZtoQ}, $E_{d;r,r_1,\ldots,r_m} = E_{d;r,r_1,\ldots,r_m}^\QQ \cap (\Mat{d}{\ZZ})^m$.
\end{lemma}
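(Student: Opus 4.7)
The plan is to prove the two inclusions separately, exploiting the standard correspondence between finitely generated subgroups of $\ZZ^d$ and finite-dimensional $\QQ$-subspaces of $\QQ^d$ via tensoring with $\QQ$ (equivalently, taking $\QQ$-linear span). This correspondence was already exploited in the proof of Lemma~\ref{lem:PZequalsPQ}, and the key fact it supplies is that for any subgroup $H \subim \ZZ^d$, $\Rank{H}$ equals $\Dim{H\otimes\QQ}$, where $H\otimes\QQ$ may be identified with the $\QQ$-span of $H$ inside $\QQ^d$.

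For the forward inclusion ($\subseteq$): suppose $(\phi_1,\ldots,\phi_m) \in E_{d;r,r_1,\ldots,r_m}$ with witnessing subgroup $H \subim \ZZ^d$. Let $V$ be the $\QQ$-span of $H$ in $\QQ^d$. Then $\Dim{V} = \Rank{H} = r$. Since $\phi_j$ is $\QQ$-linear and $V$ is the $\QQ$-span of $H$, $\phi_j(V)$ is the $\QQ$-span of $\phi_j(H)$; hence $\Dim{\phi_j(V)} = \Rank{\phi_j(H)} = r_j$. This shows $(\phi_1,\ldots,\phi_m)\in E^\QQ_{d;r,r_1,\ldots,r_m}$, and by hypothesis the tuple lies in $(\Mat{d}{\ZZ})^m$.

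For the reverse inclusion ($\supseteq$): suppose $(\phi_1,\ldots,\phi_m) \in E^\QQ_{d;r,r_1,\ldots,r_m} \cap (\Mat{d}{\ZZ})^m$, so the matrices are integral and there is a subspace $V \subim \QQ^d$ with $\Dim{V}=r$ and $\Dim{\phi_j(V)}=r_j$. Choose a basis $\set{v_1,\ldots,v_r}$ of $V$; after multiplying each $v_i$ by a common denominator (which does not change the $\QQ$-span or any dimension), we may assume $v_i \in \ZZ^d$. Let $H \subim \ZZ^d$ be the subgroup generated by $\set{v_1,\ldots,v_r}$. The $v_i$ are $\QQ$-linearly independent, hence $\ZZ$-linearly independent, so $\Rank{H} = r$; moreover the $\QQ$-span of $H$ is exactly $V$. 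For each $j$, $\phi_j(H)$ is the subgroup generated by $\phi_j(v_1),\ldots,\phi_j(v_r)$, and its $\QQ$-span equals $\phi_j(V)$, so $\Rank{\phi_j(H)} = \Dim{\phi_j(V)} = r_j$. Thus $H$ witnesses membership in $E_{d;r,r_1,\ldots,r_m}$.

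There is no serious obstacle; the only subtlety is the routine denominator-clearing step, which works because scaling a basis by a nonzero rational preserves its $\QQ$-span (and hence preserves all image dimensions). Since both inclusions are now established, the lemma follows.
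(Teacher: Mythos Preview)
Your proof is correct and follows essentially the same approach as the paper: both directions use the standard correspondence between subgroups of $\ZZ^d$ and $\QQ$-subspaces of $\QQ^d$, and the paper explicitly notes that this result was already established in Lemma~\ref{lem:PZequalsPQ}. The only cosmetic difference is that for the reverse inclusion the paper takes $H = V \cap \ZZ^d$, whereas you clear denominators in a basis; both constructions yield a subgroup whose $\QQ$-span is $V$, so the arguments are interchangeable.
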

\begin{proof}
This result was already established in Lemma~\ref{lem:PZequalsPQ}; we restate it here using the present notation.
For the left-to-right inclusion, observe that if $H \subim \ZZ^d$ witnesses that $(\phi_1,\ldots,\phi_m) \in E_{d;s,r_1,\ldots,r_m}$, then $H_\QQ$ witnesses that $(\phi_1,\ldots,\phi_m) \in E_{d;r,r_1,\ldots,r_m}^\QQ$.
For the other inclusion, if  $(\phi_1,\ldots,\phi_m) \in E_{d;r,r_1,\ldots,r_m}^\QQ \cap (\Mat{d}{\ZZ})^m$ witnessed by $V \subim \QQ^d$, then we may find a subgroup $H = V\cap\ZZ^d$ of $\ZZ^d$, with $\Rank{H}=\Dim{V}$.
Then $\Dim{\phi_j(V)}  = \Rank{\phi_j(H)} = r_j$ showing that $(\phi_1,\ldots,\phi_m) \in E_{d;r,r_1,\ldots,r_m}$. 
\end{proof}

\begin{definition} \label{def:HTPQ}
\emph{Hilbert's Tenth Problem for $\QQ$} is the question of whether there is an algorithm which given a finite set of polynomials $f_1(x_1,\ldots,x_q),\ldots, f_p(x_1,\ldots,x_q) \in \QQ[x_1,\ldots,x_q]$ (correctly) determines whether or not there is some $a \in \QQ^q$ for which $f_1(a) = \cdots = f_p(a) = 0$.
\end{definition}

\begin{remark} \label{rmk:5.1}
One may modify the presentation of Hilbert's Tenth Problem for $\QQ$ in various ways without affecting its truth value.  
For example, one may allow a condition of the form $g(a) \neq 0$ as this is equivalent to $(\exists b) (g(a)b - 1 = 0)$.  
On the other hand, using the fact that $x^2 + y^2 = 0 \Longleftrightarrow x = 0 = y$, one may replace the finite sequence of polynomial equations with a single equality (see also Remark~\ref{rmk:4.3}).
\end{remark}

\begin{remark}
Hilbert's Tenth Problem, proper, asks for an algorithm to determine solvability in integers of finite systems of equations over $\ZZ$.
 From such an algorithm one could positively resolve Hilbert's Tenth Problem for $\QQ$.  
 However, by the celebrated theorem of Matiyasevich-Davis-Putnam-Robinson~\cite{Matiyasevich93}, no such algorithm exists.   
 The problem for the rationals remains open.  
 The most natural approach would be to reduce from the problem over $\QQ$ to the problem over $\ZZ$, say, by showing that $\ZZ$ may be defined by an expression of the
form
$$
a \in \ZZ \Longleftrightarrow (\exists y_1) \cdots (\exists y_q) P(a;y_1,\ldots,y_q) = 0
$$
 for some fixed polynomial $P$.
 K\"{o}nigsmann~\cite{Kon} has shown that there is in fact a \emph{universal} definition of $\ZZ$ in $\QQ$, that is, a formula of the form
$$
a \in \ZZ \Longleftrightarrow (\forall y_1) \cdots (\forall y_q) \theta(a;y_1,\ldots,y_q) = 0
$$
 where $\theta$ is a finite Boolean combination of polynomial equations, but he also demonstrated that the existence of an existential
definition of $\ZZ$ would violate the Bombieri-Lang conjecture. 
K\"{o}nigsmann's result shows that it is unlikely that Hilbert's Tenth Problem for $\QQ$ can be resolved by reducing to the problem over $\ZZ$ using an existential definition of $\ZZ$ in $\QQ$. 
However, it is conceivable that this problem could be resolved without such a definition.
\end{remark}

\begin{proof}[Proof of Theorem~\ref{thm4.1} (necessity)]
Evidently, if Hilbert's Tenth Problem for $\QQ$ has a positive solution, then there is an algorithm to (correctly) determine for $d \in \NN$, $r, r_1, \ldots, r_m \leq d$ also in $\NN$, and $(\phi_1,\ldots,\phi_m) \in (\Mat{d}{\ZZ})^m$ whether $(\phi_1, \ldots, \phi_m) \in  E_{d;r,r_1,\ldots,r_m}$.
%
By Lemma~\ref{rationalgroup}, $(\phi_1,\ldots,\phi_m) \in E_{d;r,r_1,\ldots,r_m}$ just in case $(\phi_1,\ldots,\phi_m) \in E_{d;r,r_1,\ldots,r_m}^\QQ$.  
This last condition leads to an instance of Hilbert's Tenth Problem (for $\QQ$) for the set of rational polynomial equations given in Remark~\ref{rmk:minorpolynomials}.
\end{proof}

\begin{notation}
Given a set $S \subseteq \QQ[x_1,\ldots,x_q]$ of polynomials, we denote the set of rational solutions to the equations $f = 0$ as $f$ ranges through $S$ by
$$
V(S)(\QQ) \ceq \dset{ a \in \QQ^q : (\forall f \in S) f(a) = 0 }.
$$
\end{notation}
\begin{remark} \label{rmk:4.3}
Since $V(S)(\QQ)$ is an algebraic set over a field, Hilbert's basis theorem shows that we may always take $S$ to be finite. 
Then by replacing $S$ with $S' \ceq \set{ \sum_{f \in S} f^2 }$, one sees that $S$ may be assumed to consist of a single polynomial.  
While the reduction to finite $S$ is relevant to our argument, the reduction to a single equation is not.
\end{remark}

\begin{definition}
For any natural number $t$, we say that the set $D \subseteq \QQ^t$ is \emph{Diophantine} if there is some $q-t \in \NN$ and a set 
$S \subseteq \QQ[x_1,\ldots,x_t;y_1,\ldots,y_{q-t}]$ for which 
$$
D = \dset{ a \in \QQ^t : (\exists b \in \QQ^{q-t})  (a;b) \in V(S)(\QQ) }.
$$
\end{definition}
\noindent We will show sufficiency in Theorem~\ref{thm4.1} by establishing a stronger result: namely, that an algorithm to decide membership in sets of the form $E_{d;r,r_1,\ldots,r_m}$ could also be used to decide membership in any Diophantine set. 
(Hilbert's Tenth Problem for $\QQ$ concerns membership in the specific Diophantine set $V(S)(\QQ)$.)

With the next lemma, we use a standard trick of replacing composite terms with single applications of the basic operations to put a general Diophantine set in a standard form (see, e.g., \cite{Vak}).
\begin{lemma}\label{lem:basicset}
Given any finite set of polynomials $S \subset \QQ[x_1,\ldots,x_q]$, let $d \ceq \max_{f\in S} \max_{i=1}^q \deg_{x_i}(f)$ and $\mathcal{D} \ceq \set{0,1,\ldots,d}^q$.
There is another set of variables $\set{u_\alpha}_{\alpha \in \mathcal{D}}$ and another finite set $S' \subset \QQ [ \set{u_\alpha}_{\alpha \in \mathcal{D}} ]$ consisting entirely of affine polynomials (polynomials of the form $c + \sum c_\alpha u_\alpha$ where not all $c$ and $c_\alpha$ are zero) and polynomials of the form $u_\alpha u_\beta - u_\gamma$ with $\alpha$, $\beta$, and $\gamma$ distinct, so that $V(S)(\QQ) = \pi (V(S')(\QQ))$ where $\pi\from\QQ^{\mathcal{D}} \to \QQ^q$ is given by 
$$
\dtup{u_\alpha}_{\alpha \in \mathcal{D}} \mapsto \dtup{u_{(1,0,\ldots,0)},u_{(0,1,0,\ldots,0)}, \ldots, u_{(0,\ldots,0,1)}}.
$$ 
\end{lemma}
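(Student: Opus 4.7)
The plan is to replace each monomial $x^\alpha$ ($\alpha \in \mathcal{D}$) by a fresh variable $u_\alpha$, so that every polynomial of $S$ becomes an affine expression in the $u$'s and every monomial identity $u_\alpha = x^\alpha$ is forced by a single bilinear equation coming from an additive decomposition of $\alpha$. This is the standard ``linearization'' reduction from polynomial feasibility to a system of affine equations combined with simple bilinear ones.

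Concretely, I construct $S'$ in three batches. First, adjoin the affine constraint $u_{(0,\ldots,0)} - 1 = 0$, fixing the constant monomial. Second, for each $\alpha \in \mathcal{D}$ with $|\alpha| \ge 2$, pick any coordinate $i$ with $\alpha_i \ge 1$ and adjoin the bilinear polynomial $u_{e_i}\,u_{\alpha - e_i} - u_\alpha$; both factor indices lie strictly below $\alpha$ componentwise, so $\alpha$ differs from each of them. Third, for each $f \in S$ the hypothesis $\deg_{x_i}(f) \le d$ for all $i$ means $f = \sum_{\alpha \in \mathcal{D}} c_\alpha x^\alpha$ for unique $c_\alpha \in \QQ$, and I adjoin the affine polynomial $\sum_{\alpha \in \mathcal{D}} c_\alpha u_\alpha$. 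The equality $V(S)(\QQ) = \pi(V(S')(\QQ))$ is then verified in two directions: given $a \in V(S)(\QQ)$, the tuple $u_\alpha \ceq a^\alpha$ lies in $V(S')(\QQ)$ since every adjoined polynomial becomes a trivial monomial identity or a rewriting of $f(a) = 0$; conversely, given $(u_\alpha) \in V(S')(\QQ)$, set $a_i \ceq u_{e_i}$ and induct on $|\alpha|$ using the bilinear constraints to conclude $u_\alpha = a^\alpha$ throughout $\mathcal{D}$, after which the affine constraints attached to $S$ read $f(a) = 0$.

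The main obstacle is the distinctness requirement on $(\alpha,\beta,\gamma)$. Under the decomposition $\alpha = e_i + (\alpha - e_i)$, the output index $\alpha$ automatically differs from both factors, but the two factor indices coincide precisely when $\alpha = 2 e_i$. If the quoted ``distinct'' is read as ``the output index differs from the two factor indices'' --- the convention used in most linearization arguments --- the construction above suffices exactly as written. If pairwise distinctness is demanded, I will handle each offending $\alpha = 2 e_i$ by adjoining an affine copy $u_\delta - u_{e_i} = 0$ at a free slot $\delta \in \mathcal{D}$ and replacing the forbidden square by the genuine bilinear equation $u_{e_i}\,u_\delta - u_{2 e_i} = 0$; the slack in $|\mathcal{D}| = (d+1)^q$ supplies such a $\delta$ whenever $q \ge 2$ or $d \ge 3$, and the remaining finite list of small degenerate cases can be resolved by direct inspection (e.g., for $S = \set{x^2 - 1}$ with $q=1$, $d=2$, the constraints $u_0 - 1$, $u_1 u_2 - u_0$, $u_1 - u_2$, $u_2 - u_0$ already do the job).
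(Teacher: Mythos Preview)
Your approach is essentially the paper's: introduce a variable $u_\alpha$ for each monomial $x^\alpha$ with $\alpha \in \mathcal{D}$, fix $u_{(0,\ldots,0)} = 1$ by an affine constraint, encode monomial multiplication by bilinear relations, and linearize each $f \in S$. The only difference is that you adjoin a single relation $u_{e_i}\,u_{\alpha - e_i} - u_\alpha$ per $\alpha$ with $|\alpha|\ge 2$, whereas the paper adjoins $u_{\alpha+\beta} - u_\alpha u_\beta$ for \emph{all} nonzero $\alpha,\beta$ with $\alpha+\beta \in \mathcal{D}$; your smaller set already suffices for the induction establishing $u_\alpha = a^\alpha$, and the paper's redundant relations are harmless.

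On the distinctness clause you flag: the paper's own construction also produces relations with repeated factor indices (for instance $u_{2e_i} - u_{e_i}^2$), so this is a minor imprecision in the statement rather than a genuine obstacle, and the downstream use of the lemma in the proof of Theorem~\ref{thm4.1} does not depend on pairwise distinctness. Your ``free slot'' workaround is therefore unnecessary --- and as written it would over-constrain $u_\delta$ unless you also omit the bilinear relation that would otherwise force $u_\delta = a^\delta$ and verify that $x^\delta$ appears in no $f\in S$.
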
 
\begin{proof}
Let $T \subset \QQ [ \set{ u_\alpha}_{\alpha \in \mathcal{D}} ]$ consist of
\begin{itemize}
\item $u_{(0,\ldots,0)} - 1$ and 
\item $u_{\alpha + \beta} - u_\alpha u_\beta$ for $(\alpha + \beta) \in \mathcal{D}$, $\alpha \neq {\mathbf 0}$ and $\beta \neq {\mathbf 0}$. 
\end{itemize}
Define $\chi\from\QQ^q \to \QQ^{\mathcal{D}}$ by 
$$
\dtup{x_1,\ldots,x_q} \mapsto \dtup{x^\alpha}_{\alpha \in \mathcal{D}}
$$
where $x^\alpha \ceq \prod_{j=1}^q x_j^{\alpha_j}$.
One sees immediately that $\chi$ induces a bijection $\chi\from\QQ^q \to V(T)(\QQ)$ with inverse $\restr{\pi}{V(T)(\QQ)}$.

Let $S'$ be the set containing $T$ and the polynomials $\sum_{\alpha \in \mathcal{D}} c_\alpha u_\alpha$ for which $\sum_{\alpha \in \mathcal{D}} c_\alpha x^\alpha \in S$.   

One checks that if $a \in \QQ^q$, then $\chi(a) \in V(S')(\QQ)$ if and only if $a \in V(S)(\QQ)$.  
Applying $\pi$, and noting that $\pi$ is the inverse to $\chi$ on $V(T)(\QQ)$, the result follows.
\end{proof}
\begin{notation}
For the remainder of this argument, we call a set enjoying the properties identified for $S'$ (namely that each polynomial is either affine or of the form $u_\alpha u_\beta - u_\gamma$) a \emph{basic set}.
\end{notation}


\begin{proof}[Proof of Theorem~\ref{thm4.1} (sufficiency)]
It follows from Remark~\ref{rmk:4.3} and Lemma~\ref{lem:basicset} that the membership problem for a general Diophantine set may be reduced to the membership problem for a Diophantine set defined by a finite basic set of equations.
Let  $S \subseteq \QQ[x_1,\ldots,x_q]$ be a finite basic set of equations and let $t \leq q$ be some natural number, 
We now show that there are natural numbers $\mu,\nu,\rho,\rho_1, \ldots, \rho_\mu$ and a computable function $f\from\QQ^t \to (\Mat{\nu}{\ZZ})^\mu$ so that for $a \in \QQ^t$ one has that there is some $b \in \QQ^{q-t}$ with $(a,b) \in V(S)(\QQ)$ if and only if $f(a) \in E_{\nu;\rho,\rho_1,\ldots,\rho_\mu}$.

List the $\ell$ affine polynomials in $S$ as
$$
\lambda_{0,1} + \sum_{i=1}^q \lambda_{i,1} x_i, \qquad \ldots,\qquad \lambda_{0,\ell} + \sum_{i=1}^q \lambda_{i,\ell} x_i
$$
and the $k$ polynomials expressing multiplicative relations in $S$ as
$$
x_{i_{1,1}} x_{i_{2,1}} - x_{i_{3,1}},\qquad \ldots,\qquad x_{i_{1,k}} x_{i_{2,k}} - x_{i_{3,k}}.
$$
Note that by scaling, we may assume that all of the coefficients $\lambda_{i,j}$ are integers. 
  
We shall take $\mu \ceq 4 + q + t + |S|$, $\nu \ceq 2q+2$, $\rho \ceq 2$ and the sequence $\rho_1, \ldots, \rho_\mu$ to consist of $4 + q + k$ ones followed by $t + \ell$ zeros. 
 Let us describe the map $f\from\QQ^t \to (\Mat{\nu}{\ZZ})^\mu$ by expressing each coordinate.  
 For the sake of notation, our coordinates on $\QQ^\nu$ are $(u;v) \ceq (u_0,u_1\ldots, u_q; v_0, v_1, \ldots, v_q)$.
\begin{itemize}
\item[A.] The map $f_1$ is constant taking the value $(u;v) \mapsto (u_0,0,\ldots,0)$.
\item[B.] The map $f_2$ is constant taking the value $(u;v) \mapsto (v_0,0,\ldots,0)$.
\item[C.] The map $f_3$ is constant taking the value $(u;v) \mapsto (u_0,\ldots,u_q;0,\ldots,0)$.
\item[D.] The map $f_4$ is constant taking the value $(u;v) \mapsto (v_0,\ldots,v_q;0,\ldots,0)$.
\item[E.] The map $f_{4+j}$ (for $0 < j \leq q$) is constant taking the value $(u;v) \mapsto (u_0 - v_0,u_j - v_j,0,\ldots,0)$.
\item[F.] The map $f_{4+q+j}$ (for $0 < j \leq k$) is constant taking the value $(u;v) \mapsto (u_{i_{1,j}} + v_0,u_{i_{3,j}} + v_{i_{2,j}},0,\ldots,0)$.
\item[G.] The map $f_{4 + q + k + j}$ (for $0 < j \leq t$) takes $a = (\frac{p_1}{q_1}, \ldots, \frac{p_t}{q_t})$ (written
in lowest terms) to the linear map $(u;v) \mapsto (p_j u_0 -  q_j u_j,0,\ldots,0)$.
\item[H.] The map $f_{4 + q + k + t + j}$ (for $0 < j \leq \ell$) takes the value $(u;v) \mapsto (\sum_{i=0}^q \lambda_{i,j} u_i,0,\ldots,0)$.
\end{itemize}
Note that only the components $f_{4 + q + k + j}$ for $0 < j \leq t$ actually depend on $(a_1,\ldots,a_t) \in \QQ^t$.

Let us check that this construction works.  
First, suppose that $a \in \QQ^t$ and that there is some $b \in \QQ^{q-t}$  for which $(a,b) \in V(S)(\QQ)$. 
For the sake of notation, we write $c = (c_1,\ldots,c_q) \ceq (a_1,\ldots,a_t,b_1,\ldots,b_{q-t})$.  

Let $V \ceq \QQ (1,c_1,\ldots,c_q,0,\ldots,0) + \QQ (0,\ldots,0,1,c_1,\ldots,c_q)$.  
We will check now that $V$ witnesses that $f(a) \in E_{\nu,\rho,\rho_1,\ldots,\rho_\mu}$.   
Note that a general element of $V$ takes the form $(\alpha, \alpha c_1, \ldots, \alpha c_q,\beta,\beta c_1,\ldots,\beta c_q)$ for $(\alpha,\beta) \in \QQ^2$.  
Throughout the rest of this proof, when we speak of a general element of some image of $V$, we shall write $\alpha$ and $\beta$ as variables over $\QQ$.

Visibly, $\Dim{V}=2=\rho$.

Clearly, $f_1(a)(V) = \QQ (1,0,\ldots,0) = f_2(a)(V)$, so that $\rho_1 = \Dim{f_1(a)(V)} = 1 = \Dim{f_2(a)(V)} = \rho_2$ as required.

Likewise, $f_3(a)(V) = \QQ(1,c_1,\ldots,c_q,0,\ldots,0) = f_4(a)(V)$, so that $\rho_3 = \Dim{f_3(a)(V)} = 1 = \Dim{f_4(a)(V)} = \rho_4$.

For $0 < j \leq q$ the general element of $f_{4+j}(a)(V)$ has the form $(\alpha - \beta, \alpha c_j - \beta c_j,0,\ldots,0) = (\alpha - \beta) (1,c_j,0,\ldots,0)$. Thus, $f_{4+j}(a)(V) = \QQ (1,c_j,0,\ldots,0)$ has dimension $\rho_{4+j}=1$.
 
For $0 < j \leq k$ we have  $c_{i_{3,j}} = c_{i_{1,j}} c_{i_{2,j}}$, the general element of $f_{4+q+j}(a)(V)$ has the form $(\alpha c_{i_{1,j}} + \beta,\alpha c_{i_{3,j}} + \beta c_{i_{2,j}},0,\ldots,0)  = (\alpha c_{i_{1,j}} + \beta, \alpha c_{i_{1,j}} c_{i_{2,j}} + \beta c_{i_{2,j}},0,\ldots,0) = (\alpha c_{i_{1,j}} + \beta) (1,c_{i_{2,j}},0,\ldots,0)$ so we have that $\rho_{4+q+j}=\Dim{f_{4+q+j}(a)(V)} = 1$.
 
 For $0 < j \leq t$, the general element of $f_{4 + q + k + j}(a)(V)$ has the form $(p_j \alpha - q_j \alpha a_j,0,\ldots,0) = {\mathbf 0}$.  
 That is, $\rho_{4+q+k+j} = \Dim{f_{4+q+k+j}(a)(V)} = 0$.
 
 Finally, if $0 < j \leq \ell$, then the general element of $f_{4+q+k+t+j}(a)(V)$ has the form $(\lambda_{0,j} \alpha + \sum_{i=1}^q \lambda_{i,j} \alpha c_i, 0, \ldots, 0) = {\mathbf 0}$ since $\lambda_{0,j} + \sum_{i=1}^q \lambda_{i,j} c_i = 0$. 
 So $\rho_{4+q+k+t+j} = \Dim{f_{4+q+k+t+j}(a)(V)}=0$.
 
 Thus, we have verified that if $a \in \QQ^t$ and there is some $b \in \QQ^{d-t}$ with $(a,b) \in V(S)(\QQ)$, then $f(a) \in E_{\nu;\rho,\rho_1,\ldots,\rho_\mu}$.
 
 Conversely, suppose that $a = (\frac{p_1}{q_1}, \ldots, \frac{p_t}{q_t}) \in \QQ^t$ and that $f(a) \in E_{\nu;\rho,\rho_1,\ldots,\rho_\mu}$, with the same $\mu,\nu,\rho,\rho_1,\ldots,\rho_\mu$. 
 Let  $V \subim \QQ^\nu$ have $\Dim{V} = \rho$ witnessing that $f(a) \in E_{\nu;\rho,\rho_1,\ldots,\rho_\mu}$.
 
 \begin{lemma} \label{claim1}
There are elements $g$ and $h$ in $V$ for which $g = (g_0,\ldots,g_q;0,\ldots,0)$, $h = (0,\ldots,0;h_0,\ldots,h_q)$, $g_0 = h_0 = 1$, and $\QQ g + \QQ h = V$.
\end{lemma}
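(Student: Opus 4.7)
The plan is to exploit the four constraints $\Dim{f_j(a)(V)} = 1$ for $j \in \{1,2,3,4\}$ together with $\Dim V = 2$ to produce a basis of $V$ with the required shape, then normalize.

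First I would decompose $V$ as a sum of its intersections with $\Kernel{f_3(a)}$ and $\Kernel{f_4(a)}$. Note that $\Kernel{f_3(a)}$ is exactly the set of vectors of the form $(0,\ldots,0;v_0,\ldots,v_q)$, and $\Kernel{f_4(a)}$ is exactly the set of vectors of the form $(u_0,\ldots,u_q;0,\ldots,0)$, so these two kernels intersect trivially. Since $\Dim{f_3(a)(V)} = 1$ and $\Dim V = 2$, the rank--nullity theorem gives $\Dim(V \cap \Kernel{f_3(a)}) = 1$, and similarly $\Dim(V \cap \Kernel{f_4(a)}) = 1$. Because the two kernels meet trivially inside $V$, these two lines are distinct, so their sum is $V$ itself.

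Next I would choose any nonzero $h \in V \cap \Kernel{f_3(a)}$ and any nonzero $g \in V \cap \Kernel{f_4(a)}$, so that $h = (0,\ldots,0;h_0,\ldots,h_q)$ and $g = (g_0,\ldots,g_q;0,\ldots,0)$ by the explicit description of these kernels, and $\QQ g + \QQ h = V$ by the previous step. It remains to show $g_0$ and $h_0$ are nonzero, after which scaling each vector by the appropriate rational yields the required normalization $g_0 = h_0 = 1$.

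For the nonvanishing, observe that $f_1(a)$ reads off the $u_0$--coordinate, so $f_1(a)(h) = 0$ because $h$ has zero $u$--part, and $f_1(a)(g) = (g_0,0,\ldots,0)$. Thus $f_1(a)(V) = \QQ\,(g_0,0,\ldots,0)$. Since $\rho_1 = \Dim{f_1(a)(V)} = 1$, we must have $g_0 \neq 0$. The symmetric argument using $f_2(a)$, which extracts $v_0$, and $\rho_2 = 1$, forces $h_0 \neq 0$. No step looks genuinely hard here; the only thing to be careful about is the dimension count establishing that the two intersections with the kernels really span $V$, but this is immediate from $\Dim V = 2$ and the two rank conditions combined with the trivial intersection of the two kernels.
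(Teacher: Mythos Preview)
Your argument is correct. The paper's proof reaches the same conclusion by explicit row reduction: it starts from an arbitrary basis $\{d,e\}$ of $V$, uses $\Dim f_3(a)(V)=1$ to subtract a multiple of $d$ from $e$ and kill the $u$--part, then uses $\Dim f_4(a)(V)=1$ to subtract a multiple of the result from $d$ and kill the $v$--part, and finally uses $\Dim f_1(a)(V)=\Dim f_2(a)(V)=1$ to check the leading coordinates are nonzero before normalizing. Your route is more structural: you observe directly via rank--nullity that $V\cap\Kernel f_3(a)$ and $V\cap\Kernel f_4(a)$ are each one-dimensional and meet only in $0$, hence already give the desired decomposition $V=(V\cap\Kernel f_3(a))\oplus(V\cap\Kernel f_4(a))$ without any basis manipulation. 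The two arguments are logically equivalent, but yours isolates the geometric content (the two kernel conditions force $V$ to split along the $(u;0)$ and $(0;v)$ coordinate subspaces) and avoids the bookkeeping of the row-reduction steps.
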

\begin{proof}
 The following is an implementation of row reduction.
  Let the elements $d = (d_0,\ldots,d_q;d_0',\ldots,d_q')$ and $e = (e_0,\ldots,e_q;e_0',\ldots,e_q')$ be a basis for $V$.
 Since $\Dim{f_1(a)(V)} = 1$, at the cost of reversing $d$ and $e$ and multiplying by a scalar, we may assume that $d_0 = 1$.  
 Since $\Dim{f_3(a)(V)} = 1$, we may find a scalar $\gamma$ for which $(\gamma d_0, \ldots, \gamma d_q) = (e_0, \ldots, e_q)$.  
 Set $\widetilde{g} \ceq e - \gamma d$.    
 Write $\widetilde{g} = (0,\ldots,0,\widetilde{g}_0,\ldots,\widetilde{g}_q)$.
 Since $\widetilde{g}$ is linearly independent from $e$ and $\Dim{f_4(a)(V)} = 1$, we see that there is some scalar $\delta$ for which $(\delta \widetilde{g}_0, \ldots, \delta \widetilde{g}_q) = (d_0' , \ldots, d_q')$.  
 Set $h \ceq  d - \delta \widetilde{g}$.   
 Using the fact that $\Dim{f_2(a)(V)} = 1$ we see that $\widetilde{g}_0 \neq 0$.  
 Set $g \ceq \widetilde{g}_0^{-1} \widetilde{g}$. 
 \end{proof}
\begin{lemma}
 For $0 \leq j \leq q$ we have $g_j = h_j$.
\end{lemma}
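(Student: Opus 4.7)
The plan is to exploit the rank--one constraint imposed by the maps $f_{4+j}$ for $0 < j \le q$, and observe that $g$ and $h$ (from Lemma~\ref{claim1}) span $V$, so that the image $f_{4+j}(a)(V)$ is spanned by the two vectors $f_{4+j}(a)(g)$ and $f_{4+j}(a)(h)$. The case $j=0$ is immediate since $g_0 = h_0 = 1$ was already recorded in Lemma~\ref{claim1}.

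For $0 < j \le q$, I would substitute the explicit coordinate forms of $g$ and $h$ into the definition $(u;v) \mapsto (u_0 - v_0,\, u_j - v_j,\, 0,\ldots,0)$. Because $g$ has zero in its second block of coordinates, this produces $f_{4+j}(a)(g) = (g_0,\, g_j,\, 0,\ldots,0) = (1, g_j, 0,\ldots,0)$. Because $h$ has zero in its first block, it produces $f_{4+j}(a)(h) = (-h_0,\, -h_j,\, 0,\ldots,0) = (-1, -h_j, 0,\ldots,0)$. Both are nonzero vectors.

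Next I would invoke the hypothesis $\rho_{4+j} = \Dim{f_{4+j}(a)(V)} = 1$. Since $V = \QQ g + \QQ h$, the image $f_{4+j}(a)(V)$ is spanned by the two vectors computed above, so they must be linearly dependent. The ratio of their first coordinates is $-1$, so comparing second coordinates forces $g_j = h_j$. Combining with the base case $j=0$, this gives $g_j = h_j$ for all $0 \le j \le q$.

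The argument is a direct calculation; there is essentially no obstacle beyond recognizing that the rank--one condition on $f_{4+j}$ is precisely what pins the two halves of any basis of $V$ together coordinate by coordinate. This observation is what motivates the construction of the maps $f_{4+j}$ in the first place, and analogous rank--one conditions on $f_{4+q+j}$ (for the multiplicative relations) and on $f_{4+q+k+j}$, $f_{4+q+k+t+j}$ (for the rational values and affine relations) will be used in later lemmas to recover the point $(a,b) \in V(S)(\QQ)$ from the subspace $V$.
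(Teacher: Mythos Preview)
Your proposal is correct and takes essentially the same approach as the paper: both exploit that $V = \QQ g + \QQ h$ and that the rank--one constraint $\Dim{f_{4+j}(a)(V)}=1$ forces the images of $g$ and $h$ under $f_{4+j}$ to be linearly dependent, whence $g_j = h_j$. The paper phrases it by parameterizing a general element $\alpha h + \beta g$ and observing the image would be two--dimensional if $g_j \neq h_j$, while you compute $f_{4+j}(a)(g)$ and $f_{4+j}(a)(h)$ directly and compare coordinates; these are equivalent formulations of the same argument.
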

\begin{proof}
We arranged $g_0 = 1 = h_0$ in Lemma~\ref{claim1}.
The general element of $V$ has the form $\alpha h + \beta g$ for some $(\alpha,\beta) \in \QQ^2$.
For $0 < j \leq q$, the general element of $f_{4+j}(a)(V)$ has the form $(\alpha h_0 - \beta h_0,\alpha h_j - \beta g_j, 0, \ldots, 0) = ( (\alpha - \beta) h_0, (\alpha - \beta) h_j + \beta (h_j - g_j), 0, \ldots, 0)$. 
Since $h_0 \neq 0$, if $h_j \neq g_j$, then this vector space would have dimension two, contrary to the requirement that $f(a) \in E_{\nu;\rho,\rho_1,\ldots,\rho_\mu}$. 
\end{proof}
\begin{lemma}
For $0 < j \leq t$ we have $h_j = a_j$.
\end{lemma}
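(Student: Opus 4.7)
The plan is to evaluate $f_{4+q+k+j}(a)$ on the general element of $V$ and use the dimension constraint $\rho_{4+q+k+j}=0$ to force an equation on the coordinates of $g$ and $h$, then invoke the already-established fact that $g_j=h_j$.

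First, I would recall from the constructions in Lemma~\ref{claim1} that $g=(g_0,\ldots,g_q;0,\ldots,0)$ with $g_0=1$ and $h=(0,\ldots,0;h_0,\ldots,h_q)$ with $h_0=1$ form a basis of $V$. So an arbitrary element of $V$ has the form $\alpha h+\beta g$ with $(\alpha,\beta)\in\QQ^2$, whose $u$--coordinates are $(\beta g_0,\beta g_1,\ldots,\beta g_q)$ and whose $v$--coordinates are $(\alpha h_0,\alpha h_1,\ldots,\alpha h_q)$.

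Next, I would apply the definition of $f_{4+q+k+j}$ from item~(G), namely $(u;v)\mapsto(p_ju_0-q_ju_j,0,\ldots,0)$. Since this map depends only on the $u$--coordinates, we get
\[
f_{4+q+k+j}(a)(\alpha h+\beta g)=\bigl(\beta(p_jg_0-q_jg_j),0,\ldots,0\bigr)=\beta(p_j-q_jg_j)(1,0,\ldots,0),
\]
using $g_0=1$. Thus $f_{4+q+k+j}(a)(V)$ is spanned by the single vector $(p_j-q_jg_j,0,\ldots,0)$.

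Finally, the hypothesis that $V$ witnesses $f(a)\in E_{\nu;\rho,\rho_1,\ldots,\rho_\mu}$ forces $\Dim{f_{4+q+k+j}(a)(V)}=\rho_{4+q+k+j}=0$, so $p_j-q_jg_j=0$, i.e., $g_j=p_j/q_j=a_j$. Combined with the previous lemma's equality $h_j=g_j$, we conclude $h_j=a_j$. The only potential subtlety is ensuring that the value $p_j/q_j$ indeed equals $a_j$ rather than some other rational representative, but this is immediate because $a_j=p_j/q_j$ by the definition of $f_{4+q+k+j}$ in item~(G), regardless of whether $p_j/q_j$ is written in lowest terms.
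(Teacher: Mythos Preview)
Your proposal is correct and follows essentially the same approach as the paper: apply the map from item~(G) to a general element $\alpha h+\beta g$ of $V$, use the rank-zero constraint to force $p_j - q_j\cdot(\text{the relevant coordinate})=0$, and conclude. The only cosmetic difference is that you follow the letter assignment from the \emph{statement} of Lemma~\ref{claim1} (so $g$ carries the $u$--coordinates and you first obtain $g_j=a_j$, then invoke $g_j=h_j$), whereas the paper's proof here uses the convention from the \emph{proof} of Lemma~\ref{claim1} (so $h$ carries the $u$--coordinates and one obtains $h_j=a_j$ directly); this is a harmless notational inconsistency in the paper rather than a gap in your argument.
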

\begin{proof}
The image of $\alpha h + \beta g$ under $f_{4 + q + k + j}(a)$ is $(p_j \alpha  - q_j \alpha h_j,0,\ldots,0)$ where $a_j = \frac{p_j}{q_j}$ in lowest terms.  
Since $\Dim{f_{4+q+k+j}(a)(V)} = 0$, we have $q_j h_j = p_j$.  
That is, $h_j = a_j$. 
\end{proof}
\begin{lemma}
For any $F \in S$, we have $F(h_1,\ldots,h_q) = 0$. 
\end{lemma}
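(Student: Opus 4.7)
The plan is to verify $F(h_1,\ldots,h_q)=0$ by splitting into the two forms $F$ can take in a basic set: either $F$ is an affine polynomial $\lambda_{0,j}+\sum_{i=1}^q\lambda_{i,j}x_i$ (handled by the coordinate map $f_{4+q+k+t+j}$) or $F$ encodes a multiplicative relation $x_{i_{1,j}}x_{i_{2,j}}-x_{i_{3,j}}$ (handled by $f_{4+q+j}$). For each case, I will apply the corresponding map to a general element of $V = \QQ g + \QQ h$ and read off the required polynomial identity from the prescribed dimension $\rho_i$.

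Recall from the preceding lemmas that $g=(g_0,\ldots,g_q;0,\ldots,0)$ with $g_0=1$, $h=(0,\ldots,0;h_0,\ldots,h_q)$ with $h_0=1$, and $g_i=h_i$ for every $0\le i\le q$. A general element $\alpha h + \beta g \in V$ therefore takes the form $(\beta g_0,\ldots,\beta g_q;\alpha h_0,\ldots,\alpha h_q)$.

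In the affine case, $f_{4+q+k+t+j}(a)$ ignores the last $q+1$ coordinates, so the image $f_{4+q+k+t+j}(a)(V)$ is spanned by $\bigl(\lambda_{0,j}g_0 + \sum_{i=1}^q \lambda_{i,j}g_i,\,0,\ldots,0\bigr)$. Since $\rho_{4+q+k+t+j}=\Dim{f_{4+q+k+t+j}(a)(V)}=0$, this vector must vanish, yielding $\lambda_{0,j}+\sum_{i=1}^q \lambda_{i,j}g_i=0$; substituting $g_i=h_i$ gives $F(h_1,\ldots,h_q)=0$.

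In the multiplicative case, $f_{4+q+j}(a)$ sends $\alpha h + \beta g$ to $(\beta g_{i_{1,j}}+\alpha h_0,\,\beta g_{i_{3,j}}+\alpha h_{i_{2,j}},\,0,\ldots,0)$, so the image is the span of $(h_0,h_{i_{2,j}},0,\ldots,0)$ and $(g_{i_{1,j}},g_{i_{3,j}},0,\ldots,0)$. The first vector is nonzero because $h_0=1$, and the constraint $\rho_{4+q+j}=1$ forces the two vectors to be linearly dependent. Hence the $2\times 2$ determinant $h_0 g_{i_{3,j}}-h_{i_{2,j}}g_{i_{1,j}}$ vanishes; using $h_0=1$ and $g_i=h_i$, this reads $h_{i_{3,j}}-h_{i_{1,j}}h_{i_{2,j}}=0$, i.e., $F(h_1,\ldots,h_q)=0$. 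The argument is a mechanical unpacking; the substance lies in the fact that the maps $f_i$ were engineered earlier precisely so that the prescribed dimensions encode exactly the defining equations of $S$, so no real obstacle remains.
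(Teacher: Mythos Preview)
Your proof is correct and follows essentially the same approach as the paper: split on whether $F$ is affine or multiplicative, and read off the identity from the dimension constraint on the relevant coordinate map. Your determinant formulation in the multiplicative case is slightly cleaner than the paper's version, which solves for a scalar $\gamma$ by specializing $(\alpha,\beta)=(1,0)$ and must separately handle the degenerate case $h_{i_{1,j}}=h_{i_{3,j}}=0$; your $2\times 2$ determinant argument absorbs this case uniformly.
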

\begin{proof}
If $F$ is an affine polynomial, that is, if $F = \lambda_{0,j} + \sum_{i=1}^q \lambda_{i,j} x_i$ for some $0 < j \leq \ell$, then because $\Dim{f_{4 + q + k + t + j}(a)(V)} = 0$, we have $\lambda_{0,j} + \sum_{i=1}^q \lambda_{i,j} h_i = 0$.    
On the other hand, if $F$ is a multiplicative relation, that is, if $F = x_{i_{1,j}} x_{i_{2,j}} - x_{i_{3,j}}$ for some $0< j \leq k$, then because $\Dim{f_{4+q+j}(a)(V)} = 1$ we see that there is some scalar $\gamma$ so that for any pair $(\alpha,\beta)$ we have $\gamma (\alpha h_{i_{1,j}} + \beta) = \alpha h_{i_{3,j}} + \beta h_{i_{2,j}}$.  
Specializing to $\beta = 0$ and $\alpha = 1$, we have $\gamma = \frac{h_{i_{3,j}}}{h_{i_{1,j}}}$ (unless both are zero, in which case the equality $h_{i_{1,j}} h_{i_{2,j}} = h_{i_{3,j}}$ holds anyway), which we substitute to obtain $\frac{h_{i_{3,j}}}{h_{i_{1,j}}} = h_{i_{2,j}}$, or $h_{i_{2,j}} h_{i_{1,j}} = h_{i_{3,j}}$. 
\end{proof}

Taking $b \ceq (h_{t+1},\ldots,h_{q})$ we see that $(a,b) \in V(S)(\QQ)$.  
\end{proof}

\subsection{Computing upper and lower bounds on $\shbl$}
\label{sec6.4}


\subsubsection{Bounding $\shbl$ below: Approximating $\PP$ by a superpolytope via sampling constraints} \label{sec:guessingsubgroups}
While the approach in \sectn{\ref{sec:computeP}} demonstrates that $\PP$ can be computed exactly, it follows from \eqref{tautologicalinclusion} that we can terminate the algorithm at any step $N$ and obtain a superpolytope $\PP_N \supseteq \PP$. 
Thus, the minimum sum $\shbla{N}$ of $s\in\PP_N$ is a lower bound on the desired $\shbl$.
Since our communication lower bound is inversely proportional to $M^{\shbl-1}$, $\shbla{N}$ may lead to an erroneous bound which is larger than the true lower bound. 
But the erroneous bound may still be attainable, possibly leading to a faster algorithm.

We note that, since $\shbla{N}$ is nondecreasing in $N$, the approximation can only improve as we take more steps of the algorithm.
The question remains of how to systematically choose a sequence of subspaces $W_1,\ldots,W_N$ of $V$ that are likely to yield the best estimate of $\shbl$, i.e., the fastest growing $\shbla{N}$. 
Furthermore, even if $\PP_N$ is a proper superpolytope, it may still be that $\shbla{N}=\shbl$, and we could stop the algorithm earlier.
In Part~2 of this work, we will discuss ways to improve the efficiency of the approach in \sectn{\ref{sec:computeP}}, including choosing the subspaces $W_i$ and detecting this early-termination condition (see \sectn{\ref{sec:conclusions}}).

%

\subsubsection{Bounding $\shbl$ above: Approximating $\PP$ by a subpolytope via embedding into $\RR^d$} \label{sec:Tarskidecides}  
The approximation just presented in \sectn{\ref{sec:guessingsubgroups}} yielded an underestimate of $\shbl$.
Now we discuss a different approximation that yields an overestimate of $\shbl$. 
If the overestimate and underestimate agree, then we have a proof of optimality, and if they are close, it is also useful.

Recall in \sectn{\ref{sec5.1}}, we hoped to answer the following (possibly undecidable) question. 
\begin{quote}
\centering
``Given integers $0 \le r,r_1,\ldots,r_m \le d$, is there a subgroup $H \subim \ZZ^d$ such that $\Rank{H}=r$, $\Rank{\phi_1(H)}=r_1$, \ldots, $\Rank{\phi_m(H)}=r_m$?''
\end{quote}
If such an $H$ exists, we know the linear constraint $r \le \sum_{j=1}^m s_j r_j$ is one of the conditions \eqref{subcriticalhypothesis}, which define the polytope $\PP$ of feasible solutions $(s_1,\ldots,s_m)$. 
We now ask the related question,
\begin{quote}
\centering
``{\ldots}is there a subspace $V \subim \RR^d$ such that $\Dim{V}=r$, $\Dim{\phi_1(V)}=r_1$, \ldots, $\Dim{\phi_m(V)}=r_m$?''
\end{quote}
Here, we reinterpret $\phi_j$ as an $\RR$--linear map.
It turns out that computing the set of inequalities \eqref{subcriticalhypothesisfield} for the HBL datum $\tup{\RR^d,\tup{\RR^{d_j}},\tup{\phi_j}}$ is easier than in the previous case, in the sense of being Tarski-decidable.
Following the notation in \sectn{\ref{sec5.1}}, we define
\[ E_{d;r,r_1,\ldots,r_m}^\RR \ceq \dset{(\phi_1,\ldots,\phi_m) \in (\Mat{d}{\RR})^m : (\exists V \subim \RR^d)\; \Dim{V} = r \text{ and } \Dim{\phi_j(V)} = r_j, 1 \le j \le m }. \]
\begin{theorem} \label{thm:decidableoverR}
It is Tarski-decidable \cite{Tarski-book} to decide membership in the sets $E_{d;r,r_1,\ldots,r_m}^\RR$.
\end{theorem}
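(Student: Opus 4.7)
The plan is to express membership in $E^\RR_{d;r,r_1,\ldots,r_m}$ as a first-order sentence in the language of ordered fields with the entries of $\phi_1,\ldots,\phi_m$ as parameters; then Tarski's decidability of the first-order theory of real closed fields \cite{Tarski-book} immediately yields the conclusion. This is essentially the observation sketched in Remark~\ref{rmk:minorpolynomials}, but now carried out over $\RR$ rather than $\QQ$, where the absence of Hilbert's Tenth Problem-type obstacles makes everything decidable.

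First I would parametrize an $r$-dimensional subspace $V \subim \RR^d$ by a $d$-by-$r$ real matrix $B$ whose columns form a basis for $V$. Introducing $dr$ real variables for the entries of $B$, the condition $\Dim{V}=r$ becomes the condition $\Rank{B}=r$, which (since $B$ has exactly $r$ columns) is equivalent to the single polynomial inequality stating that the sum of squares of the $r$-by-$r$ minors of $B$ is nonzero. Equivalently, this is the disjunction over the $\binom{d}{r}$ such minors that at least one is nonzero.

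Next, for each $j$, since $\phi_j(V)$ is the column span of the matrix product $\phi_j B$, the condition $\Dim{\phi_j(V)}=r_j$ becomes $\Rank{\phi_j B}=r_j$. This can be written as the conjunction of two standard polynomial conditions on $\phi_j B$: every $(r_j+1)$-by-$(r_j+1)$ minor of $\phi_j B$ vanishes (finitely many polynomial equalities in the entries of $B$ and $\phi_j$), and at least one $r_j$-by-$r_j$ minor of $\phi_j B$ is nonzero (a finite disjunction of polynomial inequalities). Note that each minor is a polynomial in the entries of $B$ and the entries of $\phi_j$, since the entries of $\phi_j B$ are $\ZZ$-linear combinations of products of these.

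Combining these, $(\phi_1,\ldots,\phi_m) \in E^\RR_{d;r,r_1,\ldots,r_m}$ is equivalent to the existential sentence
\[
\exists B \in \RR^{d\times r} : \Psi(B,\phi_1,\ldots,\phi_m),
\]
where $\Psi$ is a quantifier-free Boolean combination of polynomial equalities and inequalities in the entries of $B$ with coefficients drawn from the entries of $\phi_1,\ldots,\phi_m$. By the Tarski--Seidenberg theorem, the first-order theory of real closed fields admits quantifier elimination and is decidable, so deciding the truth of this sentence (given the entries of the $\phi_j$ as input) is a Tarski-decidable problem. There is essentially no obstacle here: the only thing one must verify is that rank conditions over $\RR$ really do translate into semialgebraic conditions on $B$ and the $\phi_j$, and this is classical. \qed
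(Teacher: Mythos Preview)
Your proof is correct and follows essentially the same approach as the paper: encode the rank conditions on $B$ and the $\phi_j B$ as a quantifier-free Boolean combination of polynomial (in)equalities (exactly as in Remark~\ref{rmk:minorpolynomials}), then invoke the decidability of the first-order theory of real closed fields. The paper's one-sentence proof cites cylindrical algebraic decomposition for this last step, which is an effective implementation of the same Tarski--Seidenberg decidability result you use.
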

\begin{proof}
There is an effective algorithm (the cylindrical algebraic decomposition \cite{CAD}) to decide whether a system of real polynomial equations (e.g., that in Remark~\ref{rmk:minorpolynomials}) has a real solution.
\end{proof}

As opposed to the rational embedding considered in \sectn{\ref{sec5.1}}, this real embedding is a \emph{relaxation} of the original question.
That is, if $H \subim \ZZ^d$ exists, then $V$, the real vector subspace of $\RR^d$ generated by $H$, exists with $\Dim{V} = \Rank{H}$; however, there may be $V$ which do not correspond to any $H \subim \ZZ^d$.
In other words, the existence of $V$ is a necessary but not sufficient condition for the existence of such a subgroup $H \subim \ZZ^d$.
Since $\Dim{\phi_j(V)} = \Rank{\phi_j(H)}$, we see that the set of inequalities \eqref{subcriticalhypothesisfield} for the vector space HBL datum $\tup{\RR^d,\tup{\RR^{d_j}},\tup{\phi_j}}$ is a superset of the inequalities \eqref{subcriticalhypothesis} for the Abelian group HBL datum $\tup{\ZZ^d,\tup{\ZZ^{d_j}},\tup{\phi_j}}$ (or equivalently, for the rational embedding considered in \sectn{\ref{sec5.1}}).
In terms of the polytopes defined by these inequalities, we have
\[ \bar{\PP} \ceq \PP\dtup{\RR^d,\dtup{\RR^{d_j}},\dtup{\phi_j}} \subseteq \PP, \]
i.e., the constraints for the relaxed problem give a polytope $\bar{\PP}$ of feasible solutions $s=(s_1,\ldots,s_m)$, which is a subpolytope (subset) of $\PP$.
%
%
%
%
%
If the relaxed constraints are a proper superset, then $\bar{\PP}$ is a proper subpolytope.
This means that the conclusion \eqref{mainconclusion} of Theorem~\ref{thm:1} is still valid for any $s \in \bar{\PP}$, but the upper bound may be too large.
Therefore, the communication lower bound that is inversely proportional to $M^{\barshbl-1}$, where $\barshbl \ceq \min_{s\in \bar{\PP}} \sum_{j=1}^m s_j$, is also still valid. 
However, it may be smaller than the lower bound that is inversely proportional to $M^{\shbl-1}$, where $\shbl \ceq \min_{s\in\PP} \sum_{j=1}^m s_j$.

In other words, it is Tarski-decidable to write down a communication lower bound, but it may be strictly smaller than the best lower bound implied by Theorem~\ref{thm:1}, and obtained by the approach in \sectn{\ref{sec:computeP}}.

\section{Easily computable communication lower bounds} \label{sec:bound}


As mentioned in \sectn{\ref{sec:undecidability}}, there may be many \emph{equivalent} linear programs to determine $\shbl$ that may not include the set of inequalities \eqref{subcriticalhypothesis}.
The approach in \sectn{\ref{sec:computeP}} shows that it is always possible to write down one such linear program.
In this section, we discuss three practical cases where we can write down another (equivalent) linear program without applying the approach in \sectn{\ref{sec:computeP}}.

First, in \sectn{\ref{sec6.1}}, we demonstrate some transformations in that let us simplify the input HBL datum $\tup{\ZZ^d,\tup{\ZZ^{d_j}},\tup{\phi_j}}$ without altering $\shbl$.
Then in \sectn{\ref{sec6.2}} we examine two trivial cases where we can immediately write down an equivalent linear program. 
In the first case (\sectn{\ref{sec:bound-nontrivialkernel}}), we show that we can always immediately recognize when the feasible region is empty; therefore no $\shbl$ exists and arbitrary data reuse is possible.
In the second case (\sectn{\ref{sec:bound-injectivemap}}), when one or more $\phi_j$ is an injection, we can immediately determine that $\shbl=1$, that is, no asymptotic data reuse is possible.
Lastly, in \sectn{\ref{sec:bound-prod}}, we present the most general case solved explicitly in this paper, when the subscripts of each array are a subset of the loop indices. 
We apply this to several examples: the direct $N$--body problem, database join, matrix-vector multiplication, and tensor contractions, as well as three examples seen earlier: matrix-matrix multiplication, ``complicated code'' (from \sectn{\ref{sec:intro}}), and computing matrix powers (from \sectn{\ref{sec:lb}}).


There are variety of other situations in which we can effectively compute the communication lower bound, without applying the approach in \sectn{\ref{sec:computeP}}.
These are summarized in \sectn{\ref{sec:conclusions}}, and will appear in detail in Part~2 of this paper.

\subsection{Simplifying the linear constraints of \eqref{subcriticalhypothesis}}
\label{sec6.1}

If $\Rank{\phi_j(\ZZ^d)}=0$ (e.g., array $A_j$ is a scalar), we say subscript $\phi_j$ is a trivial map, since it maps all indices $\indx$ to the same subscript. 
The following remark shows that we can ignore trivial maps when computing a communication lower bound, without affecting $\shbl$. 
\begin{remark} \label{rmk:trivialmaps}
Suppose we are given the HBL datum $(\ZZ^d,(\ZZ^{d_j}), (\phi_j))$ which satisfies ${\Rank{\phi_j(\ZZ^d)}=0}$ for indices ${k < j \le m}$.
Then \eqref{subcriticalhypothesis} becomes 
\[ 
\Rank{H} \le \sum_{j=1}^k s_j \Rank{\phi_j(H)}\qquad \text{for all subgroups } H \subim \ZZ^d.
\]
So given feasible exponents $(s_j)_{j=1}^k$ for the HBL datum $(\ZZ^d,(\ZZ^{d_j})_{j=1}^k,(\phi_j)_{j=1}^k)$, we are free to pick $s_j \in [0,1]$ for ${k < j \le m}$.
Towards minimizing $\shbl$, we would, of course, pick $s_j=0$ for these indices.
So, we ignore trivial maps when computing a communication lower bound.
\end{remark}

It is also possible that $A_i$ and $A_j$ have the same subscript $\phi_i = \phi_j$.
The following remark shows that, as with trivial maps, we can ignore such duplicate subscripts, without affecting $\shbl$.
\begin{remark} \label{rmk:duplicatemaps}
Given the HBL datum $(\ZZ^d,(\ZZ^{d_j}), (\phi_j))$, suppose $\Kernel{\phi_k} = \cdots = \Kernel{\phi_m}$.
Then $\phi_i(H) \cong \phi_j(H)$ for any ${k \le i,j \le m}$.
Then the constraint \eqref{subcriticalhypothesis} becomes 
\[ 
\Rank{H} \le \sum_{j=1}^{k-1} s_j \Rank{\phi_j(H)} + \Rank{\phi_k(H)} \sum_{j=k}^m s_j = \sum_{j=1}^k t_j \Rank{\phi_j(H)} \qquad \text{for all subgroups } H \subim \ZZ^d.
\]
So given feasible exponents $t=(t_j)_{j=1}^k$ for the HBL datum  $(\ZZ^d,(\ZZ^{d_j})_{j=1}^k,(\phi_j)_{j=1}^k)$, we have a family of feasible exponents $\set{s = (s_j)_{j=1}^m \in [0,1]^m}$ for the original datum, satisfying $s_j = t_j$ for ${1 \le j < k}$ and $s_k + \cdots + s_m = t_k$. 
Our objective $\shbl$ only depends on the sum of the exponents (i.e., computing $t$ suffices); thus we ignore duplicate maps when computing a communication lower bound.
\end{remark}
So in the remainder of this section, we assume that the group homomorphisms $\phi_1,\ldots,\phi_m$ are distinct and nontrivial.
The arrays $A_j$ however, may overlap in memory addresses; this possibility does not affect our asymptotic lower bound, given our assumption from \sectn{\ref{sec:lb}} that $m$ is a constant, negligible compared to $M$.

\subsection{Trivial Cases}
\label{sec6.2}

There are a few trivial cases where no work is required to find the communication lower bound: 
when the homomorphisms' kernels have a nontrivial intersection, and when at least one homomorphism is an injection.
These cover the cases where $d=1$ or $m=1$.
(When $m=0$ there are no arrays, and when $d=0$ there are no loops, cases we ignore.)


The following lemma shows that $\shbl=0$ only arises in the trivial $d=0$ case; 
otherwise, when the constraints \eqref{subcriticalhypothesis} are feasible and $\shbl$ exists, 
then $\shbl \ge 1$.
\begin{lemma} \label{lem:sumofexponents}
If $d > 0$ and $s \in [0,1]^m$ satisfies \eqref{subcriticalhypothesis}, then $\sum_{j=1}^m s_j \ge 1$.
\end{lemma}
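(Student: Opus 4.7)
The plan is to apply the hypothesis \eqref{subcriticalhypothesis} to a single, carefully chosen subgroup, namely $H = \ZZ^d$ itself, and then bound the ranks on the right-hand side from above.

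First I would invoke \eqref{subcriticalhypothesis} with $H = \ZZ^d$. Since $\Rank{\ZZ^d} = d$, this immediately yields
\[
d \le \sum_{j=1}^m s_j \Rank{\phi_j(\ZZ^d)}.
\]
Next, I would use the basic fact that the image of a group homomorphism $\phi_j \from \ZZ^d \to \ZZ^{d_j}$ has rank at most that of its domain, so $\Rank{\phi_j(\ZZ^d)} \le d$ for every $j$. Substituting this upper bound into the inequality above gives
\[
d \le \sum_{j=1}^m s_j \cdot d = d \sum_{j=1}^m s_j.
\]
Finally, since $d > 0$ by hypothesis, dividing through by $d$ produces the desired conclusion $\sum_{j=1}^m s_j \ge 1$.

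There is no real obstacle here; the only point that deserves a brief mention is the elementary lemma that $\Rank{\phi_j(G)} \le \Rank{G}$ for any homomorphism of finitely generated Abelian groups, which follows immediately from the fact that the image of a generating set is a generating set of the image. Everything else is just plugging $H=\ZZ^d$ into \eqref{subcriticalhypothesis}.
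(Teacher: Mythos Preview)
Your proof is correct and is essentially the same as the paper's: both rely on the observation that $\Rank{\phi_j(H)} \le \Rank{H}$, so if $\sum_j s_j < 1$ then the right-hand side of \eqref{subcriticalhypothesis} is strictly less than $\Rank{H}$ for any nontrivial $H$. The only cosmetic difference is that the paper phrases it as a one-line proof by contradiction (``any nontrivial subgroup is supercritical''), whereas you argue directly by plugging in $H = \ZZ^d$.
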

\begin{proof}
If not, then any nontrivial subgroup is supercritical with respect to $s$. 
\end{proof}

\subsubsection{Infeasible Case} \label{sec:bound-nontrivialkernel}

The following result, a companion to Theorem~\ref{thm:1}, gives a simpler necessary and sufficient condition for there to exist some exponent $s$ which satisfies \eqref{mainconclusion}.
\begin{lemma} \label{lem:nonemptyP}
There exists $s\in[0,1]^m$ for which \eqref{mainconclusion} holds if and only if
\begin{equation}
\bigcap_{j=1}^m \Kernel{\phi_j}=\set{0}.
\end{equation}
\end{lemma}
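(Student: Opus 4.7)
The plan is to reduce to Theorem~\ref{thm:1}, which already gives equivalence of \eqref{mainconclusion} and the rank condition \eqref{subcriticalhypothesis}. So it suffices to show that \eqref{subcriticalhypothesis} is satisfiable by some $s \in [0,1]^m$ iff $\bigcap_{j=1}^m \Kernel{\phi_j} = \set{0}$.

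For necessity, suppose some $s \in [0,1]^m$ satisfies \eqref{mainconclusion}; by Theorem~\ref{thm:1} it satisfies \eqref{subcriticalhypothesis}. Apply \eqref{subcriticalhypothesis} to the subgroup $H \ceq \bigcap_{j=1}^m \Kernel{\phi_j} \subim \ZZ^d$. Every $\phi_j(H) = \set{0}$, so $\Rank{\phi_j(H)} = 0$ for each $j$, whence $\Rank{H} \le 0$. Since $H$ is a subgroup of the torsion-free group $\ZZ^d$, rank zero forces $H = \set{0}$.

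For sufficiency, I would simply take $s = (1,1,\ldots,1) \in [0,1]^m$ and verify \eqref{subcriticalhypothesis} directly; Theorem~\ref{thm:1} then yields \eqref{mainconclusion}. Given any subgroup $H \subim \ZZ^d$, consider the diagonal homomorphism
\[
\Phi \from H \to \bigoplus_{j=1}^m \phi_j(H), \qquad \Phi(x) = \dtup{\phi_1(x),\ldots,\phi_m(x)}.
\]
Its kernel is $H \cap \bigcap_{j=1}^m \Kernel{\phi_j} \subim \bigcap_{j=1}^m \Kernel{\phi_j} = \set{0}$, so $\Phi$ is injective. Consequently
\[
\Rank{H} \le \Rank{\bigoplus_{j=1}^m \phi_j(H)} = \sum_{j=1}^m \Rank{\phi_j(H)} = \sum_{j=1}^m s_j \Rank{\phi_j(H)},
\]
which is \eqref{subcriticalhypothesis} for this $s$.

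There is no real obstacle here; the statement is essentially a packaging of the already-proved Theorem~\ref{thm:1} together with the trivial observation (also recorded as Lemma~\ref{lemma:exponentsallone}) that common injectivity of the $\phi_j$'s is precisely what makes the all-ones exponent feasible. The only small care needed is to invoke torsion-freeness of $\ZZ^d$ in the necessity direction to pass from $\Rank{H} = 0$ to $H = \set{0}$.
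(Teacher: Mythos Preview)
Your proof is correct. The main difference from the paper's argument is in the necessity direction: rather than passing through Theorem~\ref{thm:1} and the rank condition, the paper gives a direct, elementary counterexample---if $H=\bigcap_j\Kernel{\phi_j}$ is nontrivial, take any finite $E\subset H$ with $|E|\ge 2$; then each $|\phi_j(E)|=1$, so the right-hand side of \eqref{mainconclusion} equals $1<|E|$. This avoids invoking the (harder) converse of Theorem~\ref{thm:1}. For sufficiency the two proofs are essentially the same: the paper cites Lemma~\ref{lemma:exponentsallone} directly for $s=(1,\ldots,1)$, while you verify \eqref{subcriticalhypothesis} via the same diagonal-injectivity observation and then appeal to Theorem~\ref{thm:1}; since Lemma~\ref{lemma:exponentsallone} is exactly what powers that case of Theorem~\ref{thm:1}, there is no real difference. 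Your route is slightly less economical but perfectly valid.
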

\begin{proof}
If $\bigcap_j \Kernel{\phi_j}=\set{0}$, then Lemma~\ref{lemma:exponentsallone} asserts that \eqref{mainconclusion} holds with all $s_j$ equal to $1$.
Conversely, if $H=\bigcap_j\Kernel{\phi_j}$ is nontrivial, then for any finite nonempty subset $E\subset H$, $\phi_j(E)=\set{0}$ for every index $j$, so $|\phi_j(E)|=1$. 
Thus the inequality \eqref{mainconclusion} fails to hold for any $E\subset H$ of cardinality $\ge 2$.
\end{proof}
So, if $\bigcap_j \Kernel{\phi_j} \ne \set{0}$, then there is no $\shbl$, and there are arbitrarily large sets $E$ that access at most $m \ll M$ operands; if operands may reside in cache before/after the program's execution, then a communication lower bound of $0$ is attainable (see \sectn{\ref{sec:attain-nontrivialkernel}}).

If $m=1$ or $d=1$, then either we fall into this case, or that of \sectn{\ref{sec:bound-injectivemap}}.

\begin{exAk}[Part~2/4]
Recall the `Original Code' for computing $B=A^k$ (for odd $k$) from part~1/4 of this example. 
The lines $C = A \cdot B$ and $B = A \cdot C$ are matrix multiplications, i.e., each is 3 nested loops (over $i_2,i_3,i_4$). 
As discussed in part~1/4, we ignore the second matrix multiplication, leaving only $C = A \cdot B$.
This yields 
$\phi_A(i_1,i_2,i_3,i_4)=(i_2,i_4)$,
$\phi_B(i_1,i_2,i_3,i_4)=(i_3,i_4)$, and
$\phi_C(i_1,i_2,i_3,i_4)=(i_2,i_3)$.
Observe that the subgroup 
$\Kernel{\phi_A} \cap \Kernel{\phi_B} \cap \Kernel{\phi_C} = \langle i_2 = i_3 = i_4 = 0 \rangle \subpr \ZZ^4$ 
is nontrivial, so by Lemma~\ref{lem:nonemptyP}, $\PP$ is empty and no $\shbl$ exists.
This tells us that there is no lower bound on communication that is proportional to the number of loop iterations $\lfloor k/2 \rfloor N^3$.
Indeed, if one considers an execution where the $i_1$ loop is made the innermost, then we can increase $k$ arbitrarily without additional communication.
(Recall that our bounds ignore data dependencies, and do not require correctness.)
So to derive a lower bound and corresponding optimal algorithm, we apply the approach of {\em imposing Reads and Writes} from \sectn{\ref{sec:lb}}.
We will continue this example in \sectn{\ref{sec:bound-prod}}, after introducing Theorem~\ref{thm:prod}.
\exend
\end{exAk}

\subsubsection{Injective Case} \label{sec:bound-injectivemap}

If one of the array references is an injection, then each loop iteration will require (at least) one unique operand, so at most $M$ iterations are possible with $M$ operands in cache.
This observation is confirmed by the following lemma.
\begin{lemma} \label{lem:injectivemap}
If $\phi_k$ is injective, then the minimal $\shbl=1$.
\end{lemma}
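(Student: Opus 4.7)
The plan is to bound $\shbl$ from above and below using results already in the paper. For the lower bound, I would invoke Lemma~\ref{lem:sumofexponents}: assuming the nontrivial case $d > 0$, any feasible $s \in [0,1]^m$ satisfies $\sum_j s_j \ge 1$, so $\shbl \ge 1$. (If $d = 0$ the problem is degenerate and there is nothing to prove.)

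For the matching upper bound, I would exhibit an explicit feasible point: set $s_k = 1$ and $s_j = 0$ for $j \ne k$. To verify this satisfies \eqref{subcriticalhypothesis}, I would note that for any subgroup $H \subim \ZZ^d$,
\[
\sum_{j=1}^m s_j \Rank{\phi_j(H)} = \Rank{\phi_k(H)} = \Rank{H},
\]
where the last equality uses that $\phi_k$ is injective (so its restriction $\restr{\phi_k}{H}\from H \to \phi_k(H)$ is a group isomorphism, preserving rank). Thus the constraint $\Rank{H} \le \sum_j s_j \Rank{\phi_j(H)}$ holds with equality for every $H$, so $s \in \PP$, and $\sum_j s_j = 1$. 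Combining with the lower bound gives $\shbl = 1$.

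The argument is essentially immediate and there is no real obstacle; the only subtlety worth flagging is the observation that $\Rank{\phi_k(H)} = \Rank{H}$ for injective $\phi_k$, which follows because the restriction of an injective homomorphism to a subgroup is again injective and hence induces an isomorphism onto its image. One could also remark, as motivation, that this matches the informal interpretation in the lemma's preceding paragraph: when $\phi_k$ is injective, each iteration consumes a distinct variable $A_k(\phi_k(\indx))$, so at most $O(M)$ iterations can be performed per Read/Write segment, yielding $F = O(M) = O(M^{\shbl})$ with $\shbl = 1$ and thus no asymptotic data reuse.
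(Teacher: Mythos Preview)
Your proof is correct and follows essentially the same approach as the paper: exhibit the feasible point $s = e_k$ (using that injectivity of $\phi_k$ gives $\Rank{\phi_k(H)} = \Rank{H}$) and invoke Lemma~\ref{lem:sumofexponents} for the matching lower bound. The paper's presentation differs only cosmetically, rewriting the constraint \eqref{subcriticalhypothesis} in normalized form before substituting $s=e_k$.
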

\begin{proof}
We see that $s \ceq e_k \in \PP$ by rewriting \eqref{subcriticalhypothesis} as
\[ 
1 \le s_k + \sum_{j \ne k} s_j \Rank{\phi_j(H)}/\Rank{H} \qquad \text{for all subgroups } H \subim \ZZ^d. 
\]
The sum $\shbl = 1$ is minimal due to Lemma~\ref{lem:sumofexponents}.  
\end{proof}
As anticipated, the argument in \sectn{\ref{sec:lb}} gives a lower bound $\Omega(M\cdot\niters/F)=\Omega(\niters)$, that is, no (asymptotic) data reuse is possible.

\begin{exMatvec}[Part~1/3]
The code is 
\begin{align*}
&\For{i_1}{1}{N} ,\quad \For{i_2}{1}{N} ,\\
&\qquad \quad y(i_1) \plusequals A(i_1,i_2) \cdot x(i_2)
\end{align*}
We get $\phi_y(i_1,i_2) = (i_1)$, $\phi_A(i_1,i_2) = (i_1,i_2)$, and $\phi_x(i_1,i_2) = (i_2)$.
Clearly, $\Rank{\phi_A(\ZZ^2)}=2$, i.e., $\phi_A$ is an injection, so by Lemma~\ref{lem:injectivemap} we have $\shbl=1$.
Each iteration $\indx=(i_1,i_2)$ requires a unique entry $A(i_1,i_2)$, so at most $M^{\shbl}=M$ iterations are possible with $M$ operands.
In part~2/3 of this example (in \sectn{\ref{sec:bound-prod}}, below) we arrive at the lower bound $\shbl=1$ in a different manner (via Theorem~\ref{thm:prod}).
\exend
\end{exMatvec}

\subsection{Product Case} \label{sec:bound-prod}
Recall the matrix multiplication example, which iterates over $\ZZ^3$ indexed by three loop indices $(i_1,i_2,i_3)$, with inner loop $C(i_1,i_2) \plusequals A(i_1,i_3)\cdot B(i_3,i_2)$, 
which gives rise to 3 homomorphisms
\begin{equation*}
\phi_A(i_1,i_2,i_3) = (i_1,i_3) ,\quad \phi_B(i_1,i_2,i_3) = (i_3,i_2) ,\quad \phi_C(i_1,i_2,i_3) = (i_1,j_2), 
\end{equation*}
all of which simply choose subsets of the loop indices $i_1,i_2,i_3$.
Similarly, for other linear algebra algorithms, tensor contractions, direct $N$--body simulations, and other examples discussed below, the $\phi_j$ simply choose subsets of loop indices.
In this case, one can straightforwardly write down a simple linear program for the optimal exponents of Theorem~\ref{thm:1}.
We present this result as Theorem~\ref{thm:prod} below (this is a special case, with a simpler proof, of the more general result in \cite[Proposition~7.1]{BCCT10}).
Using Theorem~\ref{thm:prod} we also give a number of concrete examples, some known (e.g., linear algebra) and some new.

We use the following notation. 
$\ZZ^d$ will be our $d$--dimensional set including all possible tuples of loop indices $\indx=(i_1,\ldots,i_d)$. 
The homomorphisms $\phi_1,\ldots,\phi_m$ all choose subsets of these indices $S_1,\ldots,S_m$, i.e., each $S_j \subseteq \set{1,\ldots,d}$ indicates which indices $\phi_j$ chooses.
If an array reference uses multiple copies of a subscript, e.g., $A_3(i_1,i_1,i_2)$, then the corresponding $S_3 = \set{1,2}$ has just one copy of each subscript.
\begin{theorem} \label{thm:prod} 
Suppose each $\phi_j$ chooses a subset $S_j$ of the loop indices, as described above.
Let $H_i \subim \ZZ^d$ denote the subgroup $H_i = \langle e_i \rangle$, i.e., nonzero only in the $i^\text{th}$ coordinate, for $i \in \set{1,\ldots,d}$.
Then, for any $s_j \in [0,1]$,
\begin{equation} \label{subcriticalhypothesis-product}
\Rank{H_i} \le \sum_{j=1}^m s_j\Rank{\phi_j(H_i)} \qquad \text{for all } i \in \set{1,2,\ldots,d}
\end{equation} 
if and only if
\begin{equation} \tag{\ref{subcriticalhypothesis}}
\Rank{H} \le \sum_{j=1}^m s_j\Rank{\phi_j(H)} \qquad \text{for all subgroups } H \subim \ZZ^d .
\end{equation}
\end{theorem}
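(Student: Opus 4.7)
The plan is to handle the forward direction trivially and focus on the substantive reverse direction via a simple independent-set argument.

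The forward direction (\eqref{subcriticalhypothesis} implies \eqref{subcriticalhypothesis-product}) is immediate: the inequality for $H_i$ is just the specialization of the general inequality to $H = H_i$.

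For the reverse direction, I would first invoke Lemma~\ref{lem:PZequalsPQ} to reduce from subgroups of $\ZZ^d$ to subspaces of $\QQ^d$, since the associated polytopes coincide. Writing $\pi_S \from \QQ^d \to \QQ^S$ for coordinate projection onto $S \subseteq \set{1,\ldots,d}$, one checks that $\phi_j(e_i) \ne 0$ exactly when $i \in S_j$, so $\Rank{\phi_j(H_i)} = 1$ if $i \in S_j$ and $0$ otherwise. The hypothesis \eqref{subcriticalhypothesis-product} thus becomes $1 \le \sum_{j:\, i \in S_j} s_j$ for every $i \in \set{1,\ldots,d}$, and the goal reduces to showing $\Dim{V} \le \sum_j s_j \Dim{\pi_{S_j}(V)}$ for every subspace $V \subim \QQ^d$.

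The key step: given $V$ of dimension $r$, choose a basis for $V$ in reduced row echelon form and let $I \subseteq \set{1,\ldots,d}$ be its $r$ pivot columns. Then $\pi_I$ restricted to $V$ is a $\QQ$--linear isomorphism onto $\QQ^I$; hence $\pi_T(V) = \QQ^T$ for every $T \subseteq I$. Specializing to $T = S_j \cap I$ gives $\Dim{\pi_{S_j \cap I}(V)} = |S_j \cap I|$, and since $\pi_{S_j \cap I}$ factors through $\pi_{S_j}$ (via the further projection $\QQ^{S_j} \to \QQ^{S_j \cap I}$), we obtain $\Dim{\pi_{S_j}(V)} \ge |S_j \cap I|$. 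Substituting and interchanging the order of summation yields
\[
\sum_{j=1}^m s_j \Dim{\pi_{S_j}(V)} \;\ge\; \sum_{j=1}^m s_j |S_j \cap I| \;=\; \sum_{i \in I} \sum_{j:\, i \in S_j} s_j \;\ge\; |I| \;=\; r \;=\; \Dim{V},
\]
where the last inequality applies the coordinate-wise hypothesis once per $i \in I$.

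There is no serious obstacle here — the real content is recognizing that an independent set $I$ of coordinates for $V$ is the right bridge between the coordinate-wise hypothesis and the full subspace inequality. The product structure of the $\phi_j$ ensures that coordinate projections cannot lose more dimensions than there are coordinates outside the pivot set, which is exactly what the bound $\Dim{\pi_{S_j}(V)} \ge |S_j \cap I|$ encodes. The remainder is a routine interchange of summations.
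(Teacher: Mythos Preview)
Your proof is correct and follows essentially the same argument as the paper: both select a set of $r$ coordinates on which $H$ (or $V$) projects with full rank---your pivot set $I$ from the RREF basis is exactly the paper's set $\mathcal{R}$ of rows supporting a full-rank $h\times h$ submatrix of a generating matrix for $H$---then use $\Rank{\phi_j(H)} \ge |S_j \cap I|$ and interchange sums. The one inessential difference is that you route through Lemma~\ref{lem:PZequalsPQ} to work with $\QQ$--subspaces, whereas the paper stays with subgroups of $\ZZ^d$ and integer matrices directly; the paper's route is slightly more self-contained, but the content is identical.
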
 
\begin{proof}
  Necessity follows from the fact that the constraints \eqref{subcriticalhypothesis-product} are a subset of \eqref{subcriticalhypothesis}.

  To show sufficiency, we rewrite hypothesis \eqref{subcriticalhypothesis-product} as
\[
\Rank{H_i} = 1 \leq \sum_{j=1}^m s_j \delta(j,i) \qquad \text{for all } i \in \set{1,2,\ldots,d},
\]
where $\delta(j,i) = 1$ if $i \in S_j$, and $\delta(j,i) = 0$ otherwise.
Let $H \subim \ZZ^d$ have $\Rank{H}=h$.
Express $H$ as the set spanned by integer linear combinations of columns of some rank $h$ integer matrix $M_H$ of dimension $d$--by--$h$. 
Pick some full rank $h$--by--$h$ submatrix $M'$ of $M_H$. 
Suppose $M'$ lies in rows of $M_H$ in the set $\mathcal{R} \subseteq \set{1,\ldots,d}$; note that $|\mathcal{R}|=h$.
Then
\[
\Rank{\phi_j(H)} \ge \sum_{r \in \mathcal{R}} \delta(j,r), 
\]
so
\[
\sum_{j=1}^{m} s_j \Rank{\phi_j(H)}
\ge \sum_{j=1}^{m} s_j \sum_{r \in \mathcal{R}} \delta(j,r) 
= \sum_{r \in\mathcal{R}}  \sum_{j=1}^{m}  s_j \delta(j,r)
\ge \sum_{r \in\mathcal{R}}  \Rank{H_r}
= \sum_{r \in\mathcal{R}} 1 = |\mathcal{R}| = \Rank{H}. 
\]
\end{proof}
 \begin{remark} \label{rmk:prod-equiv}
We can generalize this to the case where the $\phi_j$ do not choose subsets $S_j$ of the indices $i_1,\ldots,i_d$, but rather subsets of suitable independent linear combinations of the indices, for example subsets of $\set{i_1,i_1+i_2,i_3-2i_1}$ instead of subsets of $\set{i_1,i_2,i_3}$. 
We will discuss recognizing the product case in more general array references in Part~2 of this paper.
\end{remark}

We now give a number of concrete examples. 
We introduce a common notation from linear programming that we will also use later. 
We start with the version of the hypothesis \eqref{subcriticalhypothesis-product} used in the proof, $1 \le \sum_{j=1}^m s_j \delta(j,i)$, and rewrite this as ${\bf 1}_d^T \le s^T \cdot \Delta$, where ${\bf 1}_d \in \RR^d$ is a column vector of all ones, $s = (s_1,\ldots,s_m)^T \in \RR^m$ is a column vector, and $\Delta \in \RR^{m \times d}$ is a matrix with $\Delta_{ji} = \delta(j,i)$. 
Recall that the communication lower bound in \sectn{\ref{sec:lb}} depends on $\sum_{j=1}^m s_j = {\bf 1}_m^T \cdot s$, and that the lower bound is tightest (maximal) when ${\bf 1}_m^T \cdot s$ is minimized. 
This leads to the linear program
\begin{equation} \label{LP1}
{\rm minimize} \;  {\bf 1}_m^T \cdot s \; {\rm subject\ to} \; 
{\bf 1}_d^T \le s^T \cdot \Delta, 
\end{equation}
where we refer to the optimal value of ${\bf 1}_m^T \cdot s$ as $\shbl$.
Note that \eqref{LP1} is not necessarily feasible, in which case $\shbl$ does not exist. 
This is good news, as the matrix powering example shows.
Since $\Delta$ has one row per $\phi_j$ (i.e., per array reference), and one column per loop index, we will correspondingly label the rows and columns of the $\Delta$ matrices below to make it easy to see how they arise.
These lower bounds are all attainable (see \sectn{\ref{sec:attain-prod}}).
\begin{exMatvec}[Part~2/3]
We continue the example from \sectn{\ref{sec:bound-injectivemap}}, with $\phi_y(i_1,i_2) = (i_1)$, 
$\phi_A(i_1,i_2) = (i_1,i_2)$, and $\phi_x(i_1,i_2) = (i_2)$, or
\[
\Delta = 
\bordermatrix{
  & i_1 & i_2  \cr
y & 1 & 0  \cr
A & 1 & 1  \cr
x & 0 & 1  
}, 
\]
and the linear program~\eqref{LP1} becomes minimizing $\shbl = s_y+s_A+s_x$ subject to $s_y+s_A \ge 1$ and $s_A+s_x \ge 1$.
The solution is $s_x=s_y=0$ and $s_A = 1 = \shbl$.
This yields a communication lower bound of $\Omega (N^2/M^{\shbl-1}) = \Omega(N^2)$.
This reproduces the well-known result that there is no opportunity to minimize communication in matrix-vector multiplication (or other so-called BLAS-2 operations) because each entry of $A$ is used only once.
As we will see in \sectn{\ref{sec:attain-injectivemap}}, this trivial lower bound (no data reuse) is easily attainable.
\exend
\end{exMatvec}
\begin{exNbody}[Part~1/3]
The (simplest) code that accumulates the force on each particle (body) due to all $N$ particles is
\begin{align*}
&\For{i_1}{1}{N} ,\quad \For{i_2}{1}{N} ,\\
&\qquad F(i_1) \plusequals \text{compute\_force}(P(i_1),P(i_2))
\end{align*}
We get $\phi_F(i_1,i_2) = (i_1)$, $\phi_{P_1}(i_1,i_2) = (i_1)$, and $\phi_{P_2}(i_1,i_2) = (i_2)$, or   
\[
\Delta = 
\bordermatrix{
   & i_1 & i_2  \cr
F  & 1 & 0  \cr
P_1 & 1 & 0  \cr
P_2 & 0 & 1  
}. 
\]
Now the solution of the linear program is not unique: $s_F = \alpha$, $s_{P_1} = 1-\alpha$ and $s_{P_2} = 1$ is a solution for any $0 \le \alpha \le 1$, all yielding $\shbl = 2$ and a communication lower bound of $\Omega(N^2/M)$.

Consider also the similar program
\begin{align*}
&\For{i_1}{1}{N_1} ,\quad \For{i_2}{1}{N_2} ,\\
&\qquad \If{\text{predicate}(R(i_1),S(i_2)) = \text{true}} ,\\
&\qquad\qquad \text{output}(i_1,i_2) = \text{func}(R(i_1),S(i_2))
\end{align*}
which represents a generic ``nested loop'' database join algorithm of the sets of tuples $R$ and $S$ with $|R|=N_1$ and $|S|=N_2$.
We have a data-dependent branch in the inner loop, so we split the iteration space $\iters \qec \iters_\text{true} + \iters_\text{false}$ depending on the value of the predicate (we still assume that the predicate is evaluated for every $(i_1,i_2)$).
This gives
 \[
 \Delta_\text{true} = 
\bordermatrix{
   & i_1 & i_2  \cr
R  & 1 & 0  \cr
S & 0 & 1 \cr
\text{output} & 1 & 1} 
\qquad\text{and}\qquad
\Delta_\text{false} = 
\bordermatrix{
   & i_1 & i_2  \cr
R  & 1 & 0  \cr
S & 0 & 1 },
\]
leading to $\shbla{\text{true}} = 1$ and $\shbla{\text{false}}=2$, respectively.
Writing $|\iters_\text{true}| = \alpha|\iters| = \alpha N_1N_2$, then we obtain lower bounds $\Omega(\alpha N_1 N_2)$ and $\Omega((1-\alpha)N_1N_2/M)$ for iterations $\iters_\text{true}$ and $\iters_\text{false}$, respectively.
%
So we can take the maximum of these two lower bounds to obtain a lower
bound for the whole program.
Altogether, this yields 
\[ 
\max\left(\Omega(\alpha N_1N_2), \Omega((1-\alpha)N_1N_2/M)\right) 
= \Omega(\max( \alpha N_1N_2, N_1N_2/M)),\]
an increasing function of $\alpha \in [0,1]$ (using the fact that $(M+1)/M = \Theta(1)$).
When $\alpha$ is close enough to zero, 
we have the `best' lower bound $\Omega(N_1N_2/M)$, 
and when $\alpha$ is close enough to 1, 
we have $\Omega(\alpha N_1N_2)$, the size of the output,
a lower bound for any computation.
\exend
\end{exNbody}
\begin{exMatmul}[Part~3/5]
We outlined this result already in part~2/5 of this example (see \sectn{\ref{sec:intro}}). 
%
We have $\phi_A(i_1,i_2,i_3)=(i_1,i_3)$, $\phi_B(i_1,i_2,i_3)=(i_3,i_2)$, and $\phi_C(i_1,i_2,i_3)=(i_1,i_2)$, or
\[
\Delta = 
\bordermatrix{
  & i_1 & i_2 & i_3  \cr
A & 1 & 0 & 1  \cr
B & 0 & 1 & 1  \cr
C & 1 & 1 & 0
}. 
\]
One may confirm that the corresponding linear program has solution $s_A = s_B = s_C = 1/2$ so $\shbl = 3/2$, yielding the well known communication lower bound of $\Omega(N^3/M^{1/2})$. 
Recall from \sectn{\ref{sec:model}} that the problem of matrix multiplication was previously analyzed in \cite{ITT04} using the Loomis-Whitney inequality (Theorem~\ref{thm_LoomisWhitney}, a special case of Theorem~\ref{thm:1}) and extended to many other linear algebra algorithms in \cite{BallardDemmelHoltzSchwartz11}.
\exend
\end{exMatmul}
\begin{exTensor}[Part~1/2]
Assuming $1 \le j < k-1 < d$, the code is 
\begin{align*}
 &\For{i_1}{1}{N} ,\quad \ldots,\quad \For{i_d}{1}{N} ,\\
 &\qquad C(i_1,\ldots,i_j,i_k,\ldots,i_d) \plusequals A(i_1,\ldots,i_{k-1}) \cdot B(i_{j+1},\ldots,i_d)
\end{align*}
We get
\[
\Delta = 
\bordermatrix{
  & i_1 & \cdots & i_j & i_{j+1} & \cdots & i_{k-1} & i_k & \cdots & i_d  \cr
A & 1   & \cdots &  1  &  1      & \cdots &   1     &  0  & \cdots &  0   \cr 
B & 0   & \cdots &  0  &  1      & \cdots &   1     &  1  & \cdots &  1   \cr 
C & 1   & \cdots &  1  &  0      & \cdots &   0     &  1  & \cdots &  1   
}.
\]
The linear program is identical to the linear program for matrix multiplication, so $\shbl = 3/2$, and communication lower bound is $\Omega(N^d/M^{1/2})$.  
\exend
\end{exTensor}
\begin{exComplicated}[Part~2/4]
We already outlined this example in part~1/4 (see \sectn{\ref{sec:intro}}).
%
%
The code given in part~1/4 leads to
\[
\Delta = 
\bordermatrix{
  & i_1 & i_2 & i_3 & i_4 & i_5 & i_6 \cr
A_1 & 1 & 0 & 1 & 0 & 0 & 1 \cr
A_2 & 1 & 1 & 0 & 1 & 0 & 0 \cr
A_{3,1} & 0 & 1 & 1 & 0 & 1 & 0 \cr
A_4,A_{3,2} & 0 & 0 & 1 & 1 & 0 & 1 \cr
A_5 & 0 & 1 & 0 & 0 & 0 & 1 \cr
A_6 & 1 & 0 & 0 & 1 & 1 & 0},
\]
since $A_3$ is accessed twice, once with the same subscript as $A_4$, so we have ignored the duplicate subscript following Remark~\ref{rmk:duplicatemaps}.
We obtain $s=(2/7,1/7,3/7,2/7,3/7,4/7)^T$ as a solution, giving $\shbl = 15/7$, so the communication lower bound is $\Omega(N^6/M^{8/7})$.
\exend
\end{exComplicated}
%
%
%
%
\begin{exAk}[Part~3/4]
We continue this example from part~2/4 (above), with $\phi_A$, $\phi_B$ and $\phi_C$ as given there.
%
So we have
\[
\Delta = 
\bordermatrix{
  & i_1 & i_2 & i_3 & i_4  \cr
A & 0   & 1   & 0   & 1    \cr
B & 0   & 0   & 1   & 1    \cr
C & 0   & 1   & 1   & 0
} 
\]
The first column of $\Delta$ gives us the infeasible constraint 
$0 \cdot s_A + 0 \cdot s_B + 0 \cdot s_C \geq 1$.
%
As mentioned above, we can reorganize the loops so that we can increase $k$ arbitrarily without communication.
The issue is that the intermediate powers of $A$ do not occupy unique memory addresses, but rather overwrite the previous power (stored in the buffers $B$ and $C$). 
Following the strategy of imposing Reads and Writes, we apply Theorem~\ref{thm:prod} to the second code fragment in part~1/4 of this example (see \sectn{\ref{sec:model}}), yielding
\[
\Delta = 
\bordermatrix{
          & i_1 & i_2 & i_3 & i_4  \cr
\hat{A}_1 & 1   & 1   & 1   & 0  \cr
\hat{A}_2 & 0   & 1   & 0   & 1  \cr
\hat{A}_3 & 1   & 0   & 1   & 1
} 
\]
where the subscripts on $\hat{A}$ distinguish its three appearances.
The resulting linear program is feasible with $s_1 = s_2 = s_3 = 1/2$ and $\shbl = 3/2$, yielding a communication lower bound of $c k N^3/M^{1/2}$ for some constant $c>0$.
As mentioned in part~1/4, the number of imposed Reads and Writes is at most $2(k-2)N^2$, so subtracting we get a communication lower bound for the original algorithm of $ckN^3/M^{1/2} - 2(k-2)N^2$.
When $N =\omega(M^{1/2})$, the first dominates, and we see the complexity is the same as $k$ independent matrix multiplications.
When $N = O(M^{1/2})$, the second term may dominate, leading to a zero lower bound, which reflects the fact that one can read the entire matrix $A$ into fast memory, perform the entire algorithm while computing and discarding intermediate powers of $A$, and only write out the final result $A^k$.
In other words, any communication lower bound that is proportional to the number of loop iterations $\niters = (k-1)N^3$ must be zero.
\exend
\end{exAk}

\section{Communication-optimal algorithms} \label{sec:attain}
Our ultimate goal is to take a simple expression of an algorithm,
say by nested loops, and a target architecture, and automatically 
generate a communication optimal version, 
i.e., that attains the lower bound of Theorem~\ref{Thm4.1}, 
assuming the dependencies between loop iterations permit.
In this section we present some special cases where this
is possible, corresponding to the cases in 
\sectn{\ref{sec:bound}} where we could effectively compute the 
linear constraints \eqref{subcriticalhypothesis}.
%
%

The rest of this section is organized as follows. 
\sectn{\ref{sec:attain-trivial}} briefly addresses the infeasible and injective cases described in \sectn{\ref{sec6.2}}. 
\Sectn{\ref{sec:attain-prod}} presents communication-optimal sequential algorithms for the product case whose lower bound was presented in \sectn{\ref{sec:bound-prod}}. 
\Sectn{7.3} presents communication-optimal parallel algorithms for the same product case, including generalizations of recent algorithms like 2.5D matrix multiplication \cite{2.5D_EuroPar}, which use additional memory (larger $M$) to replicate data and reduce communication.
In Part~2 of this paper (outlined in \sectn{\ref{sec:conclusions}}) we will discuss attainability in general.

\subsection{Trivial Cases} \label{sec:attain-trivial}

\subsubsection{Infeasible Case} \label{sec:attain-nontrivialkernel}
As shown in Lemma~\ref{lem:nonemptyP}, 
if $H=\bigcap_{j=1}^m \Kernel{\phi_j}$ is nontrivial, then $\shbl$ doesn't exist.
That is, since $H$ is infinite, we can potentially perform an unbounded number 
of iterations $\indx \in H$ with the same $m$ operands 
(which fit in cache, since $m = o(M)$ by assumption 
--- see \sectn{\ref{sec:lb}}).

Recall the matrix powering example from the previous section (parts~2/4 and~3/4).
In part~2/4, we saw a lower bound of $0$ words moved was possible, provided the matrix dimension $N$ was bounded in terms of $M^{1/2}$.
We then showed in part~3/4 how to obtain a lower bound on the modified code with imposed Reads and Writes (from part~1/4, in \sectn{\ref{sec:lb}}) which is sometimes tighter, depending on the relative sizes of $N$ and $M$.
In part~4/4 below, we show that the tighter bound is also attainable, by reorganizing the modified code.

\subsubsection{Injective Case} \label{sec:attain-injectivemap}
As shown in Lemma~\ref{lem:injectivemap}, if there is an injective $\phi_j$, we have $\shbl=1$.
Thus no data reuse is possible asymptotically.
So, executing the loop nest in any order (of the iteration space) will attain the lower bound.
We revisit the matrix-vector multiplication example from the previous section (parts~1/3 and~2/3) in part~3/3 below.

\subsection{Product Case --- Sequential Algorithms} \label{sec:attain-prod}
Here we show how to 
find an optimal sequential algorithm in the special case discussed 
in Theorem~\ref{thm:prod}, where the subscripts $\phi_j$ 
all choose subsets $S_j$ of the loop indices $i_1,\ldots,i_d$.
Recalling the notation from \sectn{\ref{sec:bound-prod}}, \eqref{LP1} provided a linear program for $s = (s_1,\ldots,s_m)^T$ (that we call ``$\text{sLP}$'' here),
\[
  (\text{sLP}) \quad  \text{minimize } {\bf 1}_m^T \cdot s \text{ subject to } {\bf 1}_d^T \le s^T \cdot \Delta ,
\]
where $\Delta \in \RR^{m \times d}$ was a matrix with $\Delta_{ji} = 1$ if $i \in S_j$ and $0$ otherwise.
We let $\shbl$ denote the minimum value of ${\bf 1}_m^T \cdot s$.

For notational simplicity, we consider a computer program of the form
\begin{align*}
&\For{i_1}{1}{N} ,\quad \For{i_2}{1}{N} ,\quad \ldots,\quad \For{i_d}{1}{N} ,\\
&\qquad\text{inner\_loop}(i_1,i_2,\ldots,i_d)
\end{align*}
and ask how to reorganize it to attain the communication lower bound of $\Omega(N^d/M^{\shbl-1})$ in a sequential implementation.  
As explained in \sectn{\ref{sec:lb}}, explicit loop nests are not necessary, but it is easier to explain the result.

We consider a reorganization of the following ``blocked'' or ``tiled'' form:
\begin{align*}
&\ForStep{j_1}{1}{M^{x_1}}{N} ,\quad \ForStep{j_2}{1}{M^{x_2}}{N} ,\quad \ldots,\quad \ForStep{j_d}{1}{M^{x_d}}{N} ,\\
&\qquad\For{k_1}{0}{M^{x_1}-1} ,\quad \For{k_2}{0}{M^{x_2}-1} ,\quad \ldots,\quad \For{k_d}{0}{M^{x_d}-1} ,\\
&\qquad\qquad(i_1,i_2,\ldots,i_d) = (j_1,j_2,\ldots,j_d) + (k_1,k_2,\ldots,k_d) \\
&\qquad\qquad\text{inner\_loop}(i_1,i_2,\ldots,i_d)
\end{align*}
Here $M$ is the fast memory size, and $x_1,\ldots,x_d$ are parameters to be determined below. 

Here we show that simply solving the dual linear program of ($\text{sLP}$) provides the values of $x_1,\ldots,x_d$ that we need for 
the tiled code to be optimal, i.e., to attain the communication lower bound:
\begin{theorem} \label{thm6.1}
  Suppose that the linear program ($\text{sLP}$) for $s$ is feasible. 
  Then the dual linear program ``$\text{xLP}$'' for $x = (x_1,\ldots,x_d)^T$
\[
  (\text{xLP})\quad\text{maximize } {\bf 1}_d^T \cdot x \text{ subject to } {\bf 1}_m \ge \Delta \cdot x,
\]
is also feasible, and using these values of $x$ in the tiled code lets it attain the communication lower bound of $\Omega(N^d/M^{\shbl-1})$ words moved.
\end{theorem}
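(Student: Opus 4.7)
The plan is to split the proof into two parts: feasibility of $(\text{xLP})$ via linear programming duality, and attainment via a direct analysis of the tiled code.

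First, I would argue feasibility as follows. By Proposition~\ref{prop:maxexponent}, restricting to $s \in [0,1]^m$ in $(\text{sLP})$ is equivalent to dropping the upper bounds and optimizing over $s \ge 0$ with $s^T\Delta \ge \mathbf{1}_d^T$. This is a standard LP in canonical form whose dual is exactly $(\text{xLP})$: maximize $\mathbf{1}_d^T \cdot x$ subject to $\Delta \cdot x \le \mathbf{1}_m$ and $x \ge 0$. Since $(\text{sLP})$ is feasible by hypothesis and its objective is bounded below by $0$, strong LP duality guarantees that $(\text{xLP})$ is feasible, attains its maximum, and that maximum equals $\shbl$.

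Next, fix an optimal $x$ of $(\text{xLP})$ and analyze the tiled code. Each inner tile is parametrized by $(j_1,\dots,j_d)$ and ranges over $(k_1,\dots,k_d) \in \prod_{i=1}^d \{0,\dots,M^{x_i}-1\}$, so it consists of $\prod_i M^{x_i} = M^{\sum_i x_i} = M^{\shbl}$ iterations. For each array $A_j$, the operand $A_j(\phi_j(\mathbf{i}))$ depends only on the indices $k_i$ with $i \in S_j$, so the number of distinct array entries of $A_j$ touched during one inner tile is at most $\prod_{i \in S_j} M^{x_i} = M^{\sum_{i \in S_j} x_i} = M^{(\Delta x)_j} \le M^1 = M,$ where the inequality uses the dual constraint $\Delta x \le \mathbf{1}_m$. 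Summing over the $m$ arrays (a constant), the working set of one tile has size at most $mM = O(M)$ words. After rescaling each $M^{x_i}$ by a constant factor to ensure this working set actually fits in $M$ words (which does not affect the asymptotics), one tile can be executed by reading all its operands into fast memory, computing, and writing back, at a cost of $O(M)$ Reads/Writes.

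Finally, I would count the tiles. There are $\prod_i \lceil N/M^{x_i} \rceil = O(N^d / M^{\sum_i x_i}) = O(N^d / M^{\shbl})$ tiles, each contributing $O(M)$ words of communication, for a total of $O((N^d / M^{\shbl}) \cdot M) = O(N^d / M^{\shbl-1})$ words moved. This matches the lower bound in Theorem~\ref{Thm4.1}, so the tiled code is communication-optimal. I do not foresee a genuine obstacle: LP duality handles the feasibility half cleanly, and the attainment half is a direct counting argument once one observes that the dual constraints $\Delta x \le \mathbf{1}_m$ are exactly what is needed to make each array's footprint per tile fit in $M$. The only bookkeeping issue is the constant-factor shrinkage of tile sizes and the handling of $N$ not divisible by $M^{x_i}$, both of which are absorbed into the $O(\cdot)$ notation.
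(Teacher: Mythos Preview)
Your proposal is correct and follows essentially the same approach as the paper: invoke LP duality to get a feasible $x$ with $\mathbf{1}_d^T x = \shbl$, then observe that the dual constraints $\Delta x \le \mathbf{1}_m$ force each array's per-tile footprint to be at most $M$, so each of the $O(N^d/M^{\shbl})$ tiles costs $O(M)$ words. You are slightly more explicit than the paper in justifying why the $s_j \le 1$ bounds can be dropped before dualizing (via Proposition~\ref{prop:maxexponent}) and in spelling out the tile count and constant-factor adjustments, but the core argument is identical.
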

\begin{proof}
By duality the solution $x$ of ($\text{xLP}$) satisfies ${\bf 1}_d^T \cdot x = {\bf 1}_m^T \cdot s = \shbl$.
Let $E = \set{(i_1,\ldots,i_d)} \subset \ZZ^d$ be the set of lattice points traversed by the $d$ innermost loops of the tiled code, i.e., the loops over $k_1,\ldots,k_d$, for one value of $(j_1,\ldots,j_d)$.
It is easy to see that the size of $E$ is $|E| = \prod_{i=1}^d M^{x_i} = M^{\sum_{i=1}^d x_i} = M^{\shbl}$.
Furthermore, the constraint ${\bf 1}_m \geq \Delta \cdot x$ means that for each $j$, $1 \ge \sum_{i=1}^d \Delta_{ji} x_i = \sum_{i \in S_j} x_i$, with equality for at least one $j$.
Thus $|\phi_j (E)| = \prod_{i \in S_j} M^{x_i} = M^{\sum_{i \in S_j} x_i} \le M^1 = M$.
In other words, the number of words accessed by the $j^\text{th}$ array in the innermost $d$ loops is at most $M$, with equality at least once.
Thus the inner $d$ loops of the algorithm perform the maximum possible $M^{\shbl}$ iterations while accessing $\Theta(M)$ words of data, so the algorithm is optimal.
\end{proof}
\noindent We note that in practice, we may want to choose the block sizes $M^{x_i}$ to be a constant factor smaller, in order for all the data accessed in the innermost $d$ loops to fit in fast memory memory simultaneously.
We will look at these constants in Part~2 of this work.

We look again at the examples from \sectn{\ref{sec:bound-prod}}, using the notation introduced there.
\begin{exMatvec}[Part~3/3]
We write $x = (x_1,x_2)^T$ as the unknown in the dual linear program ($\text{xLP}$), so that $M^{x_1}$ is the tile size for variable $i_1$ and $M^{x_2}$ is the tile size for variable $i_2$.
Then ($\text{xLP}$) becomes maximizing $\shbl = x_1 + x_2$ subject to $x_1 \le 1$, $x_1+x_2 \le 1$, and $x_2 \le  1$.
The solution is $x_1 = \alpha$ and $x_2 = 1 - \alpha$ for any $0 \le \alpha \le 1$, and $\shbl = 1$. 
In other words, we can ``tile'' the matrix $A$ into any $M^{\alpha}$--by--$M^{1-\alpha}$ rectangular blocks of total size $M$.
Indeed, such flexibility is common when one of the subscripts is injective (see \sectn{\ref{sec:attain-injectivemap}}, above).

We can use our approach to further reduce communication,
potentially up to a factor of $2$, by choosing the optimal $\alpha$.
We simply ignore the array $A$, and apply the theory just to the
vectors $x$ and $y$, leading to the same analysis as the $N$--body problem,
which tells use to use blocks of size $M$ for both $x$ and $y$ (see next example).
We still read each entry of $A$ once (moving $N^2$ words), but reuse each
entry of $x$ and $y$ $M$ times.
\exend
\end{exMatvec}
\begin{exNbody}[Part~2/3]
We write $x = (x_1,x_2)^T$ as the unknown in the dual linear program ($\text{xLP}$), so that $M^{x_1}$ is the tile size for variable $i_1$ and $M^{x_2}$ is the tile size for variable $i_2$.
Then ($\text{xLP}$) becomes maximizing $\shbl = x_1 + x_2$ subject to $x_1 \le 1$ and $x_2 \le 1$. 
The solution is $x_1 = x_2 = 1$ and $\shbl = 2$, which tells us to to take $M$ particles indexed as $P(i_1)$, $M$ (usually different) particles indexed as $P(i_2)$, and compute all $M^2$ pairs of interactions.

For the database join example, we use the optimal blocking for the
iterations $\iters_\text{false}$, which also yields $x_1=x_2=1$, and
so compute ${\rm predicate}(R(i_1),S(i_2))$ $M^2$ times using
$M$ values each of $R_{i_1}$ and $S_{i_2}$. Whenever the predicate
is true, ${\rm output}(i_1,i_2)$ is immediately written to slow memory.
Thus there are $N_1N_2/M$ reads of $R(i_1)$ and $S(i_2)$,
and $|\iters_\text{true}| = \alpha N_1 N_2$ writes of 
${\rm output}(i_1,i_2)$, which attains the lower bound.
%
%
\exend

\end{exNbody}
\begin{exMatmul}[Part~4/5]
We write $x = (x_1,x_2,x_3)^T$ as the unknown in the dual linear program ($\text{xLP}$), so that $M^{x_1}$ is the tile size for variable $i_1$, $M^{x_2}$ is the tile size for variable $i_2$, and $M^{x_3}$ is the tile size for variable $i_3$.
Then ($\text{xLP}$) becomes maximizing $\shbl = x_1 + x_2 + x_3$ subject to $x_1+x_3 \le 1$, $x_1+x_2 \le 1$ and $x_2+x_3 \le 1$.
The solution is $x_1 = x_2 = x_3 = 1/2$ and $\shbl = 3/2$, which tells us the well-known result that each matrix $A$, $B$, and $C$ should be tiled into square blocks of size $M^{1/2}$--by--$M^{1/2}$.
When $M$ is the cache size on a sequential machine, this is a well-known algorithm, with a communication cost of $\Theta(N^3/M^{1/2})$ words moved between cache and main (slower) memory.
\exend
\end{exMatmul}
\begin{exTensor}[Part~2/2]
We write $x=(x_1,\ldots,x_d)^T$ as the unknown in the dual linear program ($\text{xLP}$), so that $M^{x_r}$ is the tile size for variable $i_r$.
Then ($\text{xLP}$) becomes maximizing $\shbl = \sum_{r=1}^d x_r$ subject to 
\[
  \sum_{r=1}^{k-1} x_r \le 1, \quad \sum_{r=j+1}^d x_r \le 1,\quad \text{and}\quad \sum_{r=1}^j x_r + \sum_{r=k}^d x_r \le 1.
\]
The solution is any set of values of $0 \le x_r \le 1$ satisfying 
\[
  \sum_{r=1}^{j} x_r = 1/2,\quad \sum_{r=j+1}^{k-1} x_r = 1/2,\quad\text{and}\quad \sum_{r=k}^d x_r = 1/2,
\]
so $\shbl = 3/2$.
For example, one could use
\[
  x_1 = \cdots = x_j = \frac{1}{2j},\quad x_{j+1} = \cdots = x_{k-1} = \frac{1}{2(k-j-1)},\quad\text{and}\quad x_k = \cdots = x_d = \frac{1}{2(d-k+1)}
\]
which is a natural generalization of matrix multiplication.
\exend
\end{exTensor}
\begin{exComplicated}[Part~3/4]
We write $x = (x_1,\ldots,x_6)^T$ as the unknown in the dual linear program ($\text{xLP}$), so that $M^{x_r}$ is the tile size for variable $i_r$.
The solution is \[ x=(2/7,3/7,1/7,2/7,3/7,4/7)^T \] (and $\shbl = 15/7$), leading to the block sizes $M^{2/7}$--by--$\cdots$--by--$M^{4/7}$. 
\exend
\end{exComplicated}
%
%
%
\begin{exAk}[Part~4/4]
While ($\text{sLP}$) is infeasible, ($\text{xLP}$) has unbounded solutions.
As suggested above, the trivial lower bound of $0$ words moved may not always be attainable.
After imposing Reads and Writes, we saw in part~3/4 (see previous section) that when $N$ is sufficiently large compared to $M^{1/2}$, the communication lower bound (for the modified program) looks like $k$ independent matrix multiplications, and when $N = O(M^{1/2})$, the lower bound degenerates to $0$.
To attain the lower bound for the modified code with imposed Reads and Writes, we implement the matrix multiplication 
$\hat{A}(i_1,\cdot,\cdot) = \hat{A}(1,\cdot,\cdot) \cdot \hat{A}(i_1-1,\cdot,\cdot)$ 
in the inner loop in the optimized way as described in part~4/4 of the matrix multiplication example (above), and leave the outer loop (over $i_1$) unchanged.
\exend
\end{exAk}

\subsection{Product Case --- Parallel Algorithms}
\label{sec:attain-prod-par}
\label{sec7_3}

We again consider the special case discussed in Theorem~\ref{thm:prod},
and show how to attain a communication optimal parallel algorithm.
We again start with the unblocked code of the last section,
and show how to map it onto $P$ processors so that it is both
{\em load balanced}, i.e., every processor executes
$\text{inner\_loop}(i_1,i_2,\ldots,i_d)$
$O(N^d/P)$ times, and every processor attains the communication 
lower bound $\Omega ((N^d/P)/M^{\shbl-1})$, i.e., communicates at most
this many words to or from other processors. We need some assumption
about load balance, since otherwise one processor could 
store all the data and do all the work with no communication.
In contrast to the sequential case, $M$ is no longer fixed by the hardware
(e.g., to be the cache size), but can vary from the
minimum needed to store all the data in the arrays accessed by the algorithm
to larger values. Since the lower bound is a decreasing function of $M$,
we should ideally have a set of algorithms that communicate less as $M$
grows (up to some limit). There is in fact a literature
on algorithms for linear algebra 
\cite{2.5D_EuroPar,SolomonikDemmel11,Tiskin07,McCollTiskin99}
and the $N$--body problem 
\cite{DGKSY12}
that do attain the lower bound as $M$ grows; here we will see that this is
possible in general (again, loop dependencies permitting).

In \sectn{\ref{sec:intro}}, we said that we would only count the number of words
moved between processors, not other costs like message latency, network
congestion, or costs associated with noncontiguous data.
We will also ignore dependencies.
This will greatly simplify our presentation, but leave significant work
to design more practical parallel algorithms; we leave this until Part~2
of this paper.

Assume for a moment that we have chosen $M$; we distribute the work
among the processors as follows: Supposing that the linear program
($\text{sLP}$) is feasible,
let $x=(x_1,\ldots,x_d)^T$ be the solution of the dual linear program
($\text{xLP}$) defined in Theorem~\ref{thm6.1}.
Just as in the sequential case, we take the lattice of $N^d$ points
representing inner loop iterations,
indexed from $\indx = (i_1,i_2,\ldots,i_d)=(0,0,\ldots,0)$ to $(N-1,N-1,\ldots,N-1)$,
and partition the axis indexed by $i_j$ into $N/M^{x_j}$ contiguous segments
of equal size $M^{x_j}$. (As before, for simplicity we assume all
fractions like $N/M^{x_j}$ divide evenly.)
This in turn divides the entire lattice into 
$\prod_{i=1}^d N/M^{x_i} = N^d/M^\shbl$ ``bricks''
of $M^{\shbl}$ lattice points each.
Let $E$ refer to the lattice points in one brick.
Just as in the proof of Theorem~\ref{thm6.1}, the amount of data
from array $A_j$ needed to execute the inner loop iterations
indexed by $E$ is bounded by $|\phi_j(E)| \le M$.
So, up to a constant multiple $m$ (which we ignore),
each processor has enough memory to store the data needed
to execute a brick. This yields one constraint from our
approach: to be load balanced, there need to be at least
$P$ bricks, i.e., $N^d/M^{\shbl} \ge P$, or
$M \le N^{d/\shbl}/P^{1/\shbl}$.
In other words, we should pick $M$ no larger than this upper
bound in order to use all available processors.
(Of course $M$ also cannot exceed the memory available on each processor.)

The next constraint arises from having enough memory
in all the processors to hold all the data. Recalling
the notation used to define ($\text{sLP}$), we let
$S_j$ be the subset of loop indices appearing in $\phi_j$, and we have $|S_j|=d_j=\Rank{\phi_j(\ZZ^d)}$.
This means that the total storage required by array $A_j$
is $N^{d_j}$, and the total storage required by
all arrays is $\sum_{j=1}^m N^{d_j}$, yielding the lower
bound $M \ge \sum_{j=1}^m N^{d_j} / P$, and so altogether
\begin{equation}\label{eqn_ParConstraints}
\frac{N^{d/\shbl}}{P^{1/\shbl-1}} 
\ge M
\ge \frac{\sum_{j=1}^m N^{d_j}}{P}
\ge \frac{N^{d_\text{max}}}{P}, 
\end{equation}
where $d_\text{max} \ceq \max_{j=1}^m d_j$.
Plugging these bounds on $M$ into the communication lower bound yields
\begin{equation}\label{eqn_ParComm}
\frac{N^{d/\shbl}}{P^{1/\shbl}} 
\le \frac{N^d/P}{M^{\shbl-1}} 
\le \frac{N^{d-d_\text{max}(\shbl-1)}}{P^{2-\shbl}} .
\end{equation}
The expression on the left is simply the memory-independent lower bound $\Omega((\niters/P)^{1/\shbl})$ discussed in \sectn{\ref{sec:gen-machine}}.
Note that when $M$ equals its upper bound, it also equals the communication lower bound, which means that the content of the memory only needs to be communicated once; this is related to the fact that the assumption $\niters/P = \omega(F)$ fails to hold (see \sectn{\ref{sec:gen-machine}}).

For example, for matrix multiplication ($d=3$, $\shbl=3/2$, and $d_\text{max} =2$) 
we get the familiar constraints 
$N^2/P^{2/3}
\ge M \ge 
N^2/P$
and $N^2/P^{2/3} \le (N^3/P)/M^{1/2} \le N^2/P^{1/2}$
\cite{BDHLS12,AggarwalChandraSnir90,ITT04,2.5D_EuroPar}.

For $M$ in this range, we may express our parallel algorithm
most simply as follows:
\begin{align*}
&\text{while there are unexecuted bricks} \\
&\qquad\text{assign $P$ unexecuted bricks to $P$ processors} \\
&\qquad\text{in parallel, communicate the needed data to each processor} \\
&\qquad\text{in parallel, execute each brick}
\end{align*}

The algorithm is clearly load balanced (and oblivious to any
loop dependencies). It remains to compute the communication
complexity: this is simply the number of times each
processor executes a brick, times the communication required per brick.
Since there are $N^d/M^{\shbl}$ bricks shared evenly by $P$
processors, with $O(M)$ communication per brick, the result
is $O((N^d/P)/M^{\shbl-1})$, i.e., the lower bound is attained.
More formally, we could use an abstract model of computation like
LPRAM \cite{AggarwalChandraSnir90} or
BSPRAM \cite{Tiskin98,McCollTiskin99}
to describe this algorithm.
%

Instead, by making more assumptions (satisfied by some of our running examples)
we may also describe the algorithms in more detail as to how to map them
to an actual machine (we will weaken these assumptions in Part~2).
We will assume that $\Delta \cdot x = {\bf 1}_m$, not just $\Delta \cdot x \le {\bf 1}_m$ as required in (xLP).
By choosing all $x_i = 1/d_\text{max}$, which satisfies $\Delta \cdot x \leq {\bf 1}_m$, we see that $\shbl \geq {\bf 1}_d^T \cdot x = d/d_\text{max}$.
We will in fact assume $\shbl = d/d_\text{max}$.
Finally, we will assume for convenience that certain intermediate expressions, like $P^{\shbl}$, are integers.
We will write $M = CN^{d_\text{max}}/P$, where $C$ is the number of copies of the data we will use; $C=1$ corresponds to the minimum memory necessary.  
Combining with the upper bound on $M$ from \eqref{eqn_ParConstraints}, we get $N^{d/\shbl}/P^{1/\shbl} \ge C N^{d_\text{max}}/P$, or $C \le P^{1 - 1/\shbl}$.

Next, we divide the $P$ processors into $C$ groups of $P/C$ processors each. 
The total number of bricks is $\prod_{i=1}^d N/M^{x_i} = N^d/M^{\shbl} = N^d/(CN^{d_\text{max}}/P)^{\shbl} = (P/C)^{\shbl}$. 
If we used just $P/C$ processors to process all these bricks ($P/C$ at a time in parallel), then it would take $(P/C)^{\shbl-1}$ parallel phases. 
But since there are $C$ groups of processors, only $P^{\shbl-1}/C^{\shbl}$ phases are needed.

Now we describe how the data is initially laid out in each group of $P/C$ processors. 
Since we assume there is enough memory $P\cdot M$ for $C$ copies of the data, each group of $P/C$ processors will have its own copy.
For each array $A_\mu$, $1 \le \mu \le m$, we index the processors by $(j_k \in \NN : 0 \le j_k < N/M^{x_k})_{k  \in S_\mu}$, so the number of processors indexed is $\prod_{k \in S_\mu} N/M^{x_k} = N^{d_\mu}/M^{\sum_{k \in S_\mu} x_k} = N^{d_\mu}/M$, since we have assumed $\Delta \cdot x = {\bf 1}_m$.
Finally, $N^{d_\mu}/M \le N^{d_\text{max}}/M = P/C$, so we index all the processors in the group when $d_\mu = d_\text{max}$.
(When $d_\mu < d_\text{max}$, i.e., the array $A_\mu$ has fewer than the maximum number of dimensions, then it is smaller and so needs fewer processors to store.)
Since $A_\mu$ has subscripts given by the indices in $S_\mu$, we may partition $A_\mu$ by blocking the subscripts $i_k$, for $k \in S_\mu$, into blocks of size $M^{x_k}$, so that each block contains $\prod_{k \in S_\mu} M^{x_k} = M$ entries, and so may be stored on a single processor. 
Then block $(j_k)_{k \in S_\mu}$ is stored on the processor with the same indices, for $0 \le j_k < N/M^{x_k}$. This indexing will make it easy for a processor to find the data it needs, given the brick it needs to execute.

Next we describe how to group bricks (of the iteration space), indexed by ${\cal J} = (j_k \in \NN : 0 \le j_k < N/M^{x_k})_{k=1}^d$, into groups of $P/C$ to be done in each phase. 
We define a ``superbrick'' to consist of $(N/M^{x_k})^{1/\shbl}$ contiguous bricks in each direction, containing a total of $\prod_{k=1}^d (N/M^{x_k})^{1/\shbl} = P/C$ bricks.
Thus one superbrick can be executed in one parallel phase by one group of $P/C$ processors, and $C$ superbricks can be executed in one parallel phase by all $P$.
Given the indexing described in the last paragraph, the index $\cal J$ of each brick in a superbrick indicates exactly which processor in a group owns the data necessary to execute it.

We still need to assign superbricks to groups of processors.
There are many ways to do this.
We consider two extreme cases, $C=1$ and $C=P^{1 - 1/\shbl}$.

When $C=1$, there is just one group of processors, and there are $P^{\shbl-1}$ superbricks, each containing $P$ bricks, with each brick assigned to a processor using the index $\cal J$.
Altogether there are $P^{\shbl-1}$ parallel phases to complete the execution.
We emphasize that this is just one way of many to accomplish parallelization; 
see the examples below. For example, for matrix multiplication,
this corresponds to the classical so-called `2D' algorithms, 
where the $P$ processors are laid out in a $P^{1/2}$--by--$P^{1/2}$ grid.

When $C$ equals its maximum value $C=P^{1 - 1/\shbl}$, then there are just $C$ superbricks and the number of parallel phases is $P^{\shbl-1}/C^{\shbl} = 1$.
For example, for matrix multiplication 
this corresponds to a so-called `3D' algorithm,
where the $P$ processors are laid out in a 
$P^{1/3}$--by--$P^{1/3}$--by--$P^{1/3}$ grid (see \cite{AggarwalChandraSnir90}).
For the $N$--body problem, this corresponds to a `2D' algorithm (see \cite{DGKSY12}).
%

Only two of our running examples satisfy the assumptions in this special case.
As we mentioned above, we will weaken these assumptions in Part~2 to address the general `product case,' as described in \sectns{\ref{sec:bound-prod}~and~\ref{sec:attain-prod}}.
\begin{exNbody}[Part~3/3]
Our assumptions in the preceding paragraphs apply to the $N$--body example, with $x_1=x_2=1$ and $\shbl = 2 = d / d_\text{max}$.
(We will not discuss the database join example here because it does not satisfy the assumptions above.)
\exend
\end{exNbody}
\begin{exMatmul}[Part~5/5]
Here our assumptions in the preceding paragraphs apply, with $x_1=x_2=x_3=1/2$ and $\shbl = 3/2 = d / d_\text{max}$.
\exend
\end{exMatmul}
\begin{exComplicated}[Part~4/4]
This example does not satisfy the assumptions of the preceding paragraphs, so we will instead consider the similar-looking code sample
\begin{align*}
&\For{i_1}{1}{N} ,\quad \For{i_2}{1}{N} ,\quad \For{i_3}{1}{N} ,\quad \For{i_4}{1}{N} ,\quad \For{i_5}{1}{N} ,\quad \For{i_6}{1}{N} ,\\
&\qquad A_1(i_1,i_3,i_6) \plusequals \text{func}_1(A_2(i_1,i_2,i_4), A_3(i_2,i_3,i_5), A_4(i_3,i_4,i_6)) \\
&\qquad A_5(i_2,i_5,i_6) \plusequals \text{func}_2(A_6(i_1,i_4,i_5), A_3(i_3,i_4,i_6))
\end{align*}
where we have added the additional subscript $i_5$ to the access of $A_5$, leading to
\[
\Delta = 
\bordermatrix{
  & i_1 & i_2 & i_3 & i_4 & i_5 & i_6 \cr
A_1 & 1 & 0 & 1 & 0 & 0 & 1 \cr
A_2 & 1 & 1 & 0 & 1 & 0 & 0 \cr
A_{3,1} & 0 & 1 & 1 & 0 & 1 & 0 \cr
A_4,A_{3,2} & 0 & 0 & 1 & 1 & 0 & 1 \cr
A_5 & 0 & 1 & 0 & 0 & 1 & 1 \cr
A_6 &  1 & 0 & 0 & 1 & 1 & 0}
\]
and the solution $x_1 = \cdots = x_6 = 1/3$, so now our previous assumptions are satisfied: $\Delta \cdot x = 1$, and $\shbl = 2 = d / d_\text{max} = 6/3$.
\exend
\end{exComplicated}

%
%
%
%

%
\section{Conclusions} \label{sec:conclusions}

Our goal has been to extend the theory of communication lower bounds,
and to design algorithms that attain them. Motivated by
a geometric approach first used for the 3 nested loops
common in linear algebra, we extended these bounds to a very general 
class of algorithms, assuming only that that the data accessed 
can be modeled as arrays with subscripts that are linear functions 
of loop indices. This lets us use a recent result in functional analysis 
to bound the maximum amount of data reuse ($F$), which leads to a 
communication lower bound.
The bound is expressed in terms of a linear program that can be written down and solved decidably.
For many programs of practical interest, it it possible 
to design a communication-optimal algorithm that attains the bound. This is true, for example,
when all the array subscripts are just subsets of the loop indices.
This includes, among other examples, linear algebra, direct $N$--body interactions,
database joins,
and tensor contractions.

Part~2 of this paper will both expand the classes of algorithms 
discussed in \sectn{\ref{sec:bound}}
for which the lower bound can be easily computed,
and expand the 
discussion of \sectn{\ref{sec:attain}}
on the attainability of our lower bounds.

Part~2 will also improve the algorithm in \sectn{\ref{sec:computeP}} to improve its efficiency. 
First, we will show that it suffices to check condition 
\eqref{subcriticalhypothesis} for a \emph{subset} $\LL$ 
of subgroups $H$ of $\ZZ^d$, rather than \emph{all} subgroups.
In this discrete extension of \cite[Theorem~1.8]{Valdimarsson07}, 
previously mentioned in Remark~\ref{rmk:V07}, 
$\LL$ is the lattice generated by the kernels of the homomorphisms, 
$\Kernel{\phi_1},\ldots,\Kernel{\phi_j}$.
We will discuss a few practical cases where $\LL$ is finite and the approach simplifies further: this 
includes the case when there are at most $m=3$ arrays 
in the inner loop, as well as a generalization of the 
product case from \sectn{\ref{sec:bound-prod}}.

Then, we discuss a few other cases where $\LL$ may be infinite, 
but we know in advance that we need only check a certain finite subset of subgroups $H$ from $\LL$.
This includes the case when all the arrays are $1$-- or $(d-1)$--dimensional, 
and when there are at most $d=4$ loops in the loop nest.

We then return to the question of attainability, i.e., finding 
an optimal algorithm.
We discuss two inequalities, \eqref{mainconclusion} and 
$|E| \le M^\shbl$, in which equality must be (asymptotically) attainable 
to find an optimal algorithm.
For the former inequality, we show that asymptotic equality is always 
attainable by a family of cubes in a critical subgroup.
For the latter inequality, we give a special case where asymptotic 
equality also holds, while leaving the general case open.
These results suggest how to reorganize code to obtain an optimal algorithm; 
we discuss practical concerns, including constants hidden in the 
asymptotic bounds.
We conclude by discussing how our lower bounds depend on the presence of 
(inter-iteration) data dependencies.

\section*{Acknowledgements}
We would like to thank Bernd Sturmfels, Grey Ballard, Benjamin Lipshitz, and Oded Schwartz for helpful discussions.

We acknowledge funding from Microsoft (award 024263) and Intel (award 024894), and matching funding by UC Discovery (award DIG07-10227), with additional support from ParLab affiliates National Instruments, Nokia, NVIDIA, Oracle, and Samsung, 
and support from MathWorks. 
We also acknowledge the support of the US DOE (grants
DE-SC0003959,          
DE-SC0004938,          
DE-SC0005136,          
DE-SC0008700,          
DE-AC02-05CH11231,     
DE-FC02-06ER25753, and 
DE-FC02-07ER25799),    
DARPA (award HR0011-12-2-0016),  
and NSF (grants
DMS-0901569 and 
DMS-1001550).   
\bibliographystyle{plain}
{\small%
\bibliography{CDKSY13}}
\end{document}